\newcommand\argmin{\mathop{\textrm{argmin}}}
\newcommand\crule[1][5cm]{%
  \par
  \nointerlineskip
  \centerline{\hbox to #1{\hrulefill}}%
  \nointerlineskip}
\numberwithin{equation}{section}
\numberwithin{algorithm}{section}
\newtheorem{theorem}{{\sc Theorem}}[section]
\newtheorem{lemma}{{\sc Lemma}}[section]
\newtheorem{corollary}[theorem]{Corollary}
\newtheorem{remark}{Remark}[section]
\newtheorem{assumption}{Assumption}[section]
\newtheorem{proposition}{Proposition}[section]
\newtheorem{definition}{{\sc Definition}}[section]
\newtheorem{example}{{Example}}[section]
\newcommand{\R}{\mathbb{R}}
\newcommand{\be}{\begin{equation}}
\newcommand{\ee}{\end{equation}}
\newcommand{\bee}{\begin{equation*}}
\newcommand{\eee}{\end{equation*}}
\newcommand{\bea}{\begin{eqnarray}}
\newcommand{\eea}{\end{eqnarray}}
\newcommand{\beaa}{\begin{eqnarray*}}
\newcommand{\eeaa}{\end{eqnarray*}}
\title{
	\textbf{Nonconvex Nonsmooth Multicomposite Optimization and Its Applications to Recurrent Neural Networks}}
\author{Lingzi Jin\thanks{{ling-zi.jin@connect.polyu.hk}, Department of Applied Mathematics, Hong Kong Polytechnic University, Kowloon, Hong Kong.  }
        \and
        Xiao Wang\thanks{{wangx936@mail.sysu.edu.cn}, School of Computer Science and Engineering, Sun Yat-sen University, Guangzhou, China. This author is partially supported by National Natural Science Foundation of China (No 12271278).}
       	\and
       	Xiaojun Chen\thanks{{maxjchen@polyu.edu.hk}, Department of Applied Mathematics, Hong Kong Polytechnic University, Kowloon, Hong Kong. This author is partially supported by Hong Kong Research Grant Council PolyU15300123, JLFS/P-501/24, CAS Joint Laboratory of Applied Mathematics.}
        }
\date{}
\begin{document}
	

\maketitle

\begin{abstract}
We consider a class of nonconvex nonsmooth multicomposite optimization problems where the objective function consists of a Tikhonov regularizer and a composition of multiple nonconvex nonsmooth component functions.
Such optimization problems arise from tangible applications in machine learning and beyond.
To define and compute its first-order and second-order d(irectional)-stationary points effectively, we first derive the closed-form expression of the tangent cone for the feasible region of its constrained reformulation.
Building on this, we establish its equivalence with the corresponding constrained and $\ell_1$-penalty reformulations in terms of global optimality and d-stationarity.
The equivalence offers indirect methods to attain the first-order and second-order d-stationary points of the original problem in certain cases.
We apply our results to the training process of recurrent neural networks (RNNs).
\vspace{0.8cm}

\noindent {\bf Keywords:} {Multicomposite optimization,  tangent cone, first-order d-stationarity, second-order d-stationarity, recurrent neural network}

\vspace{0.5cm}

\noindent {\bf MSC codes: 49J52, 90B10, 90C26, 90C30}  

\end{abstract}

\section{Introduction}\label{sec1}
In this paper, we consider the following unconstrained nonconvex nonsmooth optimization problem
\begin{align}
	\label{eq:1.1}
	\tag{P}
	\min_{\theta \in \R^{n} }~ \Psi(\theta) + \lambda \|\theta\|^2 ,
\end{align}
where $\lambda>0$, $\|\cdot\|$ is the Euclidean norm, and the mapping $ \Psi: \R^{n} \rightarrow \R_{+} $ is defined by
\begin{align*}
	& \Psi(\theta) := g(u_{1}, \cdots, u_{L})  \\
	& \text{with }  u_{1}:=\psi_{0}(\theta) \text{ and }
	u_\ell := \psi_{\ell-1}(\theta, u_1, \cdots, u_{\ell-1}), \;\;\,\ell = 2,\cdots, L,
\end{align*}
for $L+1$ continuous but possibly nonconvex nonsmooth component functions
$$\psi_{\ell-1}: \R^{n+ \bar{N}_{\ell-1} } \rightarrow \R^{N_{\ell}}, \quad  \ell =1,\dots,L, \quad {\rm and} \quad g : \R^{\bar{N}_{L}} \rightarrow \R_{+}$$
with $ \bar{N}_{0}:=0 $ and $ \bar{N}_{\ell}:= \sum_{j=1}^{\ell} N_{j} $ for all $ \ell = 1,\dots,L $.
Problem \eqref{eq:1.1} covers a wide range of applications in machine learning where $\theta$ refers to the network parameter, $\Psi$ is the loss function and $\lambda \|\theta\|^2$ is the regularizer to guarantee the boundedness of the solution set \cite{LRP-LLC} and alleviate the overfitting \cite{WZC2024} for \eqref{eq:1.1}.

In \cite{DNN-CHP}, Cui et al. present a novel deterministic algorithmic framework that enables the computation of a d-stationary point of the empirical deep neural network training problem formulated as a multicomposite optimization problem.
The model \eqref{eq:1.1} differs from the model (2.1)-(2.2) of \cite{DNN-CHP} in two aspects.
The first difference is that we unify parameters $ \{\theta_{0},\dots,\theta_{L-1}\} $ (corresponding to $ \{ z_{1},\dots,z_{L} \} $ in \cite{DNN-CHP}) as $\theta$ since the process of selecting $ \theta_{\ell-1} $ from $ \theta $
can be achieved by $\psi_{\ell-1}$, which facilitates the sharing of parameters across layers simultaneously.
Secondly, we explicitly articulate the information transmission across multiple layers (i.e., dependence of $\psi_{\ell-1}$ on $ u_{1},\dots,u_{\ell-2} $), which is widely used in network structure, such as RNN  \cite{Graves2012SupervisedSL} and shortcut in Resnet. In \cite{DNN-CHP} it assumes that $ g $ only depends on $ u_{L} $ and $\psi_{\ell-1}$ only depends on $ (\theta,  u_{\ell-1}) $.
Although $ \Psi $ in \eqref{eq:1.1} can be reorganized into $
\Psi(\theta)= \bar{g}  ( \cdot ) \circ \bar{\psi}_{L-1} ( \theta,  \cdot ) \circ \cdots \circ \bar{\psi}_{0} ( \theta) $ with some functions $ \bar{g}, \{ \bar{\psi}_{\ell} \} $ coinciding the formulation (2.1)-(2.2) in \cite{DNN-CHP} if neglecting the first difference, it can be found that the required number of auxiliary variables under this decomposition is much larger than $ \bar{N}_{L} $.
We illustrate the differences by an example of RNNs in Remark \ref{remark-[8]} with Figure 1.
Thus, model \eqref{eq:1.1} encompasses the formulation (2.1)-(2.2) presented in \cite{DNN-CHP}.

Directly solving \eqref{eq:1.1} by SGD-type methods (SGDs) is common in computer science. However, the automatic differentiation (AD), the key of SGDs, based on chain rules fails for the subdifferential of $ \Psi $ at a nondifferentiable point $\theta$ \cite{BP2021}.
To the best of our knowledge, existing algorithms that directly solve unconstrained  nonconvex nonsmooth problem \eqref{eq:1.1} with rigorous convergence analysis can be roughly separated into two groups. One combines (S)GDs with smoothing techniques aiming at (approximate) Clarke stationary points  \cite{chen2012smoothing,LZJ2022,ZLJSJ2020}.
Another approach constructs advanced AD algorithms based on chain rules for some generalized subdifferentials.
The latter further branches into two distinct paths.
Along the first path Nesterov \cite{Nesterov2005} utilizes the chain rule of directional derivatives to define lexicographic differentiation and evaluate lexicographic subdifferential \cite{BKSW2018,KB2015}.
However, the nice properties of lexicographic subdifferential \cite{KB2015,Nesterov2005} seem to be mostly applied in sensitivity analysis and have not helped to develop an algorithm converging to a stationary point defined by lexicographic subdifferential or a d-stationary point. Moreover, it is mentioned in \cite{BLPS2021} that the AD method based on lexicographic differentiation is incompatible with existing AD frameworks.
Therefore, Bolte and Pauwels \cite{BP2021} follow a path of conservative field, which is a generalization of Clarke subdifferential.
Further study establishes convergence of SGDs in the sense of conservative field stationarity, which can be improved to Clarke stationarity under certain conditions. More references can be referred to \cite{XHLT2024}.
From the existing literature, directly solving \eqref{eq:1.1} may only be able to find a  Clarke stationary point if there is no special structure, such as
weak convexity \cite{LMZ2023} and weak concavity \cite{ALT2025}.

However, in general, Clarke stationarity may be an overly lenient condition in contrast with d-stationarity \cite{nMM-CPS}.
On the other hand, the d-stationary points of multicomposite optimization \eqref{eq:1.1} are too complicated to calculate directly (Proposition \ref{dd_F1}). Therefore, a more practical approach is to reformulate \eqref{eq:1.1} to derive a model with easily computable d-stationary points, while establishing their relationship in terms of d-stationarity.
In \cite[Section 9.4.2]{CP-book}, the equivalence between \eqref{eq:1.1} with $ L=1 $ and its $\ell_{1}$-penalty form in d-stationarity is established under the premise of feasibility.
In \cite{DNN-CHP}, a one-sided relation is obtained for simplified \eqref{eq:1.1} with $ L>1 $ and its $\ell_{1}$-penalty form, which provides the algorithm for calculating d-stationary points of DNN training problem.
More references that establish and utilize the relationship between the simplified \eqref{eq:1.1} and its different reformulations in other kinds of stationarity can be referred to \cite{LReLU-LLC,WB2023,WZC2024}.

Apart from the above first-order optimality conditions, the second-order optimality conditions for nonsmooth optimization problems have attracted widespread interest since the 1970s \cite[Chapter 13]{rockafellar2009variational}.
To avoid the concept of second-order tangent cone, Cui et al. \cite{cui2018studyv2} use a kind of second-order subderivative \cite[13(7)]{rockafellar2009variational} to establish second-order conditions for minimizing twice semidifferentiable and locally Lipschitz continuous functions with polyhedral constraints \cite[Proposition 2.3]{cui2018studyv2}, and apply the results on piecewise linear-quadratic programs.
Jiang and Chen \cite[Lemma 3.8]{min-maxR2} further extend the second-order necessary condition to convexly constrained optimization problems with twice semidifferentiable objective functions, and apply the results on minimax problems by using generalized directional derivatives and subderivatives.
For \eqref{eq:1.1} with $ L=1 $ and twice semidifferentiable component functions, \cite[Proposition 9.4.2]{CP-book} offers second-order conditions by the relation between the original problem and its $ \ell_{1} $-penalty reformulation, and the structure of the reformulation. However, the aforementioned second-order conditions are inapplicable to \eqref{eq:1.1} with $L>1$ even when $ g $ and $ \{\psi_{\ell} \} $ are all twice semidifferentiable, since the composition of such functions may not retain this property.
More references that establish second-order optimality conditions by other generalized Hessians and generalized second-order directional derivatives can be referred to commentary at the end of \cite[Chapter 13]{rockafellar2009variational}.

\subsection{Model reformulation}
Motivated by \cite{CpW2014,DNN-CHP}, we reformulate \eqref{eq:1.1} as a constrained optimization problem.
First we introduce auxiliary variables
\begin{equation}
	\label{def:1.1}
	\begin{aligned}
		&  \mathbf{u}_{\ell}:= (u_{1}^{\top}, \dots, u_{\ell}^{\top})^{\top} \in \R^{\bar{N}_{\ell}}, \, \ell = 1,\dots,L,
		\text{ and an empty placeholder } \mathbf{u}_{0}  
	\end{aligned}
\end{equation}
to decompose the nested structure of $ \Psi $, obtaining the constrained form
\begin{equation}
	\label{eq}
	\tag{P0}
	\begin{aligned}
		\min_{ z } ~& F(z):= g(u) +  \lambda \|\theta \|^{2}  , ~~
		\text{subject to }~
		u_{\ell}= \psi_{\ell-1} ( \theta, \mathbf{u}_{\ell-1} ) , \ell = 1, \dots , L,
	\end{aligned}
\end{equation}
where for brevity we denote $u := \mathbf{u}_{L} \in \R^{\bar{N}_{L}} $,
\begin{align}
	z:=(\theta^{\top}  ,u^{\top} )^{\top}\in \R^{\bar{N}} ,\, \text{and }
	\bar{N}:= n + \bar{N}_{L}.  \label{def:z}
\end{align}

The nonconvex nonsmooth objective function and the nonsmooth equality constraints in \eqref{eq} pose significant challenges for both theoretical analysis and numerical tractability.
Therefore, \eqref{eq} will only be used as an intermediary.
Denote $ [L]:=\{1,\dots,L\} $.
As the final reformulation, the $\ell_{1} $-penalty form of \eqref{eq} with positive penalty parameters $ \{ \beta_{\ell}, \ell \in [L] \} $ is defined as:
\begin{align}
	\label{l1pen}
	\tag{P1}
	\min_{z}~ \Theta(z):= F(z) + \sum_{\ell=1}^{L} \beta_{\ell} \| u_{\ell} - \psi_{\ell-1} ( \theta, \mathbf{u}_{\ell-1} )  \|_{1}.
\end{align}
We will analyze the properties of \eqref{eq:1.1}, \eqref{eq} and \eqref{l1pen} and establish the relationship between them, which makes it realistic to attain second-order stationary points of \eqref{eq:1.1}.

\subsection{Contribution}\label{sec1.1}
The contributions of this paper lie in threefold.

Firstly, we obtain a full characterization of the tangent cone of the feasible region of \eqref{eq} under directional differentiability and local Lipschitz continuity of $g$ and $ \{ \psi_{\ell-1},\ell\in[L] \} $ in Theorem \ref{th:T-express-2}.
In general, it is challenging to express the tangent cone of a nonconvex feasible region \cite[p525 and Remark 9.2.1]{CP-book}.
For the nonconvex feasible region constructed by nonsmooth equality constraints in \eqref{eq}, it can be verified that NNAMCQ (no nonzero abnormal multiplier constraint qualification) \cite[Remark 2]{Y2000NNAMCQ} holds using the method similar to \cite[Lemma 6]{LReLU-LLC}.
Based on that, a subset of its tangent cone can be expressed by a superset of its normal cone \cite[Corollary 10.50]{rockafellar2009variational} using the relations between tangent and normal cones \cite[Theorems 6.26 and 6.28]{rockafellar2009variational}.
However, the closed-form of its tangent cone is still difficult to obtain solely through constraint qualifications (CQs).
In contrast, we provide a closed-form expression of the tangent cone of the feasible region of \eqref{eq} by directly utilizing the pull-out structure of constraints.

Secondly, we show the equivalence between  \eqref{eq:1.1}, \eqref{eq} and \eqref{l1pen} regarding d-stationary points and global minimizers, which generalizes the results in \cite{DNN-CHP} and Chapter 9 of \cite{CP-book}.
As a consequence of the equivalence between \eqref{eq:1.1} and \eqref{l1pen}, the penalty form \eqref{l1pen} with according algorithms \cite{nMM-CPS,Y141} offers an alternative way to solve the original problem \eqref{eq:1.1}.
Furthermore, we derive a unified second-order necessary condition for nonconvex nonsmooth constrained minimization with twice directionally differentiable objective functions, which extends the results in \cite{cui2018studyv2,min-maxR2}.
Together with the equivalence between \eqref{eq:1.1}, \eqref{eq} and \eqref{l1pen}, the second-order optimality conditions for \eqref{eq} and \eqref{l1pen} provide second-order necessary and sufficient criteria for \eqref{eq:1.1}, which cover the ones proposed in \cite[Proposition 9.4.2]{CP-book}. 

Thirdly, we apply our theoretical results to the minimization problem for training an Elman RNN with a single unidirectional hidden layer.
The equivalence in d-stationarity of \eqref{form2-eq} and \eqref{form2-l1pen} not only generalizes the result from Theorem 2.1 of \cite{DNN-CHP}, but also provides the explicit thresholds for penalty parameters.
Moreover, we observe that every d-stationary point of \eqref{form2-eq} is also a second-order d-stationary point for \eqref{form2-eq} and the same result holds for \eqref{form2-l1pen} under certain conditions, which makes their second-order d-stationary points computable by the methods for DC programs \cite{nMM-CPS}.

\subsection{Organization}\label{sec1.2}
The rest of this paper is organized as follows.
In Section \ref{sec2}, we introduce some basic definitions and preliminary properties of \eqref{eq:1.1}, \eqref{eq} and \eqref{l1pen}. 
The d-stationarity of \eqref{eq:1.1}, \eqref{eq}, \eqref{l1pen} and the second-order d-stationarity of \eqref{eq}, \eqref{l1pen} are defined in Section \ref{sec3}.
Based on the closed-form expression of the tangent cone of the feasible region of \eqref{eq} in subsection \ref{sec3.1},
we establish the equivalence between \eqref{eq:1.1}, \eqref{eq} and \eqref{l1pen} in terms of global optimality and d-stationarity under certain conditions in subsection \ref{sec3.2}. And subsection \ref{sec3.3} shows that second-order d-stationarity of \eqref{eq} and \eqref{l1pen} provides second-order necessary conditions for \eqref{eq:1.1}.
Subsection \ref{sec3.4} offers second-order sufficient conditions for strong local minimizers of \eqref{eq:1.1} through \eqref{l1pen}. 
In Section \ref{sec4}, we apply the general theoretical results to RNNs.
Concluding remarks are given in Section \ref{sec5}.

\subsection{Notation}\label{sec1.3}
In the following, we denote the set of integers and nonnegative (positive) integers as $ \mathbb{Z} $ and $ \mathbb{Z}_{+} $ ($ \mathbb{Z}_{++} $) respectively.
For any $ m \in \mathbb{Z}_{++} $, we denote $ [m]:=\{ 1,\dots,m  \} $.
The accumulative multiplication is presented by $\prod$.
For any sequence $ \{ a_{j} \geq 0, j\in \mathbb{Z}_{+}  \} $ and any $ j_{1},j_{2} \in \mathbb{Z}_{+}  $ with $ j_{1}>j_{2} $, denote $ \sum_{j=j_{1}}^{j_{2}} a_{j} :=0  $ and $ \prod_{j=j_{1}}^{j_{2}} a_{j} :=1  $.
For any vector sequence $ \{ u_{j}  , j\in \mathbb{Z}_{+}  \} $ and any $ j_{1},j_{2} \in \mathbb{Z}_{+}  $ with $ j_{1}>j_{2} $, denote $ (u_{j_{1}}, \dots, u_{j_{2}}  ) $ as an empty placeholder.
For any vector $a$ and positive integer $i$, $ [a]_{i} $ refers to the $i$th component of $a$.
For any two sets $A,B \subseteq \R^{m} $, denote $ A+B= \{ a+b \mid a\in A , b \in B \} $.
Denote $\mathbb{B}(\mathbf{0};1):= \{ z\in \R^{\bar{N}} \mid \|z\| \leq 1 \} $.
For any set $ \mathcal{F} \subseteq \R^{m} $, the indicator function is defined as
$\delta_{\mathcal{F}}(x)= 0 $, if $ x\in \mathcal{F} $, and $ + \infty $, otherwise.
For any $ m \in \mathbb{Z}_{++}, \gamma \in \R $ and any function $ f:\R^{m} \rightarrow \R\cup \{ + \infty \} $, the level set is defined as $ lev_{\leq \gamma} f:=\{ x \in \R^{m} \mid f(x) \leq \gamma \} $.

Denote the optimal solution sets of \eqref{eq:1.1}, \eqref{eq} and \eqref{l1pen} by  $$\mathcal{S} :=   \argmin_{\theta \in \R^{n} }
~  [ \Psi(\theta) + \lambda\|\theta\|^2 ],
\,\,\,\,\,
\mathcal{S}_{0}:=  \argmin_{z \in  \mathcal{F}_{0}} F(z), \, \,\,\,\,
\mathcal{S}_{1}:=   \argmin_{z \in \R^{\bar{N}} }
~  \Theta( z),$$
respectively,
where
\begin{align}\label{def:set-F0}
	\mathcal{F}_{0} :=  \lbrace z \in \R^{\bar{N}} \mid
	u_{\ell}=\psi_{\ell-1}( \theta,  \mathbf{u}_{\ell-1} ), \ell \in [L]
	\rbrace .
\end{align}

\section{Preliminaries}\label{sec2}
In this section, we present some preliminaries that are used in subsequent sections. 
Let
\begin{equation}\label{def:gammabar}
	\begin{aligned}
		z^{0}&:=( \mathbf{0}^{\top} , (u_{1}^{0})^{\top}, \dots, (u_{L}^{0})^{\top} )^{\top}  \text{ with }  u_{\ell}^{0}:= \psi_{\ell-1}(\mathbf{0}, u_{1}^{0}, \dots, u_{\ell-1}^{0}),\, \ell=1,\dots,L, \\
		\bar{\gamma}&:= \Theta ( z^{0} ).
	\end{aligned}
\end{equation}
Then we have $ z^{0} \in \mathcal{F}_{0}  \neq \emptyset  $ and $ \bar{\gamma}=F(z^{0}) $. 
Next, we prove that $ \mathcal{S}, \mathcal{S}_{0} $ and $ \mathcal{S}_{1} $  are nonempty and compact under the continuity of $ g: \R^{\bar{N}_L} \rightarrow \R_{+} $ and $ \{ \psi_{\ell-1}: \R^{n+\bar{N}_{\ell-1}} \rightarrow \R^{N_{\ell}} ,\ell\in[L] \} $.
Noting the nonnegativity of $g$, we can obtain the following result by the level-boundedness \cite[Theorem 1.9]{rockafellar2009variational} of $  ( \Psi(\cdot) + \lambda\|\cdot\|^2   ) $ and $ (F+\delta_{\mathcal{F}_{0}}) $.
\begin{lemma}\label{lem:bd-opt-0}
	The optimal solution sets 	$ \mathcal{S} $ and  $ \mathcal{S}_{0} $ are nonempty and compact.
\end{lemma}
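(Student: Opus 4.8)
The plan is to invoke the Weierstrass theorem in its standard ``continuous objective with compact sublevel sets'' form, treating $\mathcal{S}$ directly and then reducing the claim for $\mathcal{S}_0$ to the one for $\mathcal{S}$ through the graph structure of the feasible set $\mathcal{F}_0$ in \eqref{def:set-F0}. Throughout I would use that $g$ and all $\psi_{\ell-1}$ are continuous, so that the objective $\Psi(\theta)+\lambda\|\theta\|^2$ of \eqref{eq:1.1} and the objective $F$ of \eqref{eq} are continuous, and that $g\geq 0$, so both objectives are bounded below by $\lambda\|\theta\|^2\geq 0$.

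For $\mathcal{S}$, I would first note that $\Psi$ is continuous as a finite composition of the continuous maps $g,\psi_0,\dots,\psi_{L-1}$, since the intermediate variables $u_\ell$ are continuous functions of $\theta$ by an easy induction on $\ell$. As $\Psi\geq 0$, the objective $h(\theta):=\Psi(\theta)+\lambda\|\theta\|^2$ satisfies $h(\theta)\geq\lambda\|\theta\|^2\to\infty$ as $\|\theta\|\to\infty$, i.e.\ $h$ is coercive. With the constant $\bar\gamma$ of \eqref{def:gammabar}, the sublevel set $lev_{\leq\bar\gamma}h$ is nonempty (it contains $\theta=\mathbf{0}$, because $h(\mathbf{0})=\Psi(\mathbf{0})=\bar\gamma$), closed (by continuity), and bounded (by coercivity), hence compact. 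Weierstrass then yields a global minimizer, so $\mathcal{S}\neq\emptyset$; moreover $\mathcal{S}=h^{-1}(\min h)$ is closed and $\mathcal{S}\subseteq lev_{\leq\bar\gamma}h$ is bounded, so $\mathcal{S}$ is compact.

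For $\mathcal{S}_0$, the key observation is that the constraints of \eqref{eq} determine $u$ recursively from $\theta$, so $\mathcal{F}_0$ is exactly the graph $\{(\theta^{\top},u(\theta)^{\top})^{\top}:\theta\in\R^n\}$ of the continuous map $\theta\mapsto u(\theta)=(u_1(\theta),\dots,u_L(\theta))$ built in the previous step. Hence $z(\theta):=(\theta^{\top},u(\theta)^{\top})^{\top}$ is a continuous bijection from $\R^n$ onto $\mathcal{F}_0$ whose inverse is the projection $z\mapsto\theta$, and it satisfies $F(z(\theta))=g(u(\theta))+\lambda\|\theta\|^2=h(\theta)$. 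This identity transfers everything: $\min_{\mathcal{F}_0}F=\min h$ and $\mathcal{S}_0=z(\mathcal{S})$, so $\mathcal{S}_0$ is nonempty and, being the continuous image of the compact set $\mathcal{S}$, compact. Equivalently, one could argue directly that $\mathcal{F}_0$ is closed and that $F$ is coercive relative to $\mathcal{F}_0$, then apply Weierstrass as above.

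The one subtlety, and the step I would be most careful about, is coercivity for \eqref{eq}: $F$ is \emph{not} coercive as a function on all of $\R^{\bar N}$, since a large $u$ is not penalized by $F$. The resolution is precisely that on the feasible set $\mathcal{F}_0$ the variable $u$ is pinned to $\theta$ through the continuous recursion, so that a bounded $\theta$ forces a bounded $u$; this is what makes the sublevel sets of $F$ relative to $\mathcal{F}_0$ bounded and lets the homeomorphism $z(\cdot)$ carry the compactness of $\mathcal{S}$ over to $\mathcal{S}_0$.
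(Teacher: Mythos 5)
Your argument is correct. For $\mathcal{S}$ you follow essentially the same route as the paper: continuity plus coercivity of $\Psi(\cdot)+\lambda\|\cdot\|^2$ (the paper phrases this as level-boundedness and cites \cite[Theorem 1.9]{rockafellar2009variational}). For $\mathcal{S}_0$, however, you take a genuinely different path. The paper argues directly on the constrained problem: it checks that $\mathcal{F}_0$ is nonempty and closed, and then proves boundedness of $lev_{\leq\gamma}(F+\delta_{\mathcal{F}_0})$ by first bounding $\theta$ via $\lambda\|\theta\|^2\leq\gamma$ and then bounding $u_1,\dots,u_L$ inductively through the constraint recursion, before invoking \cite[Example 1.11]{rockafellar2009variational}. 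You instead observe that $\mathcal{F}_0$ is the graph of the continuous map $\theta\mapsto u(\theta)$, so that $z(\cdot)$ is a homeomorphism of $\R^n$ onto $\mathcal{F}_0$ with $F\circ z=h$, and conclude $\mathcal{S}_0=z(\mathcal{S})$ is the continuous image of a compact set. Both arguments hinge on the same structural fact -- the constraints pin $u$ to $\theta$ -- but yours buys more: it establishes in one stroke the bijection between $\mathcal{S}$ and $\mathcal{S}_0$ that the paper only states separately in the paragraph following this lemma, and it avoids re-running a Weierstrass argument on the lifted space. The paper's direct level-boundedness computation, on the other hand, is the template that is reused almost verbatim for $\mathcal{S}_1$ in Lemma \ref{lem:bd-opt-1}, where no graph structure is available because feasibility is only penalized; your homeomorphism argument would not transfer there. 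Your closing remark correctly identifies the one subtlety (non-coercivity of $F$ on all of $\R^{\bar N}$) and resolves it properly.
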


In fact, it can be naturally obtained that \eqref{eq:1.1} is equivalent to \eqref{eq} in global optimality. If $ \bar{\theta} \in \mathcal{S} $, then $ \bar{z}:= ( \bar{\theta}^{\top}, \bar{u}_{1}^{\top}, \dots , \bar{u}_{L}^{\top} )^{\top} \in \mathcal{S}_{0} $ where $\bar{u}_{\ell}:= \psi_{\ell-1} ( \bar{\theta} ,  \bar{u}_{1}, \dots , \bar{u}_{\ell-1} ) $ for all $ \ell\in[L]$; conversely, if $ \bar{z}:= ( \bar{\theta}^{\top}, \bar{u}_{1}^{\top}, \dots , \bar{u}_{L}^{\top} )^{\top} \in \mathcal{S}_{0} $, then $  \bar{\theta} \in \mathcal{S} $.

\begin{lemma} \label{lem:bd-opt-1}
	The optimal solution set 	$ \mathcal{S}_{1} $ is nonempty and compact.
\end{lemma}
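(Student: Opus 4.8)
The plan is to invoke \cite[Theorem 1.9]{rockafellar2009variational} once more, exactly as in the proof of Lemma \ref{lem:bd-opt-0}, by verifying that the penalized objective $\Theta$ is proper, lower semicontinuous, and level-bounded. Properness and continuity (hence lower semicontinuity) are immediate: $\Theta$ is finite-valued on all of $\R^{\bar{N}}$ and is a finite sum of compositions of the continuous functions $g$, $\{\psi_{\ell-1},\ell\in[L]\}$, the squared Euclidean norm, and the $\ell_1$-norm. It therefore remains only to establish that $lev_{\leq\gamma}\Theta$ is bounded for every $\gamma\in\R_+$, which is the heart of the matter.

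Here the argument must differ from that of Lemma \ref{lem:bd-opt-0}: in \eqref{l1pen} the equality constraints $u_\ell=\psi_{\ell-1}(\theta,\mathbf{u}_{\ell-1})$ are only penalized rather than enforced, so I cannot substitute them directly. Instead I would exploit that every summand of $\Theta$ is nonnegative, because $g\geq 0$, $\lambda>0$, and each $\beta_\ell>0$. Fix $z=(\theta^{\top},u^{\top})^{\top}\in lev_{\leq\gamma}\Theta$. Dropping all but the regularizer gives $\lambda\|\theta\|^2\leq\Theta(z)\leq\gamma$, so $\theta$ lies in a bounded set; dropping all but the $\ell$th penalty term gives $\beta_\ell\|u_\ell-\psi_{\ell-1}(\theta,\mathbf{u}_{\ell-1})\|_1\leq\gamma$ for each $\ell\in[L]$.

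From here I would bound the blocks $u_1,\dots,u_L$ inductively in this order. For $u_1$, the estimate $\|u_1-\psi_0(\theta)\|_1\leq\gamma/\beta_1$ combined with the boundedness of $\theta$ and the continuity of $\psi_0$ (so that $\psi_0(\theta)$ ranges over a bounded set) yields, via the triangle inequality, that $u_1$ is bounded. Assuming inductively that $\theta,u_1,\dots,u_{\ell-1}$ are all bounded, continuity of $\psi_{\ell-1}$ confines $\psi_{\ell-1}(\theta,\mathbf{u}_{\ell-1})$ to a bounded set, whence $\|u_\ell-\psi_{\ell-1}(\theta,\mathbf{u}_{\ell-1})\|_1\leq\gamma/\beta_\ell$ and the triangle inequality force boundedness of $u_\ell$. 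After $L$ steps all of $\theta,u_1,\dots,u_L$, and hence $z$, are confined to a bounded set, establishing level-boundedness and with it the nonemptiness and compactness of $\mathcal{S}_1$. The main obstacle is precisely this soft-constraint version of the level-boundedness estimate: since the feasibility residuals are penalized rather than set to zero, the sequential propagation of boundedness hinges on the penalty terms being uniformly controlled on a sublevel set together with the continuity of each $\psi_{\ell-1}$, and it is positivity of every $\beta_\ell$ that makes each residual bound available.
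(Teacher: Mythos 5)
Your proposal is correct and follows essentially the same route as the paper's proof: both invoke \cite[Theorem 1.9]{rockafellar2009variational} and establish level-boundedness by first bounding $\theta$ through the regularizer, extracting the residual bounds $\|u_\ell-\psi_{\ell-1}(\theta,\mathbf{u}_{\ell-1})\|\leq\gamma/\beta_\ell$ from the nonnegativity of all summands, and then bounding $u_1,\dots,u_L$ inductively via the triangle inequality and the fact that each continuous $\psi_{\ell-1}$ maps bounded sets to bounded sets. No meaningful differences.
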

\begin{proof}
	Since $ \Theta $ is proper and continuous, we only need to show its level boundedness \cite[Theorem 1.9]{rockafellar2009variational}. 
	For any $ \gamma \in \R_+ $ and any $ z \in lev_{\leq \gamma}  \Theta  $, it follows from $ g(\cdot)\geq 0 $ and $ \|\cdot\| \leq \|\cdot\|_{1} $ that
	\begin{align}
		& \| \theta  \|  \leq \sqrt{{\gamma}/{\lambda}}, \label{bd1-1} \\
		&   \| u_{\ell} -  \psi_{\ell-1}(\theta, u_{1}, \dots, u_{\ell-1})  \|  \leq  {\gamma}{ \beta_{\ell}^{-1} } , \, \forall \ell\in [L]  .  \label{bd4-1}
	\end{align}
	Next, we will finish the proof in an inductive manner.
	For $\ell=1$, it follows from \eqref{bd1-1}-\eqref{bd4-1} and the continuity of $ \psi_{0} $ on $ \R^{n} $ that
	\begin{align*}
		\|u_{1}\|
		\leq  \| u_{1} -  \psi_{0}(\theta )  \| + \| \psi_{0}(\theta )  \|
		\leq  {\gamma}{ \beta_{1}^{-1} } + \max \{ \psi_{0}(\theta ) \mid \| \theta  \|  \leq \sqrt{ {\gamma}{\lambda^{-1}}} \}
		< + \infty.
	\end{align*}
	For any $ \ell=2,\dots,L $, assume that $ u_{1},\dots,u_{\ell-1} $ are bounded. Then it follows from \eqref{bd1-1} and \eqref{bd4-1} that
	\begin{align*}
		\|u_{\ell}\|
		&\leq  \| u_{\ell} -  \psi_{\ell-1}(\theta ,u_{1},\dots,u_{\ell-1})  \| + \| \psi_{\ell-1}(\theta ,u_{1},\dots,u_{\ell-1})  \| \\
		&\leq
		{\gamma}{ \beta_{\ell}^{-1} } + \| \psi_{\ell-1}(\theta ,u_{1},\dots,u_{\ell-1})  \| 
		< + \infty.
	\end{align*}
	Hence, $u$ is bounded by induction. Together with \eqref{bd1-1} and arbitrariness of $z$, it implies the boundedness of $ lev_{\leq \gamma } \Theta $. 
\end{proof}

For the main analysis we need the following concepts of directional differentiability and local Lipschitz continuity.
\begin{definition}[(twice) directional differentiability, Definition 1.1.3 and (4.10) of \cite{CP-book}]  \label{def:1-dd}
	Given an open subset $ \mathcal{O} $ of $ \R^{n} $ and a scalar-valued function $ f : \mathcal{O}  \rightarrow \R $. The directional derivative of $ f $ at a point $ x\in    \mathcal{O}   $ along a direction $ d\in \R^{n} $ is defined as
	\begin{align}
		f^{\prime}(x;d):=\lim_{ \tau \downarrow 0}  \frac{f(x+\tau d ) - f(x) }{\tau}, \label{eq:1-dd}
	\end{align} if the limit exists.
	The function $ f $ is directionally differentiable at $ x $, if the limit \eqref{eq:1-dd} exists for all $ d \in \R^{n}$.
	The second-order directional derivative of $ f $ at a point $ x\in    \mathcal{O}   $ along a direction $ d\in \R^{n} $ is defined as
	\begin{align}
		f^{(2)} ( x; d ) := \lim_{ \tau \downarrow 0}  \frac{f(x+\tau d ) - f(x) - \tau f^{\prime}(x;d)  }{\tau^{2} /2 }, \label{eq:2-dd}
	\end{align}
	if the limit and the one for \eqref{eq:1-dd} exist. The function $ f $ is twice directionally differentiable at $ x $, if the limits \eqref{eq:1-dd} and \eqref{eq:2-dd} exist for all $ d \in \R^{n}$.
	
	For a vector-valued function $ f: \mathcal{O} \rightarrow \R^{m} $ with component functions $ \{ f_{i}: \mathcal{O} \rightarrow \R, i\in [m] \} $, the directional derivative $ f^{\prime}( x;d ) $ is defined as
	$$ f^{\prime}( x;d ):=  ( f_{1}^{\prime}( x;d ) , \dots, f_{m}^{\prime}( x;d ) )^{\top}, $$
	if $ f_{i}^{\prime}( x;d ) , i\in [m]$ exist.
	Furthermore, if $ f_{i}^{(2)}( x;d ) , i\in [m]$ exist, then the second-order directional derivative  $ f^{(2)}( x;d ) $ is defined as
	$$ f^{(2)}( x;d ):=  ( f_{1}^{(2)}( x;d ) , \dots, f_{m}^{(2)}( x;d ) )^{\top} . $$
	Function $f$ is (twice) directionally differentiable at $x$, if all of its component functions are (twice) directionally differentiable at $x$.
\end{definition}

\begin{definition}[local Lipschitz continuity]
	\label{def:lLc}
	For any function $ f : \mathcal{O} \subseteq \R^{n} \rightarrow \R^{m} $, we say $f$ is locally Lipschitz continuous near $ x \in \mathcal{O} $, if there exists a neighborhood $ X  $ of $x$ and $ K\geq 0 $ such that $ \| f(x_{1})-f(x_{2}) \| \leq K \|x_{1} - x_{2} \| $ for all $ x_{1} ,x_{2} \in X $.
	And we say $f$ is locally Lipschitz continuous, if $f$ is locally Lipschitz continuous near every point in its domain $ \mathcal{O} $.
\end{definition}

If $f$ is directionally differentiable at $x$ and locally Lipschitz continuous near $x$ with modulus $K \geq 0  $, then it follows from Definitions \ref{def:1-dd} and \ref{def:lLc} that for all $ d \in \R^{n} $,
\begin{align}
	\label{eq:mid2-0}
	\| f^{\prime}(x;d) \|
	= \left\|\lim_{\tau \downarrow 0 }  \frac{ f(x+\tau d) - f(x) }{ \tau } \right\|
	\leq \lim_{\tau \downarrow 0 } \frac{ K\tau \|d\| }{ \tau }
	= K \|d\|
	< \infty.
\end{align}
Hence, for all $ d , \bar{d} \in \R^{n} $, $\| f^{\prime}(x;d) - f^{\prime}(x; \bar{d}) \|$ is well-defined and we can similarly obtain that
\begin{align}
	\label{eq:mid2-1}
	\| f^{\prime}(x;d) - f^{\prime}(x; \bar{d}) \| \leq K \|d - \bar{d} \| , \,
	\forall d, \bar{d} \in \R^{n}.
\end{align}

The remaining analysis will be based on the following assumptions about the directional differentiability and local Lipschitz continuity.
\begin{assumption}\label{as1}
	Functions $g $ and $\{ \psi_{\ell-1} , \ell \in [L] \} $ are directionally differentiable on $ \R^{\bar{N}_L} $ and $ \R^{ n +  \bar{ N}_{\ell-1} },\ell \in [L] $ respectively.
	Functions $g $ and $\{ \psi_{\ell-1} , \ell \in [L] \} $ are locally Lipschitz continuous.
\end{assumption}
According to Lemma \ref{lem:bd-opt-1}, $ lev_{\leq \bar{\gamma} } \Theta $ with $ \bar{\gamma}:=\Theta(z^{0}) $ defined in \eqref{def:gammabar} is nonempty and compact.
Under Assumption \ref{as1}, it follows from the compactness of $ lev_{\leq \bar{\gamma} } \Theta $ that there exist $ K_g>0 $ and $ \{ K_{\ell}>0, \ell \in [L-1] \} $ such that
\begin{align}
	&| g( u ) - g( \bar{u} )  | \leq K_{g} \| u - \bar{u} \| ,  \label{def:Lipg} \\
	& \| \psi_{\ell-1}( \theta, \mathbf{u}_{\ell-1} ) - \psi_{\ell-1}(\theta, \bar{\mathbf{u}}_{\ell-1}  ) \|
	\leq  K_{\ell-1} \|
	\mathbf{u}_{\ell-1} - \bar{\mathbf{u}}_{\ell-1} \| , \, \ell=2,\dots,L \label{def:Lips}
\end{align}
for all $ (\theta^{\top} , \mathbf{u}^{\top})^{\top} $, $ (\theta^{\top} , \bar{\mathbf{u} }^{\top})^{\top}  \in \left( lev_{\leq \bar{\gamma} } \Theta + \epsilon \mathbb{B}(\mathbf{0};1) \right) $, where the positive real number $\epsilon$ is sufficiently small.
In \eqref{def:Lips}, the two terms at the left-hand side are consistent in the component $ \theta $, since the subsequent analysis only needs the Lipschitz continuity moduli of $ \{ \psi_{\ell}, \ell \in [L-1] \} $ in component $u$.
Besides, it should be noted that $ K_{g} $ and $ \{ K_{\ell},\ell\in[L-1] \} $ are non-increasing as $ \{ \beta_{\ell}, \ell \in [L] \}$ increase since functions $ g, \{ \psi_{\ell}, \ell \in [L-1] \} $ and $ \bar{\gamma}=F(z^{0}) $ are independent of penalty parameters and for any $z$, $ \Theta(z) $ is non-decreasing as $ \{ \beta_{\ell}, \ell \in [L] \} $ increase.
Furthermore, in Example \ref{ex2} and Section \ref{sec4} we will show how to estimate $K_{g}$ and $ \{ K_{\ell}, \ell\in [L-1] \} $ for specific applications.
\begin{remark}\label{remark:z0}
	In fact, $z^{0}$ and $ \bar{\gamma} $ can be replaced by any feasible point of \eqref{eq} and the value of $ \Theta $ at that point. The replacement will not affect any theoretical results in this paper.
	We choose $ z^{0} $ and $ \bar{\gamma} $ defined in \eqref{def:gammabar} since $ \mathbf{0} $ is usually not a good candidate  in data fitting. Thus, the requirement $ z\in lev_{\leq \bar{\gamma}} \Theta $ can be regarded as a mild condition.
\end{remark}

For clarity, break the direction $ d \in \R^{\bar{N}} $ according to the blocks of variable $z$ as follows
\begin{equation}
	\label{def:d}
	\begin{aligned}
		&d=(d_{\theta}^{\top}, d_{u}^{\top})^{\top}, \text{ where } \\
		&~d_{u}= (  ( d_{u_{1}  } )^{\top},\dots, ( d_{u_{L} } )^{\top}  )^{\top} \text{ with }
		d_{u_{\ell}  } \in \R^{N_{\ell}}, \ell\in[L].
	\end{aligned}
\end{equation}
Then under Assumption \ref{as1}, it follows from \eqref{eq:mid2-0}-\eqref{eq:mid2-1} and \eqref{def:Lipg}-\eqref{def:Lips} that for any $z  \in lev_{\leq \bar{\gamma} } \Theta $, and any $ d,\bar{d} \in \R^{\bar{N}} $ with $ d_{\theta} = \bar{d}_{\theta} $,
\begin{equation}
	\label{eq:Lip}
	\begin{aligned}
		&| g^{\prime}(u;d_{u}) - g^{\prime}(u;\bar{d}_{u}) | \leq K_{g} \|d_{u}-\bar{d}_{u}  \|, \\
		& \| \psi_{\ell-1}^{\prime}( \theta, \mathbf{u}_{\ell-1}
		; d_{\theta},  d_{ \mathbf{u}_{\ell-1} }  )
		- \psi_{\ell-1}^{\prime}( \theta, \mathbf{u}_{\ell-1} ; \bar{d}_{\theta},  \bar{d}_{ \mathbf{u}_{\ell-1} }  ) \|\\
		& ~ \leq K_{\ell-1} \|  d_{ \mathbf{u}_{\ell-1} } - \bar{d}_{ \mathbf{u}_{\ell-1} }     \|, \, \ell=2,\dots,L,
	\end{aligned}
\end{equation}
where $ d_{\mathbf{u}_{\ell-1}}:=(  ( d_{u_{1}  } )^{\top},\dots, ( d_{u_{\ell-1} } )^{\top}  )^{\top} $ for all $ \ell \in [L] $.  

Apart from Assumption \ref{as1}, the analysis concerning second-order necessary conditions also requires the following assumption about the twice directional differentiability.
\begin{assumption}\label{as2}
	Functions $g $ and $\{ \psi_{\ell-1} , \ell \in [L] \} $ are twice directionally differentiable on $ \R^{\bar{N}_L} $ and $ \R^{ n +  \bar{ N}_{\ell-1} },\ell \in [L] $, respectively.
\end{assumption}

We use the definitions of tangent cone \cite[Definition 6.1]{rockafellar2009variational} and radial cone \cite{PLQ_8}.
\begin{definition}[tangent cone and radial cone]
	\label{def:tanc}
	The tangent cone of a set $\mathcal{F} \subseteq \R^{m} $ at any point $ x \in \mathcal{F} $ is defined as
	\begin{align*}
		\mathcal{T}_{\mathcal{F}}(x)
		:= \{ d \in \R^{m} \mid    \exists x^{k} \rightarrow x \text{ with } x^{k} \in \mathcal{F}  \text{ and } \tau_{k} \downarrow 0, ~ \text{such that }   \frac{x^{k} - x}{\tau_{k}} \rightarrow d  \}.
	\end{align*}
	The radial cone of a set $\mathcal{F} \subseteq \R^{m} $ at any point $ x \in \mathcal{F} $ is defined as
	\begin{align*}
		P_{\mathcal{F}}(x):=\{ d \in  \R^{m}  \mid  \exists \tau_{k} \downarrow 0 \text{ such that }  x+ \tau_{k} d \in \mathcal{F}  \}.
	\end{align*}
\end{definition} 
Then it can be observed that $ P_{\mathcal{F}}(x) \subseteq \mathcal{T}_{\mathcal{F}}(x) $. When $ \mathcal{F} $ is convex, $P_{\mathcal{F}}(x) $ coincides with $\mathcal{T}_{\mathcal{F}}^{\circ }(x) $ used in \cite{min-maxR2}, and it further equals to $\mathcal{T}_{\mathcal{F}}(x)$ when $ \mathcal{F} $ is polyhedral.

\section{Optimality and stationarity}\label{sec3}
This section will establish the relationship between \eqref{eq:1.1}, \eqref{eq} and \eqref{l1pen} in global optimality and (second-order) d-stationarity, and discuss the byproducts regarding second-order sufficient conditions. The d-stationary points are defined by the necessary tangent condition outlined at the end of \cite[Chapter 8.C]{rockafellar2009variational} without proof. And the second-order d-stationarity extends the second-order necessary condition in \cite[Lemma 3.8]{min-maxR2} from a twice semidifferentiable objective function with convex constraints to a twice directionally differentiable objective function with general constraints. 
We provide a detailed proof for the necessity of (second-order) d-stationarity under the assumptions of the nonemptiness of solution sets, twice directional differentiability and local Lipschitz continuity of objective functions as follows.
\begin{lemma}\label{lem:1op}
	Assume that $ \argmin_{x \in \mathcal{F} } f(x) \neq \emptyset $ with $f: \R^{m} \rightarrow \R $.
	If $ \bar{x} \in \mathcal{F}  $ is a local minimizer of $ \min_{x \in \mathcal{F} } f(x) $, $f$ is directionally differentiable at $ \bar{x} $ and locally Lipschitz continuous near $ \bar{x} $, then $ f^{\prime}(\bar{x};d) \geq 0 $ for any $ d \in \mathcal{T}_{\mathcal{F} }(\bar{x})  $.
	Moreover, if $f$ is twice directionally differentiable at $ \bar{x} $, then $ f^{(2)}( \bar{x} ; d ) \geq 0 $ for all $ d \in P_{\mathcal{F}}  (\bar{x} ) \cap \{ d \in \R^{m} \mid f^{\prime}(\bar{x};d) =0 \} $.
\end{lemma}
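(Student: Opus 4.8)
The plan is to derive each inequality directly from the definitions of the tangent cone, the radial cone, and the (second-order) directional derivative, using the local minimality of $\bar{x}$ to produce a nonnegative difference quotient and then passing to the limit.

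For the first-order statement, fix $d \in \mathcal{T}_{\mathcal{F}}(\bar{x})$. By Definition \ref{def:tanc} there exist $x^{k} \in \mathcal{F}$ with $x^{k} \to \bar{x}$ and $\tau_{k} \downarrow 0$ such that $d^{k} := (x^{k} - \bar{x})/\tau_{k} \to d$. Writing $x^{k} = \bar{x} + \tau_{k} d^{k}$ and using that $\bar{x}$ is a local minimizer (so $f(x^{k}) \geq f(\bar{x})$ for all large $k$), I obtain $(f(\bar{x} + \tau_{k} d^{k}) - f(\bar{x}))/\tau_{k} \geq 0$. The idea is then to split
\begin{align*}
\frac{f(\bar{x} + \tau_{k} d^{k}) - f(\bar{x})}{\tau_{k}}
&= \frac{f(\bar{x} + \tau_{k} d^{k}) - f(\bar{x} + \tau_{k} d)}{\tau_{k}} \\
&\quad + \frac{f(\bar{x} + \tau_{k} d) - f(\bar{x})}{\tau_{k}},
\end{align*}
where the second term converges to $f^{\prime}(\bar{x}; d)$ by directional differentiability and the first term is bounded in absolute value by $K \| d^{k} - d \|$ via the local Lipschitz modulus $K$ of $f$ near $\bar{x}$ (both arguments lie in the Lipschitz neighborhood for large $k$). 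Since $\| d^{k} - d \| \to 0$, letting $k \to \infty$ yields $f^{\prime}(\bar{x}; d) \geq 0$.

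For the second-order statement, fix $d \in P_{\mathcal{F}}(\bar{x}) \cap \{ d \in \R^{m} \mid f^{\prime}(\bar{x}; d) = 0 \}$. By Definition \ref{def:tanc} the radial-cone membership supplies $\tau_{k} \downarrow 0$ with $\bar{x} + \tau_{k} d \in \mathcal{F}$; these points converge to $\bar{x}$, so local minimality gives $f(\bar{x} + \tau_{k} d) - f(\bar{x}) \geq 0$ for all large $k$. Because $f^{\prime}(\bar{x}; d) = 0$, the defining quotient \eqref{eq:2-dd} simplifies to $(f(\bar{x} + \tau_{k} d) - f(\bar{x}))/(\tau_{k}^{2}/2) \geq 0$. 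As $f$ is twice directionally differentiable at $\bar{x}$, the limit in \eqref{eq:2-dd} exists as $\tau \downarrow 0$ and hence equals its value along the subsequence $\{ \tau_{k} \}$; since every such term is nonnegative, I conclude $f^{(2)}(\bar{x}; d) \geq 0$.

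The main obstacle lies entirely in the first-order part, where the tangent cone only approximates $d$ by the secants $d^{k}$ rather than realizing it exactly; reconciling $d^{k}$ with $d$ is precisely what the local Lipschitz hypothesis accomplishes through the bound $K \| d^{k} - d \|$. By contrast, the second-order part is transparent, since the radial cone furnishes $d$ exactly with no perturbation, so local Lipschitz continuity is not needed there and the argument rests only on the existence of the second-order limit together with the vanishing of $f^{\prime}(\bar{x}; d)$ along $d$.
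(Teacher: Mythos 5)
Your proof is correct and follows essentially the same route as the paper's: for the first-order claim you use the tangent-cone sequence $x^{k}=\bar{x}+\tau_{k}d^{k}$ and reconcile $d^{k}$ with $d$ via the Lipschitz bound $K\|d^{k}-d\|$, and for the second-order claim you exploit that the radial cone realizes $d$ exactly so that the vanishing of $f^{\prime}(\bar{x};d)$ reduces the quotient in \eqref{eq:2-dd} to a nonnegative one. The only cosmetic difference is that the paper obtains the nonnegativity of the relevant difference quotients by citing Theorems 10.1 and 13.24(a) of Rockafellar--Wets, whereas you derive it directly from local minimality along the realizing sequences; the substantive steps coincide.
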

\begin{proof}
	Firstly, the local optimality implies that $ \bar{x} \in \mathcal{F}  $ is a local minimizer of $ (f + \delta_{\mathcal{F} } ) (x)$ over $ \R^{m}  $.
	Hence it follows from  \cite[Theorem 10.1]{rockafellar2009variational} that
	\begin{align*}
		\liminf_{\tau \downarrow 0, d^{\prime} \rightarrow d } \frac{ (f+ \delta_{\mathcal{F}} )( \bar{x} + \tau d^{\prime} ) - (f+ \delta_{\mathcal{F}} )( \bar{x} ) }{ \tau } \geq 0, \, \forall d \in \R^{m}.
	\end{align*}
	Then the remainder is to show that for any $ d \in \mathcal{T}_{\mathcal{F} }(\bar{x}) $,
	\begin{align*}
		\liminf_{\tau \downarrow 0, d^{\prime} \rightarrow d } \frac{ (f+ \delta_{\mathcal{F}} )( \bar{x} + \tau d^{\prime} ) - (f+ \delta_{\mathcal{F}} )( \bar{x}   ) }{ \tau }  \leq f^{\prime}(\bar{x};d).
	\end{align*} 
	For any $ d \in \mathcal{T}_{\mathcal{F}}(\bar{x}) $, it follows from the definition of tangent cone that there exist $ x^{k}  \rightarrow \bar{x}, x^{k} \in  \mathcal{F} $ and $ \tau_{k} \downarrow 0 $ such that $ d^{k} :=\frac{x^{k} - \bar{x}}{\tau_{k}} \rightarrow d $ as $ k \rightarrow \infty $, which implies
	\begin{align*}
		&\liminf_{d^{\prime}  \rightarrow d , \tau \downarrow 0} \frac{ f (\bar{x} + \tau d^{\prime}) - f(\bar{x}) + \delta_{\mathcal{F}} (\bar{x} + \tau d^{\prime}  ) -  \delta_{\mathcal{F}}(\bar{x}) }{\tau}  \\
		\leq \, & \liminf_{k \rightarrow \infty } \frac{ f (\bar{x} + \tau_{k} d^{k}) - f(\bar{x}) + \delta_{\mathcal{F}} (\bar{x} + \tau_{k} d^{k}  ) -  \delta_{\mathcal{F}}(\bar{x}) }{\tau_{k}} \\
		= \, & \liminf_{k \rightarrow \infty } \frac{ f (\bar{x} + \tau_{k} d^{k}) - f(\bar{x})  }{\tau_{k}} \\
		= \, & f^{\prime} ( \bar{x} ; d ) ,
	\end{align*}
	where the first equality uses $ \bar{x} + \tau_{k} d^{k} = x^{k} \in \mathcal{F} $ and $ \bar{x} \in \mathcal{F} $, the last equality comes from
	\begin{align*}
		&\lim_{d^{\prime}  \rightarrow d , \tau \downarrow 0} \frac{ f (\bar{x} + \tau d^{\prime}) - f(\bar{x})  }{\tau} \notag \\
		=\,& \lim_{d^{\prime}  \rightarrow d , \tau \downarrow 0} \frac{ f (\bar{x} + \tau d ) - f(\bar{x})  }{\tau}
		+\lim_{d^{\prime}  \rightarrow d , \tau \downarrow 0} \frac{ f (\bar{x} + \tau d^{\prime}) - f (\bar{x} + \tau d )  }{\tau} \notag \\
		=\,& \lim_{ \tau \downarrow 0} \frac{ f (\bar{x} + \tau d ) - f(\bar{x})  }{\tau} +0  
		=  f^{\prime} ( \bar{x} ; d )
	\end{align*}
	by the fact that $f$ is directionally differentiable at $\bar{x}$ and locally Lipschitz continuous near $\bar{x}$.

	For the second-order optimality condition, since $ \bar{x} \in \mathcal{F} $ is a local minimizer of $\min_{x \in \R^{m} } (f + \delta_{\mathcal{F}} ) (x) $, it follows from \cite[Theorem 13.24 (a)]{rockafellar2009variational} that for all $ d \in \R^{m} $,
	\begin{align*}
		0 \leq \liminf_{ \tau \downarrow 0, d^{\prime} \rightarrow d  } \frac{(f+ \delta_{\mathcal{F}} )( \bar{x} + \tau d^{\prime} ) - (f+ \delta_{\mathcal{F}} )( \bar{x}   )}{ \tau^{2} /2 },
	\end{align*}
	which implies that for all $ d \in P_{\mathcal{F}} (\bar{x} ) \cap \{ d \in \R^{m} \mid f^{\prime}(\bar{x};d) =0 \} $,
	\begin{align*}
		0 & \leq \liminf_{ \tau \downarrow 0 } \frac{(f+ \delta_{\mathcal{F}} )( \bar{x} + \tau d  ) - (f+ \delta_{\mathcal{F}} )( \bar{x}   )}{ \tau^{2} /2 }
		\leq \liminf_{ k \rightarrow \infty } \frac{f( \bar{x} + \tau_{k} d  ) - f( \bar{x}   )}{ \tau_{k}^{2} /2 } \\
		& = \liminf_{ k \rightarrow \infty } \frac{f( \bar{x} + \tau_{k} d  ) - f( \bar{x}   ) - \tau_{k} f^{\prime}(\bar{x};d) }{ \tau_{k}^{2} /2 }
		= f^{(2) } (\bar{x};d),
	\end{align*}
	where the second inequality uses the definition of $ P_{\mathcal{F}} (\bar{x} ) $, the first equality holds due to $ f^{\prime}(\bar{x};d) =0 $, and the last equality comes from the twice directional differentiability of $ f $ at $ \bar{x} $ and $ \tau_{k} \downarrow 0 $. 
\end{proof}

The first-order condition for convexly constrained optimization problems with a semidifferentiable objective function is established in \cite[Lemma 3.8]{min-maxR2}. Lemma \ref{lem:1op} extends this condition to nonconvexly constrained optimization problems with a directionally differentiable objective function.
Actually, the two first-order conditions are completely identical in form since the directional derivative equals to the subderivative used in \cite[Lemma 3.8]{min-maxR2} under directional differentiability and local Lipschitz continuity.
However, for the second-order condition, the nonconvexity of the feasible region and non-twice-semidifferentiability of the objective function entail us to narrow the range of directions from $ \mathcal{T}^{\circ}_{\mathcal{F}}(x) $ in \cite{min-maxR2} to $ P_{\mathcal{F}}(x) $ and relax the nonnegativity of second-order subderivatives to the nonnegativity of second-order directional derivatives.
Based on Lemma \ref{lem:1op}, we can define unified first-order and second-order necessary conditions as follows.
\begin{definition}[second-order d-stationary point]
	\label{def:1op}
	For any $f: \R^{m} \rightarrow \R $ that is directionally differentiable on $ \R^{m} $ and locally Lipschitz continuous,  and any $\mathcal{F}\subseteq \R^{m}$ such that $ \emptyset \neq \argmin_{ x \in \mathcal{F} } f(x) $, we call $ \bar{x} \in \mathcal{F}  $ a d(irectional)-stationary point of $ \min_{x\in\mathcal{F}} f(x) $ if $  f^{\prime} ( \bar{x} ; d ) \geq 0 $ for all $ d \in \mathcal{T}_{\mathcal{F} }(\bar{x})  $.
	And we further call $ \bar{x}  $ a second-order d-stationary point of $ \min_{x\in\mathcal{F}} f(x) $ if $f$ is also twice directionally differentiable at $ \bar{x} $ with $ f^{(2)}( \bar{x} ; d ) \geq 0 $ for all $ d \in P_{\mathcal{F}}  (\bar{x} ) \cap \{ d \in \R^{m} \mid f^{\prime}(\bar{x};d) =0 \} $.
\end{definition}

\begin{example} To illustrate Definition \ref{def:1op}, we consider
	$ \min_{x \in \mathcal{F} } f(x)$ with $f(x)= \max \{ -1, x_{1} x_{2} \} + 0.1 \|x\|^{2}$ and $ \mathcal{F}=[-1,1]^{2} $.
	This example has only three first-order d-stationary points $(0,0)^\top, (-1,1)^\top $ and $ (1,-1)^\top $,  while $f$ is differentiable at the first point, but not differentiable at the other two points.
	At $ \bar{x}:=(0,0)^\top$, $ f^{\prime}(\bar{x};d )=\nabla f(\bar{x})^\top d \equiv 0 $ for all $ d \in \mathcal{T}_{\mathcal{F}}(\bar{x})=\R^{2} $, whereas $ f^{(2)}( \bar{x} ; d )= 2 d_{1} d_{2} + 0.2 \| d \|^{2} <0 $ for any $ d:=(d_1,d_2)^{\top} \in  \{ d \in P_{\mathcal{F}} (\bar{x})  \mid f^{\prime}(\bar{x};d ) =0 \} =  \R^{2} $ with $d_2=-d_1 \neq 0 $. Hence
	$(0,0)^\top$ is not a second-order d-stationary point, and thus not a local minimizer.
	
	In contrast, at $ \bar{x}:=(-1,1)^\top$,   $ f^{\prime}(\bar{x};d )= (d_{1} - d_{2}) - 0.2 ( d_{1} - d_{2} ) \geq 0 $  for all $ d \in \mathcal{T}_{\mathcal{F}}(\bar{x})=\{ (d_{1}, d_{2} )^{\top} \mid d_{1} \geq 0, d_{2} \leq 0 \} $, and $ f^{(2)}( \bar{x} ; d )= 0 \geq 0  $ for any $ d \in  \{ d \in P_{\mathcal{F}} (\bar{x})  \mid f^{\prime}(\bar{x};d ) =0 \}= \{ d \in\mathcal{T}_{\mathcal{F}}(\bar{x}) \mid f^{\prime}(\bar{x};d ) =0 \} =  \{ \mathbf{0} \} $. Hence $(-1,1)^\top$ is a second-order d-stationary point. Similarly, we can verify that $(1,-1)^\top$ is also a second-order d-stationary point. From the boundedness of the feasible set  ${\cal F}$ and the continuity of the objective function $f$, the optimal solution set of this example is nonempty.
	Since $f((-1,1)^\top)=f((1,-1)^\top)$, the two points are optimal solutions.
\end{example}

Since the local Lipschitz continuity of $ \Psi $, $ F $ and $ \Theta $ naturally holds under Assumption \ref{as1}, the corresponding d-stationary points can be defined once we have checked their directional differentiability.
\begin{proposition}\label{dd_F1}
	Under Assumption \ref{as1}, $ \Psi $ is directionally differentiable on $ \R^{n} $, and the directional derivative of the objective function of  \eqref{eq:1.1} along any $ d_{\theta} \in \R^{n} $ is
	\begin{align}\label{dd}
		g^{\prime} \left( u_{1}, \dots, u_{L} ;  d_{u_{1}}  , \dots ,    d_{u_{L}} \right)
		+   2 \lambda \theta^{\top} d_{\theta} ,
	\end{align}
	where $ u_{\ell}:= \psi_{\ell-1}( \theta, u_{1}, \dots, u_{\ell-1} ) $ for all $ \ell \in [L] $, and $ d_{u_{\ell}} := \psi_{\ell-1}^{\prime} ( \theta, u_{1},\dots,u_{\ell-1};  d_{\theta}, d_{u_{1}} , \dots, d_{u_{\ell-1}} )$ for all $ \ell \in [L] $; 
	$ F $ and $ \Theta $ are directionally differentiable on $ \R^{\bar{N}} $, and for any direction $d \in \R^{\bar{N}} $ defined in \eqref{def:d},
	\begin{align}
		F^{\prime}(z;d)=g^{\prime}(u;d_{u}) + 2 \lambda \theta^{\top} d_{\theta} , \label{dd0}
	\end{align}
	\begin{align}
		\Theta^{\prime} (z;d)
		=\,& F^{\prime}(z;d)
		+ \sum_{\ell=1}^{L} \beta_{\ell} \left(
		\sum_{i \in I_{+}^{\ell}(z)  } [ d_{u_{\ell}} - \psi_{\ell-1}^{\prime} ( \theta, \mathbf{u}_{\ell-1} ; d_{\theta},   d_{\mathbf{u}_{\ell-1}} )  ]_{i} \right. \label{dd1} \\
		&\left. - \sum_{i \in I_{-}^{\ell}(z)  } [ d_{u_{\ell}} - \psi_{\ell-1}^{\prime} ( \theta, \mathbf{u}_{\ell-1} ; d_{\theta},   d_{\mathbf{u}_{\ell-1}} )  ]_{i} \right. \notag \\
		&\left. +\sum_{i \in I_{0}^{\ell}(z)  } \left| [ d_{u_{\ell}} - \psi_{\ell-1}^{\prime} ( \theta, \mathbf{u}_{\ell-1} ; d_{\theta},  d_{\mathbf{u}_{\ell-1}} )  ]_{i} \right|
		\right)  , \notag
	\end{align}
	where for all $ \ell \in [L] $,
	\begin{align*}
		&
		I_{+}^{\ell}(z):=\{ i \in [ N_{\ell}] \mid [ u_{\ell} - \psi_{\ell-1}( \theta, \mathbf{u}_{\ell-1} ) ]_{i}  >0 \}, \\
		&
		I_{-}^{\ell}(z):=\{ i \in [ N_{\ell}] \mid [ u_{\ell} - \psi_{\ell-1}( \theta, \mathbf{u}_{\ell-1} ) ]_{i}  <0 \} ,  \\
		&
		I_{0}^{\ell}(z):= [N_{\ell}] \backslash \left( I_{+}^{\ell}(z)  \cup I_{-}^{\ell}(z) \right) .
	\end{align*}
\end{proposition}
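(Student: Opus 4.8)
The plan is to reduce the entire proposition to a single chain rule for directional derivatives, which I would state and prove in one line, and then apply it repeatedly. The rule is: if $h$ is directionally differentiable at $x$ and $f$ is directionally differentiable at $h(x)$ and locally Lipschitz continuous near $h(x)$, then $f\circ h$ is directionally differentiable at $x$ with $(f\circ h)'(x;d) = f'(h(x); h'(x;d))$. The proof is immediate: directional differentiability of $h$ gives $h(x+\tau d) = h(x) + \tau h'(x;d) + o(\tau)$, and the local Lipschitz modulus $K$ of $f$ absorbs the remainder via $|f(h(x)+\tau h'(x;d)+o(\tau)) - f(h(x)+\tau h'(x;d))| \le K\, o(\tau)$, so dividing by $\tau$ and letting $\tau\downarrow 0$ yields $f'(h(x);h'(x;d))$. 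This is exactly where Assumption \ref{as1} is used: it supplies both the expansion of the inner map and the Lipschitz control of the outer map, and without local Lipschitz continuity of the outer function the rule can fail. Since a composition of locally Lipschitz maps is again locally Lipschitz, the rule can be iterated; because the hypotheses of Assumption \ref{as1} hold on all of $\R^{\bar N_L}$ and $\R^{n+\bar N_{\ell-1}}$, the pointwise conclusion holds at every point, giving directional differentiability on the whole space.

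For $\Psi$ I would argue by induction on $\ell$ that the map $\Phi_\ell:\theta\mapsto \mathbf{u}_\ell = (u_1^\top,\dots,u_\ell^\top)^\top$ is directionally differentiable and locally Lipschitz, with $\Phi_\ell'(\theta;d_\theta)$ having block components $d_{u_1},\dots,d_{u_\ell}$ given by the recursion in the statement. The base case $\Phi_1=\psi_0$ is immediate from Assumption \ref{as1}. For the inductive step, the augmented map $\theta\mapsto(\theta,\Phi_{\ell-1}(\theta))$ inherits directional differentiability and local Lipschitz continuity, and composing it with $\psi_{\ell-1}$ through the chain rule produces $d_{u_\ell} = \psi_{\ell-1}'(\theta,u_1,\dots,u_{\ell-1}; d_\theta, d_{u_1},\dots,d_{u_{\ell-1}})$, which is precisely the claimed recursion; local Lipschitz continuity of $\Phi_\ell$ follows from that of $\Phi_{\ell-1}$ and $\psi_{\ell-1}$. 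Applying the chain rule once more to $\Psi = g\circ\Phi_L$ gives $\Psi'(\theta;d_\theta) = g'(u_1,\dots,u_L; d_{u_1},\dots,d_{u_L})$, and adding the directional derivative $2\lambda\theta^\top d_\theta$ of the smooth Tikhonov term produces \eqref{dd}.

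The formula \eqref{dd0} for $F$ is the easy case and needs no chain rule: in $z=(\theta^\top,u^\top)^\top$ the block $u$ is a free variable rather than a function of $\theta$, so along $d=(d_\theta^\top,d_u^\top)^\top$ the term $g(u)$ contributes only $g'(u;d_u)$ while $\lambda\|\theta\|^2$ contributes $2\lambda\theta^\top d_\theta$. For $\Theta$ I would treat each penalty block separately. Writing $v_\ell(z) := u_\ell - \psi_{\ell-1}(\theta,\mathbf{u}_{\ell-1})$, directional differentiability of $\psi_{\ell-1}$ gives $v_\ell'(z;d) = d_{u_\ell} - \psi_{\ell-1}'(\theta,\mathbf{u}_{\ell-1}; d_\theta, d_{\mathbf{u}_{\ell-1}})$. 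The outer map $\|\cdot\|_1 = \sum_i|\cdot|$ is globally Lipschitz and directionally differentiable, so the chain rule applies componentwise: for the $i$-th entry the directional derivative of $t\mapsto|t|$ at $t=[v_\ell(z)]_i$ along $s=[v_\ell'(z;d)]_i$ equals $s$, $-s$, or $|s|$ according as $[v_\ell(z)]_i$ is positive, negative, or zero. Summing over $i$ and over $\ell$, with the three sign regimes recorded by $I_+^\ell(z)$, $I_-^\ell(z)$, $I_0^\ell(z)$, reproduces \eqref{dd1}.

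I expect the main obstacle to be the inductive bookkeeping in the $\Psi$ part, namely verifying that the chain rule may legitimately be iterated through the nested definition of the $u_\ell$'s and that local Lipschitz continuity is preserved along the way. At the conceptual level the essential point, which I would be careful to highlight, is that the local Lipschitz hypothesis of Assumption \ref{as1} must be invoked at every composition so that the directional chain rule remains valid for the merely directionally differentiable outer functions $g$ and $\psi_{\ell-1}$.
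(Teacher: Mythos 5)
Your proposal is correct and follows essentially the same route as the paper: the paper's proof is exactly the repeated application of the directional-derivative chain rule for locally Lipschitz outer functions (cited there as Proposition 4.1.2 of the Cui--Pang book, which is the rule you state and prove inline) to the nested maps $\theta\mapsto\mathbf{u}_\ell$, to $g$, and to the penalty terms $\|\cdot\|_1\circ v_\ell$. Your explicit sign-regime computation for $|\cdot|$ is just the unpacking of formula \eqref{dd1} that the paper leaves implicit, so there is nothing to fix.
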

\begin{proof}
	Firstly, applying \cite[Proposition 4.1.2]{CP-book} sequentially on $ \psi_{1}(\cdot, \psi_{0}(\cdot)),  \dots ,   \psi_{L-1}(\cdot, \psi_{0}(\cdot), \psi_{1}(\cdot, \psi_{0}(\cdot)), \cdots ) $ and $ \Psi $ with directionally differentiable and locally Lipschitz continuous $ \{ \psi_{\ell-1} , \ell \in [L] \} $ and $g$, we can obtain
	\begin{align*}
		\Psi^{\prime}( \theta; d_{\theta} )=&  g^{\prime} \left( \psi_{0}(\theta ) , ~~~~\psi_{1}( \theta, \psi_{0}(\theta) ) , \dots ; \right.
		\\
		&~~~\left. \psi_{0}^{\prime}(\theta; d_{\theta} ),  \psi_{1}^{\prime}( \theta, \psi_{0}(\theta) ; d_{\theta} , \psi_{0}^{\prime}(\theta; d_{\theta} ) )  , \dots \right),
	\end{align*}
	which can be reorganized as \eqref{dd} with $ \{ u_{\ell}, d_{ u_{\ell} } , \ell \in [L] \} $ defined as above.
	Then the result about $F$ can be directly obtained from Assumption \ref{as1} and Definition \ref{def:1-dd}.
	And for $\Theta$, it is sufficient to show the directional differentiability of each penalty term according to Definition \ref{def:1-dd}, which can be obtained by the directional differentiability and local Lipschitz continuity of $ \|\cdot\|_{1} $ and $ \{ \psi_{\ell-1} , \ell \in [L] \} $ \cite[Proposition 4.1.2]{CP-book}. 
\end{proof}

Together with the nonemptiness of optimal solution sets $ \mathcal{S}, \,   \mathcal{S}_{0} $ and $\mathcal{S}_{1}   $ and Definition \ref{def:1op}, it implies that under Assumption \ref{as1},
\begin{itemize}
	\item $ \mathcal{D}:= \{ \theta \in \R^{n} \mid [ \Psi(\cdot) + \lambda\|\cdot\|^2 ]^{\prime}(\theta;d_{\theta}) \geq 0  \text{ for all } d_{\theta}  \in \R^{n} \} $ is the set of d-stationary points of \eqref{eq:1.1};
	\item $ \mathcal{D}_{0}:= \{ z\in \R^{\bar{N}} \mid z \in \mathcal{F}_{0} \text{ and } F^{\prime} (z  ; d ) \geq 0 \text{ for all } d \in \mathcal{T}_{\mathcal{F}_{0}}(z)  \}  $ is the set of d-stationary points of \eqref{eq};
	\item $ \mathcal{D}_{1}:= \{ z\in \R^{\bar{N}} \mid \Theta^{\prime}(z;d) \geq 0 \text{ for all } d \in \R^{\bar{N}}  \}  $ is the set of d-stationary points of \eqref{l1pen}.
\end{itemize}
Although the d-stationary point of \eqref{eq:1.1} can be defined as above, the complicated nested structure in $ \Psi^{\prime} $ makes it challenging to compute. Notably, through \eqref{eq}, we can express its d-stationarity more clearly and further establish \eqref{eq:1.1}'s relationship with \eqref{l1pen} in Sections \ref{sec3.1} and \ref{sec3.2}, which facilitates computation.

And for the second-order necessary conditions of \eqref{eq} and \eqref{l1pen}, the twice directional differentiability of $   F $ and $ \Theta $ can be verified under Assumptions \ref{as1} and \ref{as2}.
\begin{proposition}\label{2dd_F1}
	Under Assumptions \ref{as1} and \ref{as2}, $F $ and $ \Theta $ are twice directionally differentiable on $ \R^{\bar{N}} $ and for any direction $ d \in \R^{\bar{N}} $ defined in \eqref{def:d},
	\begin{align*}
		F^{(2)}(z;d)=\,&g^{(2)}(u;d_{u}) + 2 \lambda  \| d_{\theta} \|^{2} , 
		\\ 
		\Theta^{(2)} (z;d)
		=\,& F^{(2)}(z;d)  
		+ \sum_{\ell=1}^{L} \beta_{\ell} \left(
		- \sum_{i \in I_{+}^{\ell}(z) \cup I_{0,+}^{\ell}(z;d)  } [  \psi_{\ell-1}^{(2)} ( \theta, \mathbf{u}_{\ell-1} ; d_{\theta},  d_{\mathbf{u}_{\ell-1}} )  ]_{i} \right. \notag \\
		&\left. + \sum_{i \in I_{-}^{\ell}(z) \cup I_{0,-}^{\ell}(z;d) } [   \psi_{\ell-1}^{(2)} ( \theta, \mathbf{u}_{\ell-1} ; d_{\theta},   d_{\mathbf{u}_{\ell-1}} )  ]_{i} \right. \notag \\
		&\left. +\sum_{i \in I_{0,0}^{\ell}(z;d)  } \left| [ \psi_{\ell-1}^{(2)} ( \theta, \mathbf{u}_{\ell-1} ; d_{\theta},   d_{\mathbf{u}_{\ell-1}} )  ]_{i} \right|
		\right)  , \notag
	\end{align*}
	where for all $ \ell \in [L] $, $ I_{+}^{\ell}(z), I_{-}^{\ell}(z) $ and $ I_{0}^{\ell}(z) $ are defined as in Proposition \ref{dd_F1} and
	\begin{align*}
		&
		I_{0,+}^{\ell}(z;d):= \{ i \in I_{0}^{\ell}(z) \mid [ d_{u_{\ell}} - \psi_{\ell-1}^{\prime} ( \theta, \mathbf{u}_{\ell-1} ; d_{\theta},  d_{\mathbf{u}_{\ell-1}} )  ]_{i} >0 \} , \\
		&
		I_{0,-}^{\ell}(z;d):= \{ i \in I_{0}^{\ell}(z) \mid [ d_{u_{\ell}} - \psi_{\ell-1}^{\prime} ( \theta, \mathbf{u}_{\ell-1} ; d_{\theta},   d_{\mathbf{u}_{\ell-1}} )  ]_{i} <0 \} ,  \\
		&
		I_{0,0}^{\ell}(z;d):= I_{0}^{\ell}(z) \backslash \left( I_{0,+}^{\ell}(z;d)  \cup I_{0,-}^{\ell}(z;d) \right) .
	\end{align*}
\end{proposition}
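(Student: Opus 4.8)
The plan is to treat $F$ and $\Theta$ separately, reducing everything to the second-order directional derivative of scalar twice directionally differentiable functions and of their absolute values. For $F(z)=g(u)+\lambda\|\theta\|^{2}$, I would first note that $\lambda\|\theta\|^{2}$ is a smooth quadratic, so its second-order directional derivative along $d=(d_{\theta}^{\top},d_{u}^{\top})^{\top}$ equals $2\lambda\|d_{\theta}\|^{2}$; and $g$ depends on $z$ only through the block $u$, so Assumption~\ref{as2} gives that its second-order directional derivative along $d$ is $g^{(2)}(u;d_{u})$. Since the limit in \eqref{eq:2-dd} is additive over summands whose first- and second-order directional derivatives all exist, summing yields $F^{(2)}(z;d)=g^{(2)}(u;d_{u})+2\lambda\|d_{\theta}\|^{2}$.

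For $\Theta$, the additional work is the penalty $\sum_{\ell=1}^{L}\beta_{\ell}\|u_{\ell}-\psi_{\ell-1}(\theta,\mathbf{u}_{\ell-1})\|_{1}=\sum_{\ell=1}^{L}\beta_{\ell}\sum_{i}|\phi_{i}^{\ell}(z)|$, where $\phi_{i}^{\ell}(z):=[u_{\ell}-\psi_{\ell-1}(\theta,\mathbf{u}_{\ell-1})]_{i}$. Because $u_{\ell}$ enters $z$ linearly and $\psi_{\ell-1}$ depends on $z$ only through the projection $(\theta,\mathbf{u}_{\ell-1})$, Assumptions~\ref{as1} and~\ref{as2} make each $\phi_{i}^{\ell}$ twice directionally differentiable at $z$ along every $d$, with $(\phi_{i}^{\ell})'(z;d)=[d_{u_{\ell}}-\psi_{\ell-1}'(\theta,\mathbf{u}_{\ell-1};d_{\theta},d_{\mathbf{u}_{\ell-1}})]_{i}$ as in Proposition~\ref{dd_F1} and $(\phi_{i}^{\ell})^{(2)}(z;d)=-[\psi_{\ell-1}^{(2)}(\theta,\mathbf{u}_{\ell-1};d_{\theta},d_{\mathbf{u}_{\ell-1}})]_{i}$, since the linear part contributes nothing at second order. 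The proposition then reduces to computing $(|\phi|)^{(2)}(z;d)$ for a scalar $\phi$ that is twice directionally differentiable at $z$ along $d$, and matching the resulting sign patterns against $I_{+}^{\ell}$, $I_{-}^{\ell}$, $I_{0,+}^{\ell}$, $I_{0,-}^{\ell}$ and $I_{0,0}^{\ell}$.

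The computation of $(|\phi|)^{(2)}(z;d)$ starts from the one-sided expansion $\phi(z+\tau d)=\tau\phi'(z;d)+\frac{\tau^{2}}{2}[\phi^{(2)}(z;d)+o(1)]$ as $\tau\downarrow0$, which is exactly the content of twice directional differentiability along the fixed direction $d$. If $\phi(z)>0$ (respectively $<0$), continuity forces $\phi(z+\tau d)$ to keep its sign for small $\tau$, so $|\phi|=\phi$ (respectively $-\phi$) locally along $d$ and $(|\phi|)^{(2)}(z;d)=\phi^{(2)}(z;d)$ (respectively $-\phi^{(2)}(z;d)$); substituting $\phi^{(2)}=-[\psi_{\ell-1}^{(2)}]_{i}$ reproduces the $I_{+}^{\ell}$ and $I_{-}^{\ell}$ contributions. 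The crux is $\phi(z)=0$, where $(|\phi|)'(z;d)=|\phi'(z;d)|$ and I would split on the sign of $\phi'(z;d)$: if $\phi'(z;d)>0$ (the set $I_{0,+}^{\ell}$), the expansion gives $\phi(z+\tau d)>0$ for small $\tau$ and the $\tau$-linear terms cancel, leaving $(|\phi|)^{(2)}=\phi^{(2)}=-[\psi_{\ell-1}^{(2)}]_{i}$, and symmetrically $\phi'(z;d)<0$ (the set $I_{0,-}^{\ell}$) gives $+[\psi_{\ell-1}^{(2)}]_{i}$. The delicate subcase is $\phi'(z;d)=0$ (the set $I_{0,0}^{\ell}$): here the leading behaviour is $\phi(z+\tau d)=\frac{\tau^{2}}{2}[\phi^{(2)}(z;d)+o(1)]$, so $|\phi(z+\tau d)|=\frac{\tau^{2}}{2}|\phi^{(2)}(z;d)|+o(\tau^{2})$ and the absolute value survives the limit, yielding $(|\phi|)^{(2)}(z;d)=|\phi^{(2)}(z;d)|=|[\psi_{\ell-1}^{(2)}]_{i}|$. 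This subcase, where the first-order directional derivative vanishes and $|\cdot|$ must be pulled through the second-order limit, is where I expect the main care to be needed.

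Finally, I would assemble the pieces: by additivity of the limit in \eqref{eq:2-dd} over the finitely many summands, each now known to be twice directionally differentiable along $d$, $\Theta^{(2)}(z;d)$ equals $F^{(2)}(z;d)$ plus $\sum_{\ell=1}^{L}\beta_{\ell}\sum_{i}(|\phi_{i}^{\ell}|)^{(2)}(z;d)$, and grouping each index $i$ according to which of $I_{+}^{\ell}$, $I_{-}^{\ell}$, $I_{0,+}^{\ell}$, $I_{0,-}^{\ell}$, $I_{0,0}^{\ell}$ it lies in gives exactly the stated formula. Twice directional differentiability of $F$ and $\Theta$ on $\R^{\bar{N}}$ then follows since the required limits were shown to exist for arbitrary $z$ and $d$.
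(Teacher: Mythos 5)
Your proposal is correct and follows essentially the same route as the paper: additivity of the second-order limit over summands, triviality of the quadratic and linear parts, and reduction of each penalty term to the second-order directional derivative of $|\phi|$ for a twice directionally differentiable scalar $\phi$, with the sign case analysis matching the index sets $I_{+}^{\ell}, I_{-}^{\ell}, I_{0,+}^{\ell}, I_{0,-}^{\ell}, I_{0,0}^{\ell}$. The only difference is one of detail: the paper delegates the key case split to the max-structure $|x|=\max\{x,-x\}$ via a citation to \cite[Example 4.2.1]{CP-book}, whereas you derive it directly from the one-sided second-order expansion, including the delicate $I_{0,0}^{\ell}$ subcase where the absolute value survives the limit.
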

\begin{proof}
	The result about $F$ can be directly obtained from Assumptions \ref{as1}, \ref{as2} and Definition \ref{def:1-dd}.
	And for $\Theta$, it is sufficient to show the twice directional differentiability of each penalty term according to Definition \ref{def:1-dd}, which can be obtained by the twice directional differentiability and local Lipschitz continuity of $ \{ \psi_{\ell-1} , \ell \in [L] \} $ and the max-structure of  $ |\cdot |   $ (i.e., for any $ x\in \R, |x|= \max\{ x, -x \} $) \cite[Example 4.2.1]{CP-book}. 
\end{proof}
Together with Proposition \ref{dd_F1}, it implies that under Assumptions \ref{as1} and \ref{as2},
\begin{itemize}
	\item $ \mathcal{SD}_{0}:= \{ z \in \mathcal{D}_{0} \mid F^{(2)}(z  ; d ) \geq 0 \text{ for all } d \in P_{\mathcal{F}_{0}} (z ) \text{ satisfying } F^{\prime} (z  ; d ) = 0\}$ is the set of second-order d-stationary points of \eqref{eq};
	\item $ \mathcal{SD}_{1}:= \{ z \in \mathcal{D}_{1} \mid \Theta^{(2)}( z ; d )\geq 0 \text{ for all } d \in \R^{\bar{N}} \text{ satisfying } \Theta^{\prime} (z  ; d ) = 0 \}$ is the set of second-order d-stationary points of \eqref{l1pen}.
\end{itemize}
Although it seems immature to define the second-order d-stationary point of \eqref{eq:1.1} under Assumptions \ref{as1} and \ref{as2}, we could next see how the sets $ \mathcal{SD}_{0},\mathcal{SD}_{1} $ help to provide second-order necessary conditions for \eqref{eq:1.1}.

\subsection{Closed-form of $ \mathcal{T}_{\mathcal{F}_{0}} $}\label{sec3.1}
Here we give the closed-form of the tangent cone of $ \mathcal{F}_{0} $ in \eqref{def:set-F0} based on directional derivatives of constraints of \eqref{eq}, which plays an important role in the following subsections.

\begin{theorem} \label{th:T-express-2}
	Under Assumption \ref{as1}, it holds that
	\begin{align*}
		\mathcal{T}_{\mathcal{F}_{0}}(z)
		= \{ d \in \R^{\bar{N}} \mid  d_{u_{\ell}} = \psi_{\ell-1}^{\prime} ( \theta,  \mathbf{u}_{\ell-1}; d_{\theta},   d_{\mathbf{u}_{\ell-1}} ), \ell\in[L]  \},
	\end{align*}
	for any feasible point $ z \in \mathcal{F}_{0} $.
\end{theorem}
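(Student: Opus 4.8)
The key structural observation I would exploit is that $\mathcal{F}_{0}$ is the graph of a single recursively defined map: once $\theta$ is fixed, the constraints in \eqref{def:set-F0} determine $u_{1}=\psi_{0}(\theta)$, then $u_{2}=\psi_{1}(\theta,u_{1})$, and so on, so that $\mathcal{F}_{0}=\{(\theta^{\top},\Phi_{1}(\theta)^{\top},\dots,\Phi_{L}(\theta)^{\top})^{\top}\mid \theta\in\R^{n}\}$ for the maps $\Phi_{\ell}$ obtained by this pull-out recursion. Because of this, both inclusions can be handled by a single induction on $\ell\in[L]$, and the only analytic tool needed is the semidifferentiability of each $\psi_{\ell-1}$: under Assumption \ref{as1} the \emph{uniform} limit $\lim_{\tau\downarrow 0,\,w\to w_{0}}\tau^{-1}[\psi_{\ell-1}(x+\tau w)-\psi_{\ell-1}(x)]=\psi_{\ell-1}^{\prime}(x;w_{0})$ holds. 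I would establish this by splitting the difference quotient into $\tau^{-1}[\psi_{\ell-1}(x+\tau w)-\psi_{\ell-1}(x+\tau w_{0})]$, which is bounded by $K\|w-w_{0}\|$ through local Lipschitz continuity exactly as in \eqref{eq:mid2-1}, plus $\tau^{-1}[\psi_{\ell-1}(x+\tau w_{0})-\psi_{\ell-1}(x)]$, which tends to $\psi_{\ell-1}^{\prime}(x;w_{0})$ by directional differentiability.

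For the inclusion $\subseteq$, I would take any $d\in\mathcal{T}_{\mathcal{F}_{0}}(z)$ and, via Definition \ref{def:tanc}, fix sequences $z^{k}=(\theta^{k},\mathbf{u}^{k})\in\mathcal{F}_{0}$ with $z^{k}\to z$, $\tau_{k}\downarrow 0$ and $(z^{k}-z)/\tau_{k}\to d$. Setting $\delta^{k}:=(\theta^{k}-\theta)/\tau_{k}\to d_{\theta}$, I would prove $d_{u_{\ell}}=\psi_{\ell-1}^{\prime}(\theta,\mathbf{u}_{\ell-1};d_{\theta},d_{\mathbf{u}_{\ell-1}})$ by induction on $\ell$, with an identical base and inductive step. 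Feasibility of $z^{k}$ gives $u_{\ell}^{k}=\psi_{\ell-1}(\theta^{k},\mathbf{u}_{\ell-1}^{k})$, so $(u_{\ell}^{k}-u_{\ell})/\tau_{k}=\tau_{k}^{-1}[\psi_{\ell-1}((\theta,\mathbf{u}_{\ell-1})+\tau_{k}(\delta^{k},(\mathbf{u}_{\ell-1}^{k}-\mathbf{u}_{\ell-1})/\tau_{k}))-\psi_{\ell-1}(\theta,\mathbf{u}_{\ell-1})]$; by the inductive hypothesis the inner argument converges to $(d_{\theta},d_{\mathbf{u}_{\ell-1}})$, so the semidifferentiability limit forces the left-hand side, which also converges to $d_{u_{\ell}}$, to equal $\psi_{\ell-1}^{\prime}(\theta,\mathbf{u}_{\ell-1};d_{\theta},d_{\mathbf{u}_{\ell-1}})$.

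For the reverse inclusion $\supseteq$, given any $d$ satisfying all the linearized constraints I would build an explicit recovery sequence: choose $\tau_{k}\downarrow 0$, set $\theta^{k}:=\theta+\tau_{k}d_{\theta}$, and define $u_{\ell}^{k}:=\psi_{\ell-1}(\theta^{k},\mathbf{u}_{\ell-1}^{k})$ recursively so that $z^{k}:=(\theta^{k},\mathbf{u}^{k})\in\mathcal{F}_{0}$ and $z^{k}\to z$ by continuity. A parallel induction on $\ell$, again invoking the uniform limit with $w^{k}:=(d_{\theta},(\mathbf{u}_{\ell-1}^{k}-\mathbf{u}_{\ell-1})/\tau_{k})\to(d_{\theta},d_{\mathbf{u}_{\ell-1}})$, shows $(u_{\ell}^{k}-u_{\ell})/\tau_{k}\to\psi_{\ell-1}^{\prime}(\theta,\mathbf{u}_{\ell-1};d_{\theta},d_{\mathbf{u}_{\ell-1}})=d_{u_{\ell}}$; together with $(\theta^{k}-\theta)/\tau_{k}=d_{\theta}$ this yields $(z^{k}-z)/\tau_{k}\to d$, so $d\in\mathcal{T}_{\mathcal{F}_{0}}(z)$.

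The main obstacle is the interplay between the perturbation of the direction and the vanishing step inside the difference quotient: in both inclusions the arguments $\delta^{k}$ (respectively $w^{k}$) move while $\tau_{k}\downarrow 0$, so the plain directional derivative is not enough and one genuinely needs the uniform (semidifferentiable) limit, which is precisely where local Lipschitz continuity is used. Once that lemma is secured, the remainder is a routine double induction, because the recursive pull-out structure of the constraints lets the tangent condition at level $\ell$ be read off from the already-established behaviour at levels $1,\dots,\ell-1$.
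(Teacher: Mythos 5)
Your proposal is correct and follows essentially the same route as the paper: both inclusions are proved by combining feasibility of $z$ (and of the approximating/constructed sequence) with the ``uniform'' difference-quotient limit obtained by splitting off a term controlled by local Lipschitz continuity, and the recovery sequence for $\supseteq$ is exactly the paper's recursive construction $u_{\ell}^{k}=\psi_{\ell-1}(\theta^{k},\mathbf{u}_{\ell-1}^{k})$. The only cosmetic difference is that in the $\subseteq$ direction no induction is actually needed, since the convergence of the blocks $(\mathbf{u}_{\ell-1}^{k}-\mathbf{u}_{\ell-1})/\tau_{k}\to d_{\mathbf{u}_{\ell-1}}$ already follows from $(z^{k}-z)/\tau_{k}\to d$ rather than from an inductive hypothesis.
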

\begin{proof}
	We firstly prove the one-sided inclusion
	\begin{align*}
		\mathcal{T}_{\mathcal{F}_{0}}(z)
		\subseteq \{ d \in \R^{\bar{N}} \mid  d_{u_{\ell}} = \psi_{\ell-1}^{\prime} ( \theta,  \mathbf{u}_{\ell-1} ; d_{\theta},  d_{\mathbf{u}_{\ell-1}}  ), \ell\in[L]  \}.
	\end{align*}
	According to Definition \ref{def:tanc}, for any $ d \in \mathcal{T}_{\mathcal{F}_{0}}(z) $, there exists a sequence $ \{ z^{k} \in \mathcal{F}_{0} , k \in \mathbb{Z}_{++} \} $ converging to $z$ and a sequence $ \tau_{k} \downarrow 0 $ such that
	$
	d= \lim_{k \rightarrow \infty } \frac{z^{k} -z }{\tau_{k}}
	$.
	Denoting $ d^{k} := \frac{z^{k} -z }{\tau_{k}} $, we have $ z^{k}=z+\tau_{k} d^{k}$ for all $k$ and $ d^{k} \rightarrow d $ as $ k \to \infty $. For $ \ell=1 $, it follows from the definition of $ \psi_{0}^{\prime} $, $ \tau_{k} \downarrow 0 $, local Lipschitz continuity of $\psi_{0}$ and $ d_{\theta}^{k} \rightarrow d_{\theta} $ that
	\begin{align*}
		d_{u_{1}} - \psi_{0}^{\prime} ( \theta; d_{\theta} ) 
		= \, & \lim_{k \rightarrow \infty } \left[ \frac{ u_{1}^{k} - u_{1} }{ \tau_{k} }  - \frac{ \psi_{0}( \theta+\tau_{k} d_{\theta}^{k} ) - \psi_{0}( \theta ) }{\tau_{k}}\right] \\
		=\, & \lim_{k \rightarrow \infty }   \frac{ [u_{1}^{k} - \psi_{0}( \theta^{k} ) ] -[ u_{1} - \psi_{0}( \theta )] }{ \tau_{k} }
		=0,
	\end{align*}
	where 
	the second equality uses $ \theta^{k} = \theta+\tau_{k} d_{\theta}^{k} $, the last equality comes from $ z^{k},z \in \mathcal{F}_{0} $.
	For $\ell=2,\dots,L$,
	it follows from the definition of $ \psi_{\ell-1}^{\prime} $, $ \tau_{k} \downarrow 0 $, local Lipschitz continuity of $\psi_{\ell-1}$ and $ (d_{\theta}^{k}, d_{\mathbf{u}_{\ell-1}}^{k} ) \rightarrow (d_{\theta}, d_{\mathbf{u}_{\ell-1}}  ) $ that
	\begin{align*}
		& d_{u_{\ell}} - \psi_{\ell-1}^{\prime} ( \theta,  \mathbf{u}_{\ell-1}  ; d_{\theta}, d_{\mathbf{u}_{\ell-1}} ) \\ 
		= \, & \lim_{k \rightarrow \infty } \left[ \frac{ u_{\ell}^{k} - u_{\ell} }{ \tau_{k} }
		- \frac{ \psi_{\ell-1}( \theta+\tau_{k} d_{\theta}^{k},  \mathbf{u}_{\ell-1}  + \tau_{k}  d_{\mathbf{u}_{\ell-1}}^{k} ) - \psi_{\ell-1}( \theta, \mathbf{u}_{\ell-1}  ) }{\tau_{k}}\right]\\
		=\, & \lim_{k \rightarrow \infty }   \frac{ [u_{\ell}^{k} - \psi_{\ell-1}( \theta^{k},  \mathbf{u}_{\ell-1}^{k}  ) ] -[ u_{\ell} - \psi_{\ell-1}( \theta ,  \mathbf{u}_{\ell-1}  )] }{ \tau_{k} }
		=0,
	\end{align*}
	where 
	the second equality holds due to $ z^{k} = z+\tau_{k} d^{k} $, the last equality comes from $ z^{k},z \in \mathcal{F}_{0} $.

	Next we deduce the reverse inclusion
	\begin{align*}
		\mathcal{T}_{\mathcal{F}_{0}}(z)
		\supseteq \{ d \in \R^{\bar{N}} \mid  d_{u_{\ell}} = \psi_{\ell-1}^{\prime} ( \theta,  \mathbf{u}_{\ell-1}  ; d_{\theta}, d_{\mathbf{u}_{\ell-1}} ), \ell\in[L]  \}.
	\end{align*}
	By Definition \ref{def:tanc}, it is equivalent to show, for any $ d \in \R^{\bar{N}} $ satisfying
	\begin{equation}\label{eq:T2-1}
		\begin{aligned}
			d_{u_{\ell}} = \psi_{\ell-1}^{\prime} ( \theta,  \mathbf{u}_{\ell-1}  ; d_{\theta}, d_{\mathbf{u}_{\ell-1}} ), \,  \ell\in[L]  ,
		\end{aligned}
	\end{equation}
	there exist sequences $ \{ \tau_{k} \downarrow 0 \} $ and $ \{ d^{k} \rightarrow d \} $ such that $ \{  z+ \tau_{k} d^{k} \} \subseteq \mathcal{F}_{0} $.
	For any $d$ satisfying \eqref{eq:T2-1} and any decreasing sequence $ \{ \tau_{k} \downarrow 0 \} $, define
	\begin{align*}
		&d_{\theta}^{k} := d_{\theta}, ~
		d_{u_{1}}^{k} :=\frac{ \psi_{0}( \theta + \tau_{k} d_{\theta}^{k}  )  - \psi_{0}( \theta) }{ \tau_{k} }, \text{ and }\\
		&d_{u_{\ell}}^{k} :=\frac{ \psi_{\ell-1}( \theta + \tau_{k} d_{\theta}^{k},   \mathbf{u}_{\ell-1} + \tau_{k} d_{\mathbf{u}_{\ell-1}}^{k}  )  - \psi_{\ell-1}( \theta,  \mathbf{u}_{\ell-1} ) }{ \tau_{k} } \text{ in the order of }  \ell=2,\dots,L,
	\end{align*}
	for all $k  $.
	We prove that $ d^{k}\rightarrow d $ and $   z^{k}:= ( z+ \tau_{k} d^{k} ) \in \mathcal{F}_{0} $ for all $k  $. 
	Firstly, $ \lim_{k\rightarrow \infty} d_{\theta}^{k}=d_{\theta} $. Then it follows from the definition of $ d_{u_{1}}^{k} $, $ d_{\theta}^{k} =d_{\theta} $, $ \tau_{k} \downarrow 0 $ and the directional differentiability of $ \psi_{0} $ that
	$
	\lim_{k\rightarrow \infty }  d_{u_{1}}^{k} 
	= \psi_{0}^{\prime} ( \theta; d_{\theta} )
	= d_{u_{1}}
	$,
	where 
	the last equality holds due to \eqref{eq:T2-1}.
	For any $ \ell=2,\dots,L $, assume that $ d_{\mathbf{u}_{\ell-1}}^{k} \rightarrow d_{\mathbf{u}_{\ell-1}} $ has been verified. Then it follows from the definition of $ d_{u_{\ell}}^{k} $ and local Lipschitz continuity of $ \psi_{\ell-1} $ that
	\begin{align*}
		\lim_{k\rightarrow \infty }  d_{u_{\ell}}^{k} 
		= \, &
		\lim_{k\rightarrow \infty }  \frac{ \psi_{\ell-1}( \theta + \tau_{k} d_{\theta} ,   \mathbf{u}_{\ell-1} + \tau_{k} d_{\mathbf{u}_{\ell-1}}      )  - \psi_{\ell-1}( \theta,   \mathbf{u}_{\ell-1}    ) }{ \tau_{k} }\\
		= \,& \psi_{\ell-1}^{\prime} ( \theta,   \mathbf{u}_{\ell-1}     ; d_{\theta},   d_{\mathbf{u}_{\ell-1}}      )  
		= \,  d_{u_{\ell}},
	\end{align*}
	where the second equality comes from $ \tau_{k} \downarrow 0 $ and directional differentiability of $ \psi_{\ell-1} $, the last equality holds due to \eqref{eq:T2-1}. By induction and \eqref{def:d}, we obtain that $ d^{k} \rightarrow d $. 
	From $z^{k}:= ( z+ \tau_{k} d^{k} )$ and \eqref{def:z}, \eqref{def:d}, we first have
	\begin{align}\label{eq:T2-2}
		\theta^{k}:= \theta + \tau_{k} d_{\theta}^{k} ,\text{ and }
		u_{\ell}^{k} = u_{\ell}  + \tau_{k} d_{u_{\ell} }^{k}, \, \forall \ell\in [L].
	\end{align}
	For $\ell=1$, \eqref{eq:T2-2} and the definition of $ d_{u_{1} }^{k} $ imply that
	$
	u_{1}^{k}
	= u_{1}  + \psi_{0}( \theta^{k}   )  - \psi_{0}( \theta)
	= \psi_{0}( \theta^{k}   )
	$,
	where the last equality follows from $ z \in \mathcal{F}_{0} $.
	For $ \ell=2,\dots,L $, \eqref{eq:T2-2} and the definition of $ d_{u_{\ell} }^{k} $ imply that
	$
	u_{\ell}^{k} 
	= \,    u_{\ell}  +  \psi_{\ell-1}( \theta^{k} ,  \mathbf{u}_{\ell-1}^{k}    )  - \psi_{\ell-1}( \theta,  \mathbf{u}_{\ell-1}   )
	= \,  \psi_{\ell-1}( \theta^{k} ,  \mathbf{u}_{\ell-1}^{k}    )
	$,
	where the last equality follows from $ z \in \mathcal{F}_{0} $. Combining the results for all $ \ell \in [L] $, we obtain that $ z^{k}\in \mathcal{F}_{0} $ for all $k$. 
\end{proof}

It is noteworthy that Theorem \ref{th:T-express-2} provides the expression for $ \mathcal{T}_{\mathcal{F}_{0}}(z) $, where $\mathcal{F}_{0}$ is defined by nonsmooth constraints. In general, such an expression for the tangent cone of the feasible region is only achievable for smooth constraints under the linear independence constraint qualification (LICQ) \cite[Lemma 12.2]{nocedal2006numerical}.
The one-sided inclusion for $\mathcal{T}_{\mathcal{F}_{0}}(z) $ can only guarantee one-sided implication between \eqref{eq} and \eqref{l1pen} \cite[Theorem 9.2.1 and Remark 9.2.1]{CP-book}, while the closed-form in Theorem \ref{th:T-express-2} can guarantee the equivalence (see Theorem \ref{th:P0=P1}).
If we use constraint qualifications for nonsmooth constraints named NNAMCQ \cite{LReLU-LLC} and relations between tangent and normal cones \cite[Theorems 6.26 and 6.28]{rockafellar2009variational}, we can only obtain a subset of $ \mathcal{T}_{\mathcal{F}_{0}}(z) $
presented by $ \{ \psi_{\ell-1}^{\prime}, \ell\in[L] \} $, which fails to imply the full characterization of $ \mathcal{T}_{\mathcal{F}_{0}}(z) $ in certain cases such as $ \mathcal{T}_{\mathcal{F}_{0}^{RNN}}(z) $ in Section \ref{sec4}. 
Noting that $ P_{\mathcal{F}_{0}}(z)  \subseteq \mathcal{T}_{\mathcal{F}_{0}} (z)  $ for any $ z \in \mathcal{F}_{0} $, Theorem \ref{th:T-express-2} also provides the expression of a superset of $ P_{\mathcal{F}_{0}} (z) $, which helps to obtain its closed-form in certain cases (see Section \ref{sec4}).

\subsection{Equivalence in optimality and d-stationarity} \label{sec3.2}
Here we show the equivalence of \eqref{eq:1.1}, \eqref{eq} and \eqref{l1pen} in global optimality and d-stationarity.
Firstly, \eqref{eq:1.1} and \eqref{eq} are equivalent in global optimality as we discussed after Lemma \ref{lem:bd-opt-0}.
Similarly, according to Proposition \ref{dd_F1} and Theorem \ref{th:T-express-2}, \eqref{eq:1.1} and \eqref{eq} are equivalent in d-stationarity when neglecting dimension lifting.
\begin{lemma}\label{lem:P-P0}
	If $ \theta \in \mathcal{D} $, then $ z := ( \theta^{\top}, u_{1}^{\top}, \dots , u_{L}^{\top} )^{\top}  \in \mathcal{D}_{0}  $ where $u_{\ell}:= \psi_{\ell-1} ( \theta ,  u_{1},   \dots , u_{\ell-1} ) $ for all $ \ell \in[L]$.
	Conversely, if $ z:= ( \theta^{\top}, u^{\top} )^{\top}  \in \mathcal{D}_{0}  $, then $ \theta \in \mathcal{D} $.
\end{lemma}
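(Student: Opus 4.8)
The plan is to exploit a single structural coincidence: the closed form of $\mathcal{T}_{\mathcal{F}_{0}}(z)$ in Theorem~\ref{th:T-express-2} encodes exactly the recursion that generates the auxiliary directions $d_{u_{\ell}}$ appearing in the chain-rule expression \eqref{dd} for the directional derivative of $\Psi$. I would therefore first record the elementary observation that, for any feasible $z=(\theta^{\top},u^{\top})^{\top}\in\mathcal{F}_{0}$, the assignment $d_{\theta}\mapsto d=(d_{\theta}^{\top},d_{u}^{\top})^{\top}$ in which the blocks are defined in the order $\ell=1,\dots,L$ by
$$d_{u_{\ell}}:=\psi_{\ell-1}^{\prime}(\theta,\mathbf{u}_{\ell-1};d_{\theta},d_{\mathbf{u}_{\ell-1}})$$
is a bijection from $\R^{n}$ onto $\mathcal{T}_{\mathcal{F}_{0}}(z)$, with inverse the projection $d\mapsto d_{\theta}$. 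Surjectivity and the fact that the image lands in the tangent cone are immediate from Theorem~\ref{th:T-express-2}, while injectivity follows because the recursion determines $d_{u_{1}},\dots,d_{u_{L}}$ uniquely once $d_{\theta}$ is fixed.

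The key consequence is that under this bijection the two directional derivatives agree. Indeed, for $d\in\mathcal{T}_{\mathcal{F}_{0}}(z)$ the constraint equations force $d_{u_{\ell}}=\psi_{\ell-1}^{\prime}(\theta,\mathbf{u}_{\ell-1};d_{\theta},d_{\mathbf{u}_{\ell-1}})$, so the $d_{u_{\ell}}$ used in \eqref{dd0} are precisely those prescribed in \eqref{dd} for $[\Psi+\lambda\|\cdot\|^{2}]^{\prime}(\theta;d_{\theta})$; hence $F^{\prime}(z;d)=[\Psi+\lambda\|\cdot\|^{2}]^{\prime}(\theta;d_{\theta})$ whenever $z$ is feasible with the indicated $u_{\ell}$ and $d$ corresponds to $d_{\theta}$.

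With this identity in hand the two implications are routine. For the forward direction, given $\theta\in\mathcal{D}$, I set $u_{\ell}:=\psi_{\ell-1}(\theta,u_{1},\dots,u_{\ell-1})$, which makes $z$ feasible; then for an arbitrary $d\in\mathcal{T}_{\mathcal{F}_{0}}(z)$ its $d_{\theta}$-component lies in $\R^{n}$, so $F^{\prime}(z;d)=[\Psi+\lambda\|\cdot\|^{2}]^{\prime}(\theta;d_{\theta})\geq 0$, giving $z\in\mathcal{D}_{0}$. For the converse, given $z=(\theta^{\top},u^{\top})^{\top}\in\mathcal{D}_{0}$ (hence $z\in\mathcal{F}_{0}$, so the $u_{\ell}$ satisfy the defining recursion), I take an arbitrary $d_{\theta}\in\R^{n}$, build the associated $d\in\mathcal{T}_{\mathcal{F}_{0}}(z)$ via the bijection, and read off $[\Psi+\lambda\|\cdot\|^{2}]^{\prime}(\theta;d_{\theta})=F^{\prime}(z;d)\geq 0$, whence $\theta\in\mathcal{D}$.

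I expect no substantive obstacle: the whole argument reduces to recognizing that the ``internal'' auxiliary directions in the chain rule for $\Psi^{\prime}$ are exactly the tangent directions characterized in Theorem~\ref{th:T-express-2}. The only point demanding care is bookkeeping---verifying that the recursively defined $d_{u_{\ell}}$ on the two sides genuinely coincide block by block, which requires the feasibility relations $u_{\ell}=\psi_{\ell-1}(\theta,\mathbf{u}_{\ell-1})$ so that $g^{\prime}$ is evaluated at the same point $u$ in both \eqref{dd} and \eqref{dd0}---and confirming that dimension lifting is the only discrepancy between the two stationary-point sets, exactly as the statement's phrase ``neglecting dimension lifting'' anticipates.
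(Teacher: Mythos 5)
Your proposal is correct and follows essentially the same route as the paper: both directions rest on matching the recursion defining $\mathcal{T}_{\mathcal{F}_{0}}(z)$ in Theorem~\ref{th:T-express-2} with the recursion for the auxiliary directions in the chain-rule formula \eqref{dd}, so that $F^{\prime}(z;d)=\Psi^{\prime}(\theta;d_{\theta})+2\lambda\theta^{\top}d_{\theta}$ on feasible points and tangent directions. Your explicit packaging of this as a bijection $d_{\theta}\mapsto d$ between $\R^{n}$ and $\mathcal{T}_{\mathcal{F}_{0}}(z)$ is a clean way to state what the paper does implicitly, but it is not a different argument.
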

\begin{proof}
	For any $ \theta \in \mathcal{D} $, it follows from $ z := ( \theta^{\top}, u_{1}^{\top}, \dots , u_{L}^{\top} )^{\top}  $ with $u_{\ell}:= \psi_{\ell-1} ( \theta ,   u_{1}, \dots , u_{\ell-1} ) $ for all $ \ell\in[L]$ that $ z \in \mathcal{F}_{0} $.
	Then for any $ d \in \mathcal{T}_{\mathcal{F}_{0}  }(z) $, we have $ d_{u_{\ell}} = \psi_{\ell-1}^{\prime} ( \theta,  \mathbf{u}_{\ell-1}; d_{\theta},   d_{\mathbf{u}_{\ell-1}} )$ for all $\ell \in[L] $ by Theorem \ref{th:T-express-2}.
	Together with Proposition \ref{dd_F1}, it implies that for any $ d \in \mathcal{T}_{\mathcal{F}_{0}  }(z) $,
	\begin{align*}
		F^{\prime}(z;d) = \Psi^{\prime}( \theta ; d_{\theta} ) + 2 \lambda \theta^{\top} d_{\theta} \geq 0,
	\end{align*}
	where the inequality comes from $ \theta \in \mathcal{D} $. 
	On the other hand, if $ z:= ( \theta^{\top}, u^{\top} )^{\top}  \in \mathcal{D}_{0}  $, then $ z \in \mathcal{F}_{0} $, i.e., $u_{\ell}:= \psi_{\ell-1} ( \theta ,  u_{1}, \dots , u_{\ell-1} ) $ for all $ \ell\in[L]$.
	Then for any $ d_{\theta} \in \R^{n} $, it follows from Proposition \ref{dd_F1} that under the setting of $ d:= ( d_{\theta}^{\top} , d_{u_{1}}^{\top}, \dots, d_{u_{L}}^{\top} )^{\top} $ with $ d_{u_{\ell}} := \psi_{\ell-1}^{\prime} ( \theta, u_{1},\dots,u_{\ell-1}; d_{\theta}, d_{u_{1}} , \dots, d_{u_{\ell-1}} )$ for all $ \ell \in [L] $,
	\begin{align*}
		\Psi^{\prime}( \theta ; d_{\theta} ) + 2 \lambda \theta^{\top} d_{\theta}
		=F^{\prime}(z;d)
		\geq 0,
	\end{align*}
	where the inequality uses $ d \in \mathcal{T}_{\mathcal{F}_{0}  }(z) $ from Theorem \ref{th:T-express-2} and $ z \in \mathcal{D}_{0}  $.
\end{proof} 

To establish the equivalence between \eqref{eq} and \eqref{l1pen}, inspired by Theorem 2.1 (a) of \cite{DNN-CHP}, we first show that under proper setting of $ \{ \beta_{\ell}>0, \ell\in [L] \} $ restricted by $ K_{g} $ and $ \{ K_{\ell}>0, \ell\in [L-1] \} $, the d-stationary point of \eqref{l1pen} in $ lev_{\leq \bar{\gamma}} \Theta $ must be feasible to \eqref{eq}, where $\bar{\gamma}$ is defined in \eqref{def:gammabar}.
\begin{lemma}\label{lem:D1-F0}
	Under Assumption \ref{as1},
	let $z$ be a d-stationary point of \eqref{l1pen} with $ \Theta(z) \leq \bar{\gamma} $ and $ \{ \beta_{\ell}>0, \ell\in [L] \} $ satisfying
	\begin{align} \label{eq:br-threshold-2}
		\beta_{\ell} >  K_{g}  \prod_{j=\ell+1}^{L} ( 1+ K_{j-1} ) , \text{ for all } \ell \in [L].
	\end{align}
	Then $ z \in \mathcal{F}_{0} $.
\end{lemma}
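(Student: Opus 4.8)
The plan is to argue by contradiction. Suppose $z=(\theta^{\top},u^{\top})^{\top}$ is a d-stationary point of \eqref{l1pen} with $\Theta(z)\le\bar{\gamma}$ and \eqref{eq:br-threshold-2} holding, yet $z\notin\mathcal{F}_{0}$, so that the residual $r_{\ell}:=u_{\ell}-\psi_{\ell-1}(\theta,\mathbf{u}_{\ell-1})$ is nonzero for at least one $\ell\in[L]$. The hypothesis $\Theta(z)\le\bar{\gamma}$ is what places $z$ in $lev_{\leq\bar{\gamma}}\Theta$, which is exactly the region on which the Lipschitz estimates \eqref{def:Lipg}, \eqref{def:Lips} and hence \eqref{eq:Lip} are valid; this is where that assumption enters. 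I would then exhibit a single direction $d$ along which $\Theta^{\prime}(z;d)<0$, contradicting $z\in\mathcal{D}_{1}$.

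To build such a direction, I set $d_{\theta}=0$ and define the $u$-blocks recursively, in the order $\ell=1,\dots,L$, by
$$d_{u_{\ell}}:=-r_{\ell}+\psi_{\ell-1}^{\prime}(\theta,\mathbf{u}_{\ell-1};0,d_{\mathbf{u}_{\ell-1}}),\quad\ell=1,\dots,L,$$
so that the directional residual $d_{u_{\ell}}-\psi_{\ell-1}^{\prime}(\theta,\mathbf{u}_{\ell-1};d_{\theta},d_{\mathbf{u}_{\ell-1}})$ equals $-r_{\ell}$ for every $\ell$ (for $\ell=1$ this reads $d_{u_{1}}=-r_{1}$, since $\psi_{0}^{\prime}(\theta;0)=0$). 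Substituting into the expression \eqref{dd1} for $\Theta^{\prime}(z;d)$ in Proposition \ref{dd_F1}, the choice $d_{\theta}=0$ annihilates the $2\lambda\theta^{\top}d_{\theta}$ term, giving $F^{\prime}(z;d)=g^{\prime}(u;d_{u})$, while a componentwise sign check over $I_{+}^{\ell}(z),I_{-}^{\ell}(z),I_{0}^{\ell}(z)$ collapses the $\ell$-th penalty block to $-\|r_{\ell}\|_{1}$. Hence $\Theta^{\prime}(z;d)=g^{\prime}(u;d_{u})-\sum_{\ell=1}^{L}\beta_{\ell}\|r_{\ell}\|_{1}$, and d-stationarity forces $\sum_{\ell=1}^{L}\beta_{\ell}\|r_{\ell}\|_{1}\le g^{\prime}(u;d_{u})\le K_{g}\|d_{u}\|$, the last inequality using \eqref{eq:mid2-0}.

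The crux, and the step I expect to be the main obstacle, is to bound $\|d_{u}\|$ by the residuals in a way that matches the product thresholds in \eqref{eq:br-threshold-2}. From the defining recursion together with the Lipschitz estimate \eqref{eq:Lip} applied with $\bar{d}=0$ (note $\psi_{\ell-1}^{\prime}(\theta,\mathbf{u}_{\ell-1};0,0)=0$), I obtain $\|d_{u_{\ell}}\|\le\|r_{\ell}\|+K_{\ell-1}\|d_{\mathbf{u}_{\ell-1}}\|$ for $\ell\ge2$ and $\|d_{u_{1}}\|=\|r_{1}\|$. Writing $S_{\ell}:=\sum_{j=1}^{\ell}\|d_{u_{j}}\|$ and using $\|d_{\mathbf{u}_{\ell-1}}\|\le S_{\ell-1}$ produces the scalar recursion $S_{\ell}\le(1+K_{\ell-1})S_{\ell-1}+\|r_{\ell}\|$, which unfolds by induction into $\|d_{u}\|\le S_{L}\le\sum_{\ell=1}^{L}\big(\prod_{j=\ell+1}^{L}(1+K_{j-1})\big)\|r_{\ell}\|$. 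Verifying that the accumulated factors line up exactly with $\prod_{j=\ell+1}^{L}(1+K_{j-1})$ is the delicate bookkeeping, and it is precisely this product that dictates the threshold for $\beta_{\ell}$.

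Finally I combine the two estimates. Using $\|r_{\ell}\|\le\|r_{\ell}\|_{1}$ on the left and the bound on $\|d_{u}\|$ on the right,
$$\sum_{\ell=1}^{L}\beta_{\ell}\|r_{\ell}\|\le\sum_{\ell=1}^{L}\beta_{\ell}\|r_{\ell}\|_{1}\le K_{g}\sum_{\ell=1}^{L}\Big(\prod_{j=\ell+1}^{L}(1+K_{j-1})\Big)\|r_{\ell}\|,$$
whence $\sum_{\ell=1}^{L}\big(\beta_{\ell}-K_{g}\prod_{j=\ell+1}^{L}(1+K_{j-1})\big)\|r_{\ell}\|\le0$. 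By \eqref{eq:br-threshold-2} every coefficient is strictly positive while every $\|r_{\ell}\|\ge0$, which forces $r_{\ell}=0$ for all $\ell\in[L]$, contradicting the choice of $z$. Therefore $z\in\mathcal{F}_{0}$.
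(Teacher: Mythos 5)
Your proof is correct, but it is organized differently from the paper's. The paper argues layer by layer: for each $\ell=L,\dots,1$ it separately supposes $u_{\ell}\neq\psi_{\ell-1}(\theta,\mathbf{u}_{\ell-1})$ and builds a dedicated direction that is zero on blocks $1,\dots,\ell-1$, equals $-r_{\ell}$ on block $\ell$, and propagates tangentially ($d_{u_{j}}=\psi_{j-1}^{\prime}(\cdots)$) on blocks $j>\ell$, so that only the $\ell$-th penalty block survives and contributes $-\beta_{\ell}\|d_{u_{\ell}}\|_{1}$; the contradiction $\Theta^{\prime}(z;d)<0$ is then derived $L$ times, once per layer. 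You instead construct a \emph{single} direction whose directional residual equals $-r_{\ell}$ at every layer simultaneously, collapse all $L$ penalty blocks at once to $-\sum_{\ell}\beta_{\ell}\|r_{\ell}\|_{1}$, and conclude from the weighted inequality $\sum_{\ell}(\beta_{\ell}-K_{g}\prod_{j=\ell+1}^{L}(1+K_{j-1}))\|r_{\ell}\|\leq 0$ that all residuals vanish in one shot. The key technical ingredients are identical — the sign analysis over $I_{+}^{\ell},I_{-}^{\ell},I_{0}^{\ell}$, the Lipschitz estimate \eqref{eq:Lip} at $\bar{d}=\mathbf{0}$ with $d_{\theta}=\mathbf{0}$, and the recursion producing the factors $\prod_{j=\ell+1}^{L}(1+K_{j-1})$ — and your scalar recursion $S_{\ell}\leq(1+K_{\ell-1})S_{\ell-1}+\|r_{\ell}\|$ unrolls to exactly the same product as the paper's induction \eqref{eq:4-4}. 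What your version buys is economy: one direction and one inequality instead of $L$ separate contradiction arguments, and it arrives at the same thresholds \eqref{eq:br-threshold-2}. What the paper's version buys is that each single-layer direction is also the object reused (essentially verbatim) in the RNN refinement of Corollary \ref{cor:rnn}, where the layer-by-layer bookkeeping is what lets the authors sharpen the constants.
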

\begin{proof}
	To show $ z \in \mathcal{F}_{0} $, we are going to prove $ u_{\ell} = \psi_{\ell-1} ( \theta, u_{1}, \dots, u_{\ell-1} ) $ in the order of $ \ell=L,\dots,1 $ separately.

	To show $ u_{L} = \psi_{L-1} ( \theta, u_{1}, \dots, u_{L-1} ) $ by contradiction, we use $ I_{+}^{L}(z) , I_{-}^{L}(z) $ and $I_{0}^{L}(z)$ defined in Proposition \ref{dd_F1}.
	Suppose $ u_{L} \neq \psi_{L-1} ( \theta, u_{1}, \dots, u_{L-1} ) $, then $ I_{+}^{L}(z) \cup I_{-}^{L}(z)  \neq \emptyset $. Let $ \bar{z}:=( \bar{\theta}^{\top},  \bar{u}_{1}^{\top}, \dots, \bar{u}_{L}^{\top} )^{\top} $ with
	\begin{align*}
		&\bar{\theta}:=\theta, ~
		\bar{u}_{1} := u_{1}, ~ \dots, ~ \bar{u}_{L-1} := u_{L-1}, \\
		& \bar{u}_{L} : = \psi_{L-1} ( \theta, u_{1}, \dots, u_{L-1} ) \neq u_{L},
	\end{align*}
	and $ d:= \bar{z} -z= ( \mathbf{0}, \dots, \mathbf{0}, d_{u_{L}} ) \neq \mathbf{0} $. Then it follows from Proposition \ref{dd_F1} that
	\begin{align}
		\Theta^{\prime}( z;d )
		&= F^{\prime}(u;d_{u} ) + \beta_{L} \left(
		\sum_{i \in I_{+}^{L}(z) } [ d_{u_{L}} ]_{i}  -  \sum_{i \in I_{-}^{L}(z) } [ d_{u_{L}} ]_{i} + \sum_{i \in I_{0}^{L}(z) } | [ d_{u_{L}} ]_{i} |
		\right) \notag \\
		&= g^{\prime}(u;d_{u} ) - \beta_{L}  \| d_{u_{L}} \|_{1} \label{eq:4-0.5} \\
		&\leq ( K_{g}  - \beta_{L} ) \| d_{u_{L}} \|_{1}< 0, \notag
	\end{align}
	where the two equalities use the definitions of $d$ and $ I_{+}^{L}(z), I_{-}^{L}(z), I_{0}^{L}(z)  $, the first inequality holds due to \eqref{eq:Lip} at $ \bar{d}=\mathbf{0} $ and $ \|\cdot\| \leq \|\cdot\|_{1} $, and the last inequality follows from \eqref{eq:br-threshold-2} and $ d_{u_{L}} \neq \mathbf{0} $. However, it contradicts $ \Theta^{\prime}(z;d)\geq 0$ for all $  d \in \R^{\bar{N}} $. Hence $ u_{L} = \psi_{L-1} ( \theta, u_{1}, \dots, u_{L-1} ) $.
	
	For any $ \ell=L-1,\dots,1 $, we next show $ u_{\ell} = \psi_{\ell-1} ( \theta, u_{1}, \dots, u_{\ell-1} ) $ using $ I_{+}^{\ell}(z) $, $ I_{-}^{\ell}(z) $ and $I_{0}^{\ell}(z)$ in Proposition \ref{dd_F1}. Suppose $ u_{\ell} \neq \psi_{\ell-1} ( \theta, u_{1}, \dots, u_{\ell-1} ) $, then $ I_{+}^{\ell}(z) \cup I_{-}^{\ell}(z) \neq \emptyset $. Let $ \bar{z}:=( \bar{\theta}^{\top},  \bar{u}_{1}^{\top}, \dots, \bar{u}_{L}^{\top} )^{\top} $ with
	\begin{align*}
		&\bar{\theta}:=\theta, ~
		\bar{u}_{1} := u_{1}, ~ \dots, ~ \bar{u}_{\ell-1} := u_{\ell-1}, \\
		& \bar{u}_{\ell} = \psi_{\ell-1} ( \theta, u_{1}, \dots, u_{\ell-1} ) \neq u_{\ell}, \\
		& \bar{u}_{\ell+1}:= u_{\ell+1} + \psi_{\ell}^{\prime} ( \theta, u_{1}, \dots, u_{\ell}; \mathbf{0}, \dots, \mathbf{0}, \bar{u}_{\ell} - u_{\ell}  ), \\
		& \vdots \\
		& \bar{u}_{L}:= u_{L} + \psi_{L-1}^{\prime} ( \theta, u_{1}, \dots, u_{L-1}; \mathbf{0}, \dots, \mathbf{0}, \bar{u}_{\ell} - u_{\ell} , \dots , \bar{u}_{L-1} - u_{L-1}  ),
	\end{align*}
	and $ d:= \bar{z} -z= ( \mathbf{0}, \dots, \mathbf{0}, d_{ u_{\ell} }, \dots, d_{u_{L}} ) \neq \mathbf{0} $. Then it can be checked that
	\begin{equation}
		\label{eq:4-0}
		\begin{aligned}
			d_{u_{\ell+1}} = \psi_{\ell}^{\prime} ( \theta,  \mathbf{u}_{\ell}; d_{\theta} ,   d_{ \mathbf{u}_{\ell} }  ), ~
			\dots, ~
			d_{u_{L}} = \psi_{L-1}^{\prime} ( \theta, \mathbf{u}_{L-1}; d_{\theta} ,  d_{ \mathbf{u}_{L-1} }  ).
		\end{aligned}
	\end{equation}
	Together with Proposition \ref{dd_F1}, it implies that
	\begin{align}
		\Theta^{\prime}( z;d )
		&= F^{\prime}(u;d_{u} )  + \beta_{\ell}  \left(
		\sum_{i \in I_{+}^{\ell}(z) } [ d_{u_{\ell}} ]_{i}  -  \sum_{i \in I_{-}^{\ell}(z) } [ d_{u_{\ell}} ]_{i} + \sum_{i \in I_{0}^{\ell}(z) } | [ d_{u_{\ell}} ]_{i} | \right) \notag \\
		&= g^{\prime}(u;d_{u} )  - \beta_{\ell} \| d_{u_{\ell}} \|_{1} \notag \\
		&\leq K_{g} \|d_{u} \| - \beta_{\ell} \| d_{u_{\ell}} \|_{1}, \label{eq:4-1}
	\end{align}
	where the two equalities use the definitions of $d$ and $ I_{+}^{\ell}(z), I_{-}^{\ell}(z), I_{0}^{\ell}(z)  $, the inequality holds due to \eqref{eq:Lip} at $ \bar{d}=\mathbf{0} $. Next we give an upper bound of $ \| d_{u} \| $ by estimating $ \{ d_{u_{j}} , j \in [L] \}$. Since $ d_{u_{1}}= \mathbf{0}, \dots, d_{u_{\ell-1}}= \mathbf{0} $, we only need to analyze $ \{ d_{u_{j}} , j = \ell,\dots,L \}$.
	For $j=\ell$, it follows from the definitions of $ d_{u_{\ell}} $ and $ \bar{u}_{\ell} $ that
	\begin{align}
		\label{eq:4-2}
		\| d_{u_{\ell}} \| =  \| u_{\ell} - \psi_{\ell-1} ( \theta, u_{1}, \dots,u_{\ell-1} ) \| \neq  0.
	\end{align}
	For $ j=\ell+1 $, it follows from \eqref{eq:4-0} that
	\begin{align}
		\label{eq:4-3}
		\| d_{u_{\ell+1}} \|
		&= \| \psi_{\ell}^{\prime} ( \theta, \mathbf{u}_{\ell}; d_{\theta} ,   d_{ \mathbf{u}_{\ell} }  ) \|
		\leq K_{\ell} (   \| d_{u_{1}}\| +  \dots +  \| d_{ u_{\ell} } \| )
		= K_{\ell} \| d_{ u_{\ell} } \|,
	\end{align}
	where the inequality holds due to \eqref{eq:Lip} at $ \bar{d}=\mathbf{0}, d_{\theta}=\mathbf{0} $,
	the last equality uses $ d_{u_{1}}= \mathbf{0}, \dots, d_{u_{\ell-1}}= \mathbf{0} $. And for $ j\geq \ell+2 $, assume that
	\begin{align}\label{eq:4-4}
		\|d_{k}\| \leq \left( K_{k-1}\prod_{i=\ell+1}^{k-1} (1+ K_{i-1} ) \right) \|d_{u_{\ell}} \|
	\end{align}
	for all $ k=\ell+1, \dots, j-1 $.
	Then we can obtain that \eqref{eq:4-4} also holds at $ k=j $:
	\begin{align}
		\|d_{u_{j}}\|
		&= \| \psi_{j-1}^{\prime} ( \theta, \mathbf{u}_{j-1}; d_{\theta} ,   d_{ \mathbf{u}_{j-1} }  ) \| \notag  \\
		&\leq   K_{j-1} (   \| d_{u_{1}}\| +  \dots +  \| d_{ u_{j-1} } \| ) \label{eq:4-4+} \\
		&=  K_{j-1} (   \| d_{u_{\ell}}\|+ \| d_{u_{\ell+1}}\| +  \dots +  \| d_{ u_{j-1} } \| ) \notag \\
		& \leq \left( K_{j-1}\prod_{i=\ell+1}^{j-1} (1+ K_{i-1} ) \right ) \|d_{u_{\ell}} \| ,\notag
	\end{align}
	where the first inequality holds due to \eqref{eq:Lip} at $ \bar{d}=\mathbf{0}, d_{\theta}=\mathbf{0} $,
	the second equality uses $  d_{u_{1}}= \mathbf{0}, \dots, d_{u_{\ell-1}}= \mathbf{0} $, and the second inequality uses \eqref{eq:4-4} at $ k=\ell+1, \dots, j-1 $. By induction and \eqref{eq:4-3}, it implies that \eqref{eq:4-4} holds for all $ k=\ell+1,\dots,L $. Plugging these upper bounds for $ \{ \| d_{u_{j}} \| , j \in [L] \}$ into \eqref{eq:4-1}, we have
	\begin{align*}
		\Theta^{\prime}( z;d )
		&\leq K_{g} ( \|d_{u_{\ell}} \| + \|d_{u_{\ell+1}} \| + \dots + \| d_{u_{L}} \| ) -\beta_{\ell} \|d_{u_{\ell}} \| \\
		&\leq \left( K_{g} \left( 1 + K_{\ell}
		+ \dots +  K_{L-1}\prod_{i=\ell+1}^{L-1} (1+ K_{i-1} )  \right) - \beta_{\ell}  \right) \|d_{u_{\ell}} \| \\
		&\leq  \left( K_{g} \prod_{i=\ell+1}^{L} (1+ K_{i-1} )    - \beta_{\ell}  \right) \|d_{u_{\ell}} \| \\
		& <0,
	\end{align*}
	where the last inequality uses \eqref{eq:br-threshold-2} and \eqref{eq:4-2}. However, it contradicts $ \Theta^{\prime}(z;d) \geq 0$ for all $ d \in \R^{\bar{N}} $. Hence, $ u_{\ell}=\psi_{\ell-1} ( \theta,u_{1}, \dots,u_{\ell-1} ) $, which yields the result due to the arbitrariness of $\ell$. 
\end{proof}

In general, condition \eqref{eq:br-threshold-2} can be satisfied under $\beta_{1}= \dots  =\beta_{L}=\beta $ with sufficiently large $\beta>0$, since $K_g$ and $ \{ K_{\ell}, \ell \in [L-1] \} $ defined in \eqref{def:Lipg} and \eqref{def:Lips} are non-increasing when $\beta$ is increasing.
For certain applications in machine learning, such as training process of RNNs to be shown in Section \ref{sec4}, Lipschitz moduli $K_{g}$ and $ \{ K_{\ell}, \ell\in [L-1] \} $ satisfying \eqref{eq:Lip} on $ lev_{\leq \bar{\gamma} } \Theta $ are easy to estimate (see \eqref{eq:5-1}-\eqref{eq:5-5}), which provides computable thresholds for $ \{ \beta_{\ell} , \ell \in [L] \} $.
Based on Lemma \ref{lem:D1-F0}, we could show the equivalence of \eqref{eq} and \eqref{l1pen} in terms of global optimality and d-stationarity.
\begin{theorem}\label{th:P0=P1}
	Under Assumption \ref{as1}, set $ \{ \beta_{\ell} >0, \ell \in [L] \} $ satisfying \eqref{eq:br-threshold-2}. Then
	\begin{itemize}
		\item[(a)] $ \mathcal{S}_{0} = \mathcal{S}_{1} $;
		\item[(b)] for any $z \in lev_{\leq \bar{\gamma}} \Theta $ with $\bar{\gamma}$ defined in \eqref{def:gammabar}, $z$ is a d-stationary point of \eqref{eq} if and only if it is a d-stationary point of \eqref{l1pen}.
	\end{itemize}
\end{theorem}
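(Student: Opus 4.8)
The plan is to exploit two structural facts: on the feasible set $\mathcal{F}_0$ the penalty term of $\Theta$ vanishes so that $\Theta=F$ there, and by Lemma \ref{lem:D1-F0} every d-stationary point of \eqref{l1pen} lying in $lev_{\leq\bar{\gamma}}\Theta$ is automatically feasible. For part (a), I would first pick any $z^*\in\mathcal{S}_1$ (nonempty by Lemma \ref{lem:bd-opt-1}). Since $z^*$ globally minimizes $\Theta$ over $\R^{\bar{N}}$ it is a local minimizer, so Lemma \ref{lem:1op} applied with $\mathcal{F}=\R^{\bar{N}}$ (whose tangent cone is all of $\R^{\bar{N}}$) gives $z^*\in\mathcal{D}_1$; moreover $\Theta(z^*)\le\Theta(z^0)=\bar{\gamma}$, so Lemma \ref{lem:D1-F0} yields $z^*\in\mathcal{F}_0$ and hence $\Theta(z^*)=F(z^*)$. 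Comparing values then closes both inclusions: for any $z\in\mathcal{F}_0$ we have $F(z^*)=\Theta(z^*)\le\Theta(z)=F(z)$, so $z^*\in\mathcal{S}_0$; conversely any $\bar{z}\in\mathcal{S}_0$ (nonempty by Lemma \ref{lem:bd-opt-0}) satisfies $\Theta(\bar{z})=F(\bar{z})\le F(z^*)=\min\Theta$, forcing $\bar{z}\in\mathcal{S}_1$.

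For part (b), the key computational observation is that at a feasible $z\in\mathcal{F}_0$ all residuals vanish, so $I_+^\ell(z)=I_-^\ell(z)=\emptyset$ and $I_0^\ell(z)=[N_\ell]$, and Proposition \ref{dd_F1} collapses to $\Theta'(z;d)=F'(z;d)+\sum_{\ell=1}^L\beta_\ell\|d_{u_\ell}-\psi_{\ell-1}'(\theta,\mathbf{u}_{\ell-1};d_\theta,d_{\mathbf{u}_{\ell-1}})\|_1$. The implication from \eqref{l1pen} to \eqref{eq} is then immediate: if $z$ is d-stationary for \eqref{l1pen} and $z\in lev_{\leq\bar{\gamma}}\Theta$, then $z\in\mathcal{F}_0$ by Lemma \ref{lem:D1-F0}, and for any $d\in\mathcal{T}_{\mathcal{F}_0}(z)$ the defining equalities of the tangent cone in Theorem \ref{th:T-express-2} annihilate every penalty term, giving $F'(z;d)=\Theta'(z;d)\ge0$; hence $z\in\mathcal{D}_0$.

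The hard direction is \eqref{eq}$\Rightarrow$\eqref{l1pen}, which is where the threshold \eqref{eq:br-threshold-2} must enter. Given a d-stationary point $z$ of \eqref{eq} and an arbitrary $d\in\R^{\bar{N}}$, I would introduce the tangential projection $\tilde{d}$ defined by $\tilde{d}_\theta=d_\theta$ and $\tilde{d}_{u_\ell}=\psi_{\ell-1}'(\theta,\mathbf{u}_{\ell-1};\tilde{d}_\theta,\tilde{d}_{\mathbf{u}_{\ell-1}})$ recursively, so that $\tilde{d}\in\mathcal{T}_{\mathcal{F}_0}(z)$ by Theorem \ref{th:T-express-2} and $F'(z;\tilde{d})\ge0$. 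Writing $r_\ell:=d_{u_\ell}-\psi_{\ell-1}'(\theta,\mathbf{u}_{\ell-1};d_\theta,d_{\mathbf{u}_{\ell-1}})$ and $e_\ell:=\|d_{u_\ell}-\tilde{d}_{u_\ell}\|$, I would use $\tilde{d}_\theta=d_\theta$ together with the Lipschitz estimate \eqref{eq:Lip} for $g'$ to get $F'(z;d)\ge F'(z;\tilde{d})-K_g\|d_u-\tilde{d}_u\|\ge-K_g\sum_\ell e_\ell$, and the Lipschitz estimate for $\psi_{\ell-1}'$ to derive $e_1=\|r_1\|$ and $e_\ell\le K_{\ell-1}\sum_{j<\ell}e_j+\|r_\ell\|$. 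The main obstacle is the bookkeeping of this cascade: for the partial sums $S_\ell:=\sum_{j\le\ell}e_j$ one has $S_\ell\le(1+K_{\ell-1})S_{\ell-1}+\|r_\ell\|$, which unrolls by induction to $\sum_\ell e_\ell\le\sum_{\ell=1}^L\big(\prod_{j=\ell+1}^L(1+K_{j-1})\big)\|r_\ell\|$, precisely the product dominated by \eqref{eq:br-threshold-2}. Combining these estimates with $\|\cdot\|\le\|\cdot\|_1$ gives $\Theta'(z;d)\ge\sum_\ell\beta_\ell\|r_\ell\|_1-K_g\sum_\ell e_\ell>0$ whenever some $r_\ell\ne0$, while if all $r_\ell=0$ then $d=\tilde{d}\in\mathcal{T}_{\mathcal{F}_0}(z)$ and $\Theta'(z;d)=F'(z;d)\ge0$; either way $z\in\mathcal{D}_1$. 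This is the same cascading Lipschitz argument as in Lemma \ref{lem:D1-F0}, now used to compare an arbitrary $d$ with its tangential projection rather than a feasible point with a perturbation.
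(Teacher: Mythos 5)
Your proposal is correct and follows essentially the same route as the paper: part (a) via $\mathcal{S}_1\subseteq\mathcal{F}_0$ (Lemmas \ref{lem:1op} and \ref{lem:D1-F0}) plus $\Theta=F$ on $\mathcal{F}_0$, and part (b) via the collapsed formula for $\Theta'$ at feasible points, the tangential projection $\bar{d}$ of an arbitrary direction, and the cascading Lipschitz estimate yielding $\sum_\ell e_\ell\le\sum_\ell\bigl(\prod_{j=\ell+1}^L(1+K_{j-1})\bigr)\|r_\ell\|$ so that \eqref{eq:br-threshold-2} forces $\Theta'(z;d)>0$ off the tangent cone. The only (cosmetic) difference is that you run the induction on the partial sums $S_\ell$ rather than on the individual terms $\|d_{u_\ell}-\bar{d}_{u_\ell}\|$ as in \eqref{eq:4-6}, arriving at the same bound.
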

\begin{proof}
	$(a)$.  For any $ z \in \mathcal{S}_{1} $, it follows from Lemma \ref{lem:1op} that $ \Theta^{\prime}(z;d) \geq 0 $ for all $ d \in \R^{\bar{N}} $, i.e., $z$ is a d-stationary point of \eqref{l1pen}. Together with Lemma \ref{lem:D1-F0} and $ \Theta(z) \leq \Theta( z^{0} ) = \bar{\gamma} $ from the global optimality of $z$, it implies that $ z \in \mathcal{F}_{0} $. Hence, $  \mathcal{S}_{1} \subseteq \mathcal{F}_{0} $. Since $  \mathcal{S}_{1} \neq    \emptyset  $, we further have
	\begin{align*}
		\mathcal{S}_{1}
		= \argmin_{ z \in \R^{\bar{N}} } \Theta(z)
		= \argmin_{ z \in \mathcal{F}_{0} } \Theta(z)
		= \argmin_{ z \in \mathcal{F}_{0} } F(z)
		= \mathcal{S}_{0}.
	\end{align*}
	
	$ (b) $. We first show that any d-stationary point of \eqref{l1pen} in $ lev_{\leq \bar{\gamma}} \Theta $ must be a d-stationary point for \eqref{eq}. Firstly, it follows from Lemma \ref{lem:D1-F0} that $ z \in \mathcal{F}_{0} $. Hence, it follows from Proposition \ref{dd_F1} and Theorem \ref{th:T-express-2} that for any $ d\in \mathcal{T}_{\mathcal{F}_{0}} ( z ) $,
	\begin{align*}
		F^{\prime}(z;d) = \Theta^{\prime} ( z;d ) \geq 0,
	\end{align*}
	where the inequality comes from $ \Theta^{\prime}(z;d) \geq 0$ for all $  d $. 
	
	Next we will show the reverse implication: any d-stationary point $z$  of \eqref{eq} with $ \Theta(z) \leq  \bar{\gamma} $ is also a d-stationary point of \eqref{l1pen}. Firstly, it follows from $ z \in \mathcal{F}_{0} $ that $ I_{+}^{\ell}(z) = I_{-}^{\ell}(z)=\emptyset $ for all $
	\ell \in [L] $, which are defined in Proposition \ref{dd_F1}. It simplifies \eqref{dd1} as
	\begin{equation}
		\begin{aligned}
			\Theta^{\prime}(z;d)
			= \, & F^{\prime}(z;d) + \sum_{\ell=1}^{L} \beta_{\ell} \| d_{u_{\ell}} - \psi_{\ell-1}^{\prime}( \theta, \mathbf{u}_{\ell-1} ; d_{\theta}, d_{\mathbf{u}_{\ell-1}} ) \|_{1}  .
		\end{aligned}
		\label{eq:D0=D1-1}
	\end{equation}
	Together with Theorem \ref{th:T-express-2}, it further implies that for any $ d \in \mathcal{T}_{\mathcal{F}_{0} }(z) $,
	\begin{align}
		\Theta^{\prime}(z;d)
		=   F^{\prime}(z;d)
		\geq  0, \label{eq:D0=D1-2}
	\end{align}
	where the inequality holds since $z$ is the d-stationary point of \eqref{eq}. 
	For any $ d \notin \mathcal{T}_{\mathcal{F}_{0} }(z) $, we can construct a direction $\bar{d}$ as follows: set $ \bar{d}_{\theta}:= d_{\theta} $ and
	$ \bar{d}_{u_{\ell}}:= \psi_{\ell-1}^{\prime} ( \theta, u_{1},   \dots,  u_{\ell-1} ; \bar{d}_{\theta}, \bar{d}_{u_{1}}, \dots , \bar{d}_{u_{\ell-1}} ) $
	in the order of $ \ell=1,\dots, L $. Then by Theorem \ref{th:T-express-2}, we have $ \bar{d} \in \mathcal{T}_{\mathcal{F}_{0} }(z) $. Hence, it follows from \eqref{eq:D0=D1-1} and \eqref{eq:D0=D1-2} that
	\begin{align}
		\Theta^{\prime}(z;d)
		= \, & \Theta^{\prime}(z;d) - \Theta^{\prime}(z;\bar{d}) + \Theta^{\prime}(z;\bar{d}) \notag \\
		\geq \, & \Theta^{\prime}(z;d) - \Theta^{\prime}(z;\bar{d}) \notag \\
		= \, & F^{\prime}(z;d) - F^{\prime}(z;\bar{d})
		+  \sum_{\ell=1}^{L} \beta_{\ell} \| d_{u_{\ell}} - \psi_{\ell-1}^{\prime}( \theta, \mathbf{u}_{\ell-1} ; d_{\theta},   d_{\mathbf{u}_{\ell-1}} ) \|_{1}. \label{eq:D0=D1-3}
	\end{align}
	Next we will show that the right-hand side of \eqref{eq:D0=D1-3} is nonnegative.
	Note that
	\begin{align}\label{eq:4-4.5}
		F^{\prime}(z;d) - F^{\prime}(z;\bar{d})
		= g^{\prime}(u;d_{u}) - g^{\prime}(u;\bar{d}_{u})
		\geq -K_{g} \sum_{\ell=1}^{L} \| d_{u_{\ell}} - \bar{d}_{u_{\ell}} \|,
	\end{align}
	where the equality follows from $ \bar{d}_{\theta} = d_{\theta} $ and \eqref{dd0}, and the inequality uses \eqref{eq:Lip}. We only need to estimate $ \{  \| d_{u_{\ell}} - \bar{d}_{u_{\ell}} \| , \ell \in [L] \} $ by induction in the following.
	For $\ell=1$, it follows from the definition of $ \bar{d}_{u_{1}} $ and $ \bar{d}_{\theta} = d_{\theta} $ that
	\begin{align}\label{eq:4-5}
		\| d_{u_{1}} - \bar{d}_{u_{1}} \| = \| d_{u_{1}} - \psi_{0}^{\prime} ( \theta;d_{\theta} ) \|.
	\end{align}
	For $ \ell=2,\dots,L $, assume that
	\begin{align}
		\label{eq:4-6}
		&\|d_{u_{j}}- \bar{d}_{u_{j}} \| \\
		\leq \, & \|d_{u_{j}} -  \psi_{j-1}^{\prime}( \theta, \mathbf{u}_{j-1} ; d_{\theta} ,  d_{ \mathbf{u}_{j-1} } ) \| \notag \\
		& + K_{j-1} \sum_{k=1}^{j-1}  \left[ \prod_{i=k+1}^{j-1} ( 1+K_{i-1} )  \right]
		\| d_{u_{k}} - \psi_{k-1}^{\prime} ( \theta, \mathbf{u}_{k-1}; d_{\theta},   d_{\mathbf{u}_{k-1}} ) \| \notag
	\end{align}
	holds for all $ j=1,\dots,\ell-1 $. Then we can deduce that \eqref{eq:4-6} also holds at $j=\ell$:
	\begin{align}
		&\|d_{u_{\ell}}- \bar{d}_{u_{\ell}} \| \notag \\
		= \, &  \|d_{u_{\ell}} -  \psi_{\ell-1}^{\prime}( \theta, \mathbf{u}_{\ell-1} ; \bar{d}_{\theta} ,   \bar{d}_{ \mathbf{u}_{\ell-1} } ) \| \notag\\
		\leq \, &  \|d_{u_{\ell}} -  \psi_{\ell-1}^{\prime}( \theta, \mathbf{u}_{\ell-1} ; d_{\theta} ,   d_{ \mathbf{u}_{\ell-1} } ) \|  \notag\\
		&+ K_{\ell-1} (   \| d_{u_{1}} - \bar{d}_{u_{1}} \| + \dots  + \| d_{u_{\ell-1}} - \bar{d}_{u_{\ell-1}} \| ) \label{eq:4-6.5} \\
		\leq \, & \|d_{u_{\ell}} -  \psi_{\ell-1}^{\prime}( \theta, \mathbf{u}_{\ell-1} ; d_{\theta} ,   d_{ \mathbf{u}_{\ell-1} } ) \| \notag \\
		&+ K_{\ell-1}
		\begin{pmatrix}
			\|d_{u_{1}}- \psi_{0}^{\prime}( \theta;d_{\theta} ) \| \cdot [  1 + K_{1} + \cdots+ K_{\ell-2} \prod_{i=2}^{\ell-2} ( 1+K_{i-1} ) ]\\
			+\dots + \|d_{u_{\ell-1}} -  \psi_{\ell-2}^{\prime}( \theta, \mathbf{u}_{\ell-2} ; d_{\theta} ,  d_{ \mathbf{u}_{\ell-2} } ) \|
		\end{pmatrix}\notag\\
		= \, & \|d_{u_{\ell}} -  \psi_{\ell-1}^{\prime}( \theta, \mathbf{u}_{\ell-1} ; d_{\theta} ,  d_{ \mathbf{u}_{\ell-1} } ) \|  \notag\\
		&+ K_{\ell-1} \cdot
		\begin{pmatrix}
			\|d_{u_{1}}- \psi_{0}^{\prime}( \theta;d_{\theta} ) \| \cdot \prod_{i=2}^{\ell-1} ( 1+K_{i-1} ) +\dots\\
			+ \|d_{u_{\ell-1}} -  \psi_{\ell-2}^{\prime}( \theta, \mathbf{u}_{\ell-2} ; d_{\theta} ,   d_{ \mathbf{u}_{\ell-2} } ) \|
		\end{pmatrix} , \notag
	\end{align}
	where the first equality comes from the definition of $ \bar{d}_{u_{\ell}} $, the first inequality uses $ d_{\theta} = \bar{d}_{\theta} $ and \eqref{eq:Lip}, the last inequality follows from \eqref{eq:4-6} at $ j=1,\dots,\ell-1 $.
	Together with \eqref{eq:4-5}, it implies that \eqref{eq:4-6} holds for all $ \ell\in [L] $. Plugging these upper bounds for $ \{ \|d_{u_{\ell}} - \bar{d}_{ u_{\ell} }  \|, \ell \in [L]  \} $ into \eqref{eq:4-4.5}, we have
	\begin{align*}
		&F^{\prime}(z;d)  -  F^{\prime}(z;\bar{d}) \\
		\geq   & - K_{g}
		\begin{pmatrix}
			\|d_{u_{1}}- \psi_{0}^{\prime}( \theta;d_{\theta} ) \| \cdot [  1 + K_{1} + \cdots+ K_{L-1 } \prod_{i=2}^{L-1} ( 1+K_{i-1} ) ]\\
			+\dots + \|d_{u_{L }} -  \psi_{L-1 }^{\prime}( \theta, \mathbf{u}_{L-1} ; d_{\theta} ,  d_{ \mathbf{u}_{L-1} } ) \|
		\end{pmatrix} \\
		= & - K_{g} \sum_{\ell=1}^{L} \left[ \prod_{j=\ell+1}^{L} (1+ K_{j-1}) \right]
		\| d_{u_{\ell }} -  \psi_{\ell-1 }^{\prime}( \theta, \mathbf{u}_{\ell-1} ; d_{\theta} ,  d_{ \mathbf{u}_{\ell-1} } )\|.
	\end{align*}
	Together with \eqref{eq:D0=D1-3}, it implies that
	\begin{align}
		\Theta^{\prime}(z;d)
		\geq \, &\sum_{\ell=1}^{L} \left( \beta_{\ell} - K_{g}  \left[ \prod_{j=\ell+1}^{L} (1+ K_{j-1}) \right] \right)
		\cdot  \| d_{u_{\ell }} -  \psi_{\ell-1 }^{\prime}( \theta, \mathbf{u}_{\ell-1} ; d_{\theta} ,  d_{ \mathbf{u}_{\ell-1} } )\| \notag \\
		> \, & 0, \label{eq:4-7}
	\end{align}
	where the last inequality uses \eqref{eq:br-threshold-2}, $d \notin \mathcal{T}_{\mathcal{F}_{0}}(z) $ and Theorem \ref{th:T-express-2}. Therefore, it yields that $ \Theta^{\prime}(z;d) \geq 0 $ for all $d$, meaning that $z$ is a d-stationary point of \eqref{l1pen}. 
\end{proof}

\begin{remark}\label{remark4.1}
	Theorem \ref{th:P0=P1} is different from Theorem 2.1 of \cite{DNN-CHP} in two aspects.
	\begin{itemize}
		\item[(i)] Theorem 2.1 of \cite{DNN-CHP} only obtains one-sided implication that a d-stationary point of the penalty problem must be a d-stationary point of the original problem, while we have the equivalence. As a consequence, the penalization preserves all the d-stationary points of \eqref{eq:1.1} and \eqref{eq}. And when $ g, \{\psi_{\ell-1},\ell \in [L] \} $ are smooth or have DC structures of the pointwise max type \cite[Condition C2]{nMM-CPS}, d-stationary points of \eqref{l1pen} can be obtained by trust region methods \cite{Y141} or majorization minimization frameworks \cite{nMM-CPS}.
		
		\item[(ii)] Motivated by \cite[Theorem 2.5]{LRP-LLC} and \cite[Lemma 9]{LReLU-LLC}, we replace the boundedness requirement of $z$ with $ \Theta(z) \leq \Theta( z^{0}) $, which is easier to check since $\Theta( z^{0})$ is easy to calculate. And the condition $ \Theta(z) \leq \Theta( z^{0}) $ also helps to obtain the threshold-like conditions expressed by $ \{K_{\ell}, \ell\in[L-1]  \} $ and  $K_{g} $, which provides the relations between penalty thresholds and the number of layers $ L $.
	\end{itemize}
\end{remark}

\subsection{Second-order d-stationarity} \label{sec3.3}
Here we compare the sets of second-order d-stationary points $ \mathcal{SD}_{0} $ and $ \mathcal{SD}_{1} $ of \eqref{eq} and \eqref{l1pen} to exhibit their differences in depicting second-order necessary conditions for \eqref{eq:1.1}.

First of all, the idea of using second-order conditions of reformulated problems to characterize optimality conditions of the original problem is motivated by \cite[Proposition 9.4.2]{CP-book} for \eqref{eq:1.1} and \eqref{l1pen} with $L=1$, i.e., $ \min_{\theta} h(G(\theta)) $ with $ G(\theta):= ( \psi_{0}(\theta)^{\top} , \lambda \|\theta \|^{2} )^{\top}$ and $ h(y):= g( [y]_{1:N_{1}} ) + [y]_{(N_{1} + 1) } $.
The second-order necessary conditions in (9.41) of \cite[Proposition 9.4.2]{CP-book} can be reorganized as: if $ \theta $ is a local minimizer of \eqref{eq:1.1}, then for all $ \rho>\max\{ \overline{K}_{g}, 1 \} $ where $ \overline{K}_{g}>0 $ is a Lipschitz constant of $ g $ near $ u_{1} := \psi_{0}( \theta ) $,
\begin{align*}
	&g^{\prime}(u_{1}; d_{u_{1}} ) + 2 \lambda \theta^{\top} d_{\theta} 
	\geq 0
	\text{ for all } d=(d_{\theta}^{\top}, d_{u_{1}}^{\top})^{\top} \text{ with }  d_{u_{1}} = \psi_{0}^{\prime} ( \theta ; d_{\theta} ),  \\
	& \text{and } g^{(2)}(u_{1} ; d_{u_{1}} ) + 2 \lambda\rho \|d_{\theta} \|^{2}  + \rho  \|  \psi_{0}^{(2)} ( \theta ; d_{\theta} )   \|_{1} \geq 0 , \, \\
	& ~~~~~\text{ for all } d=(d_{\theta}^{\top}, d_{u_{1}}^{\top})^{\top} \text{ with }
	d_{u_{1}} = \psi_{0}^{\prime} ( \theta ; d_{\theta} ),  \,
	g^{\prime}(u_{1}; d_{u_{1}} ) + 2 \lambda \theta^{\top} d_{\theta} =0,
\end{align*}
which is actually covered by our second-order necessary conditions in constructing $ \mathcal{SD}_{1} $ 
since for any $ z=( \theta^{\top} , u_{1}^{\top} )^{\top} $ with $u_{1} = \psi_{0}( \theta )$, 
\begin{itemize}
	\item for any $ d=(d_{\theta}^{\top}, d_{u_{1}}^{\top})^{\top} $ with $ d_{u_{1}} = \psi_{0}^{\prime} ( \theta ; d_{\theta} ) $, it follows from Proposition \ref{dd_F1} that $ \Theta^{\prime}(z;d)  = g^{\prime}(u_{1}; d_{u_{1}} ) + 2 \lambda \theta^{\top} d_{\theta} $;
	\item it follows from Proposition \ref{dd_F1} that
	\begin{align*}
		\{ d \mid d_{u_{1}} = \psi_{0}^{\prime} ( \theta ; d_{\theta} ),
		g^{\prime}(u_{1}; d_{u_{1}} ) + 2 \lambda \theta^{\top} d_{\theta} =0 \}
		\subseteq
		\{ d \mid \Theta^{\prime}(z;d) =0  \};
	\end{align*}
	\item for any $ d  $ with $ d_{u_{1}} = \psi_{0}^{\prime} ( \theta ; d_{\theta} )$ and $ g^{\prime}(u_{1}; d_{u_{1}} ) + 2 \lambda \theta^{\top} d_{\theta} =0 $, it follows from Proposition \ref{2dd_F1} with the setting of $ \beta_{1}=\rho $ and $ \rho>\max \{ \overline{K}_{g},1 \} $ that
	\begin{align*}
		\Theta^{(2)}( z; d )&= g^{(2)}(u_{1} ; d_{u_{1}} ) + 2 \lambda \|d_{\theta} \|^{2}  + \rho  \|  \psi_{0}^{(2)} ( \theta ; d_{\theta} )   \|_{1}  \\
		&< g^{(2)}(u_{1} ; d_{u_{1}} ) + 2 \lambda\rho \|d_{\theta} \|^{2}  + \rho  \|  \psi_{0}^{(2)} ( \theta ; d_{\theta} )   \|_{1}.
	\end{align*}
\end{itemize}
Hence, we will focus on our second-order necessary conditions for \eqref{eq} and \eqref{l1pen} rather than generalizing (9.41) of \cite[Proposition 9.4.2]{CP-book} to the case of \eqref{eq:1.1}.

By the results in Sections \ref{sec3.1} and \ref{sec3.2}, the second-order necessary conditions of \eqref{eq} and \eqref{l1pen} specified in Definition \ref{def:1op} are both able to characterize solutions of \eqref{eq:1.1}.
It follows from Lemma \ref{lem:1op} and Theorem \ref{th:P0=P1} that
\begin{align*}
	\begin{matrix}
		\mathcal{S}_{0} & \subseteq & \mathcal{SD}_{0} \cap lev_{\leq \bar{\gamma}} \Theta & \subseteq&  \mathcal{D}_{0} \cap lev_{\leq \bar{\gamma}} \Theta  \\
		|| & & & & ||  \\
		\mathcal{S}_{1} & \subseteq & \mathcal{SD}_{1} \cap lev_{\leq \bar{\gamma}} \Theta & \subseteq&  \mathcal{D}_{1} \cap lev_{\leq \bar{\gamma}} \Theta.
	\end{matrix}
\end{align*}
Together with the bijection between $ \mathcal{S} $ and $ \mathcal{S}_{0} $ (discussed after Lemma \ref{lem:bd-opt-0}), it implies that for any $ \theta \in \mathcal{S} $, the point $ z=(\theta^{\top}, u_{1}^{\top}, \dots , u_{L}^{\top}  )^{\top} $ must belong to $ \mathcal{SD}_{0} \cap lev_{\leq \bar{\gamma}} \Theta $ and $  \mathcal{SD}_{1} \cap lev_{\leq \bar{\gamma}} \Theta $ where $ u_{\ell} := \psi_{\ell-1} ( \theta , u_{1}, \dots, u_{\ell-1} ) $ for all $ \ell \in [L] $. 
Furthermore, we could find the latter condition $z \in \mathcal{SD}_{1}  \cap lev_{\leq \bar{\gamma}} \Theta$ is stronger by the following observation.
\begin{theorem}
	\label{th:SD0-SD1}
	Under Assumptions \ref{as1}, \ref{as2} and \eqref{eq:br-threshold-2},  $ \mathcal{SD}_{0} \cap lev_{\leq \bar{\gamma}} \Theta \supseteq \mathcal{SD}_{1} \cap lev_{\leq \bar{\gamma}} \Theta $.
\end{theorem}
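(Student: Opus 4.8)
The plan is to take an arbitrary $z \in \mathcal{SD}_{1} \cap lev_{\leq \bar{\gamma}}\Theta$ and show that $z \in \mathcal{SD}_{0}$; the membership $z \in lev_{\leq \bar{\gamma}}\Theta$ is then trivially preserved. First I would dispose of the first-order requirements. Since $z \in \mathcal{SD}_{1} \subseteq \mathcal{D}_{1}$ is a d-stationary point of \eqref{l1pen} lying in $lev_{\leq \bar{\gamma}}\Theta$, Lemma \ref{lem:D1-F0} gives $z \in \mathcal{F}_{0}$ and Theorem \ref{th:P0=P1}(b) gives $z \in \mathcal{D}_{0}$. It thus remains only to verify the second-order inequality defining $\mathcal{SD}_{0}$, namely $F^{(2)}(z;d) \geq 0$ for every $d \in P_{\mathcal{F}_{0}}(z)$ with $F^{\prime}(z;d)=0$.

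Fix such a direction $d \in P_{\mathcal{F}_{0}}(z)$ with $F^{\prime}(z;d)=0$. Because $P_{\mathcal{F}_{0}}(z) \subseteq \mathcal{T}_{\mathcal{F}_{0}}(z)$, Theorem \ref{th:T-express-2} yields $d_{u_{\ell}} = \psi_{\ell-1}^{\prime}(\theta,\mathbf{u}_{\ell-1};d_{\theta},d_{\mathbf{u}_{\ell-1}})$ for every $\ell \in [L]$. Since $z \in \mathcal{F}_{0}$ forces $I_{+}^{\ell}(z)=I_{-}^{\ell}(z)=\emptyset$ and $I_{0}^{\ell}(z)=[N_{\ell}]$, formula \eqref{dd1} collapses to $\Theta^{\prime}(z;d) = F^{\prime}(z;d) + \sum_{\ell=1}^{L}\beta_{\ell}\|d_{u_{\ell}}-\psi_{\ell-1}^{\prime}(\theta,\mathbf{u}_{\ell-1};d_{\theta},d_{\mathbf{u}_{\ell-1}})\|_{1} = F^{\prime}(z;d) = 0$. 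Hence $d$ lies in the set of directions over which the second-order condition of \eqref{l1pen} is imposed, so $z \in \mathcal{SD}_{1}$ delivers $\Theta^{(2)}(z;d) \geq 0$.

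The crux, and the step I expect to be the main obstacle, is to identify $\Theta^{(2)}(z;d)$ with $F^{(2)}(z;d)$; this is precisely where using the radial cone $P_{\mathcal{F}_{0}}(z)$ rather than the larger tangent cone in Definition \ref{def:1op} pays off. By definition of $P_{\mathcal{F}_{0}}(z)$ there is a sequence $\tau_{k}\downarrow 0$ with $z+\tau_{k}d \in \mathcal{F}_{0}$, i.e. $u_{\ell}+\tau_{k}d_{u_{\ell}} = \psi_{\ell-1}\bigl((\theta,\mathbf{u}_{\ell-1})+\tau_{k}(d_{\theta},d_{\mathbf{u}_{\ell-1}})\bigr)$ for all $\ell$. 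Subtracting the first-order expansion and using $u_{\ell}=\psi_{\ell-1}(\theta,\mathbf{u}_{\ell-1})$ together with $d_{u_{\ell}}=\psi_{\ell-1}^{\prime}(\cdots)$ shows that the second-order remainder of $\psi_{\ell-1}$ vanishes \emph{identically} along $\tau_{k}$. Since Assumption \ref{as2} guarantees that $\psi_{\ell-1}^{(2)}(\theta,\mathbf{u}_{\ell-1};d_{\theta},d_{\mathbf{u}_{\ell-1}})$ exists as a genuine limit over all $\tau\downarrow 0$, its value must coincide with the limit along this subsequence, namely $\mathbf{0}$, for every $\ell \in [L]$. The tangent cone would only force asymptotic feasibility and would not kill this remainder, which is exactly why the radial cone is the correct object here.

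Finally I would feed $\psi_{\ell-1}^{(2)}(\cdots)=\mathbf{0}$ into the expression for $\Theta^{(2)}$ in Proposition \ref{2dd_F1}. The equality $d_{u_{\ell}}-\psi_{\ell-1}^{\prime}(\cdots)=\mathbf{0}$ also forces $I_{0,+}^{\ell}(z;d)=I_{0,-}^{\ell}(z;d)=\emptyset$ and $I_{0,0}^{\ell}(z;d)=[N_{\ell}]$, so each penalty block reduces to $\beta_{\ell}\|\psi_{\ell-1}^{(2)}(\cdots)\|_{1}=0$, giving $\Theta^{(2)}(z;d)=F^{(2)}(z;d)$. Combined with $\Theta^{(2)}(z;d)\geq 0$ this yields $F^{(2)}(z;d)\geq 0$, and by the arbitrariness of $d$ we obtain $z \in \mathcal{SD}_{0}$. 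Since $z \in lev_{\leq\bar{\gamma}}\Theta$ throughout, this proves $\mathcal{SD}_{1}\cap lev_{\leq\bar{\gamma}}\Theta \subseteq \mathcal{SD}_{0}\cap lev_{\leq\bar{\gamma}}\Theta$.
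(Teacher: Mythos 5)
Your proposal is correct and follows essentially the same route as the paper: reduce the first-order part via Lemma \ref{lem:D1-F0} and Theorem \ref{th:P0=P1}, show that $\psi_{\ell-1}^{(2)}(\theta,\mathbf{u}_{\ell-1};d_{\theta},d_{\mathbf{u}_{\ell-1}})=\mathbf{0}$ for $d\in P_{\mathcal{F}_{0}}(z)$, and then identify $\Theta^{(2)}(z;d)$ with $F^{(2)}(z;d)$ using the inclusion of the relevant direction sets. The only cosmetic difference is that you derive the vanishing of $\psi_{\ell-1}^{(2)}$ directly from the exact feasibility along the radial sequence $\tau_{k}$, whereas the paper argues by contradiction; both rest on the same observation.
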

\begin{proof}
	By the definitions of $ \mathcal{SD}_{0} $ and $\mathcal{SD}_{1}$,
	\begin{align*}
		&\mathcal{SD}_{0} \cap lev_{\leq \bar{\gamma}} \Theta =
		\{ z \mid F^{(2)} (z;d)\geq 0, \forall d \in P_{\mathcal{F}_{0}} (z) \text{ with } F^{\prime}(z;d) = 0 \} \cap ( \mathcal{D}_{0} \cap lev_{\leq \bar{\gamma}} \Theta ), \\
		&\mathcal{SD}_{1} \cap lev_{\leq \bar{\gamma}} \Theta =
		\{ z \mid \Theta^{(2)} (z;d)\geq 0, \forall d  \text{ with } \Theta^{\prime}(z;d) = 0 \} \cap ( \mathcal{D}_{1} \cap lev_{\leq \bar{\gamma}} \Theta ).
	\end{align*}
	Together with $ \mathcal{D}_{0} \cap lev_{\leq \bar{\gamma}} \Theta = \mathcal{D}_{1} \cap lev_{\leq \bar{\gamma}} \Theta $ from Theorem \ref{th:P0=P1}, we only need to prove that for any $z \in \mathcal{D}_{1} \cap lev_{\leq \bar{\gamma}} \Theta $ satisfying
	$
	\Theta^{(2)} (z;d)\geq 0 \text{ for all } d  \text{ with } \Theta^{\prime}(z;d) = 0,
	$
	the inequality $ F^{(2)} (z;d)\geq 0 $ holds for all $ d \in P_{\mathcal{F}_{0}} (z) $ with $ F^{\prime}(z;d) = 0 $.
	
	First we show
	\begin{align}\label{eq:3.3.1}
		\psi_{\ell-1}^{(2)}(\theta, \mathbf{u}_{\ell-1}; d_{\theta} ,   d_{\mathbf{u}_{\ell-1}} )=0 , \, \ell\in[L]
		\text{  for all } z \in \mathcal{F}_{0}, d \in P_{\mathcal{F}_{0}} (z)
	\end{align}
	by contradiction.
	If there exists $ \ell\in[L] $ and $ i \in [N_{\ell}] $, such that $ [\psi_{\ell-1}^{(2)}(\theta, \mathbf{u}_{\ell-1};   d_{\theta} ,   d_{\mathbf{u}_{\ell-1}}  ) ]_{i} >0 $, then it follows from the definition of second-order directional derivatives that for any sufficiently small positive number $ \tau $,
	\begin{align*}
		0< \, &[ \psi_{\ell-1} (\theta + \tau d_{\theta} ,   \mathbf{u}_{\ell-1}+ \tau d_{\mathbf{u}_{\ell-1}} )  ]_{i}
		-  [ \psi_{\ell-1} (\theta, \mathbf{u}_{\ell-1})  ]_{i}
		- \tau [\psi_{\ell-1}^{\prime}(\theta, \mathbf{u}_{\ell-1}; d_{\theta} ,   d_{\mathbf{u}_{\ell-1}} ) ]_{i} .
	\end{align*}
	Together with $ z \in \mathcal{F}_{0}, d \in P_{\mathcal{F}_{0}} (z) \subseteq \mathcal{T}_{ \mathcal{F}_{0} }  (z) $, it implies that
	\begin{align*}
		& [ u_{\ell} + \tau d_{u_{\ell}} ]_{i} - [ \psi_{\ell-1} (\theta + \tau d_{\theta} ,  \mathbf{u}_{\ell-1}+ \tau d_{\mathbf{u}_{\ell-1}} )  ]_{i}
		\\
		= \, &
		[ \psi_{\ell-1} (\theta, \mathbf{u}_{\ell-1})  ]_{i}
		+ \tau [\psi_{\ell-1}^{\prime}(\theta, \mathbf{u}_{\ell-1}; d_{\theta} ,   d_{\mathbf{u}_{\ell-1}} ) ]_{i}
		-[ \psi_{\ell-1} (\theta + \tau d_{\theta} ,   \mathbf{u}_{\ell-1}+ \tau d_{\mathbf{u}_{\ell-1}} )  ]_{i} \\
		< \, & 0
	\end{align*}
	for any sufficiently small positive $ \tau $, which contradicts with $  d \in P_{\mathcal{F}_{0}} (z) $. Similar contradiction appears if there exists $ \ell\in[L] $ and $ i \in [N_{\ell}] $, such that $ [\psi_{\ell-1}^{(2)}(\theta, \mathbf{u}_{\ell-1}; d_{\theta} ,    d_{\mathbf{u}_{\ell-1}} ) ]_{i}  < 0 $, which yields \eqref{eq:3.3.1}.
	
	Hence, for any $z \in \mathcal{SD}_{1} \cap lev_{\leq \bar{\gamma}} \Theta  $, we have
	\begin{align*}
		0 \leq \Theta^{(2)} (z;d)
		&= F^{(2)} (z;d) + \sum_{\ell=1}^{L} \beta_{\ell}  \| \psi_{\ell-1}^{(2)}(\theta, \mathbf{u}_{\ell-1}; d_{\theta},   d_{ \mathbf{u}_{\ell-1} }  ) \|_{1}
		= F^{(2)} (z;d)
	\end{align*}
	for all $ d \in P_{\mathcal{F}_{0}} (z)  $ with $ F^{\prime}(z;d) = 0 $, where the inequality is derived by
	\begin{align*}
		\{ d \in P_{\mathcal{F}_{0}} (z) \mid F^{\prime}(z;d) = 0 \} \subseteq
		\{ d \in \mathcal{T}_{\mathcal{F}_{0}}  (z) \mid F^{\prime}(z;d) = 0 \} \subseteq
		\{ d   \mid \Theta^{\prime}(z;d) = 0 \}
	\end{align*}
	from $z \in   \mathcal{F}_{0} $, Theorem \ref{th:T-express-2} and Proposition \ref{dd_F1}, the first equality uses $ z \in \mathcal{F}_{0} $, Theorem \ref{th:T-express-2} and Proposition \ref{2dd_F1}, and the last equality uses \eqref{eq:3.3.1}.
\end{proof} 
\begin{remark}\label{remark:SD0=SD1}
	Here we discuss the conditions under which the equality in Theorem \ref{th:SD0-SD1} holds.
	Under the premises of Theorem \ref{th:SD0-SD1}, it follows from \eqref{eq:4-7} that $ \Theta^{\prime}(z;d)>0 $ for any $ z \in \mathcal{D}_{0} \cap lev_{\leq\bar{\gamma}} \Theta = \mathcal{D}_{1} \cap lev_{\leq\bar{\gamma}} \Theta $ and any $ d \notin \mathcal{T}_{ \mathcal{F}_{0} }(z) $. Hence, for any $ z \in \mathcal{D}_{0} \cap lev_{\leq\bar{\gamma}} \Theta = \mathcal{D}_{1} \cap lev_{\leq\bar{\gamma}} \Theta $,
	\begin{align}
		\label{eq:4-8-0}
		\{d \mid  \Theta^{\prime}(z;d)=0 \}
		= \{d \in \mathcal{T}_{ \mathcal{F}_{0} }(z) \mid  \Theta^{\prime}(z;d)=0 \}
		= \{d \in \mathcal{T}_{ \mathcal{F}_{0} }(z) \mid  F^{\prime}(z;d)=0 \},
	\end{align}
	where the last equality uses $ z\in \mathcal{F}_{0} $, Theorem \ref{th:T-express-2} and Proposition \ref{dd_F1}. Together with Proposition \ref{2dd_F1} and Theorem \ref{th:T-express-2}, it implies that for any $d$ with $ \Theta^{\prime}(z;d)=0 $,
	\begin{align*}
		\Theta^{(2)}(z;d)= F^{(2)}(z;d) + \sum_{\ell=1}^{L} \beta_{\ell}  \| \psi_{\ell-1}^{(2)}(\theta, \mathbf{u}_{\ell-1}; d_{\theta},   d_{ \mathbf{u}_{\ell-1} }  ) \|_{1},
	\end{align*}
	which indicates that $ z \in \mathcal{SD}_{1} \cap lev_{\leq \bar{\gamma}} \Theta $ if and only if $  z \in \mathcal{D}_{1} \cap lev_{\leq \bar{\gamma}} \Theta $ and
	\begin{align}
		\label{eq:4-8-1}
		F^{(2)}(z;d) + \sum_{\ell=1}^{L} \beta_{\ell}  \| \psi_{\ell-1}^{(2)}(\theta, \mathbf{u}_{\ell-1}; d_{\theta},   d_{ \mathbf{u}_{\ell-1} }  ) \|_{1} \geq 0
	\end{align}
	for all $ d \in \mathcal{T}_{ \mathcal{F}_{0} }(z) $ satisfying $ F^{\prime}(z;d) = 0 $. Then it follows from \eqref{eq:3.3.1} and $ P_{\mathcal{F}_{0}}(z) \subseteq \mathcal{T}_{\mathcal{F}_{0}}(z) $ that, the equality in Theorem \ref{th:SD0-SD1} holds if and only if
	\begin{equation}
		\label{eq:4-8}
		\begin{aligned}
			& F^{(2)}(z;d) + \sum_{\ell=1}^{L} \beta_{\ell}  \| \psi_{\ell-1}^{(2)}(\theta, \mathbf{u}_{\ell-1}; d_{\theta},   d_{ \mathbf{u}_{\ell-1} }  ) \|_{1} \geq 0, \\
			& \text{for all } d \in \mathcal{T}_{ \mathcal{F}_{0} }(z) \backslash P_{\mathcal{F}_{0}}(z)
			\text{ satisfying } F^{\prime}(z;d) = 0.
		\end{aligned}
	\end{equation}
	Sufficient conditions for \eqref{eq:4-8} include the following two conditions.
	\begin{itemize}
		\item $ P_{\mathcal{F}_{0}}(z) = \mathcal{T}_{\mathcal{F}_{0}}(z) $. If $ \mathcal{F}_{0}  $ is a polyhedron or a union of finite number of polyhedrons, then $ P_{\mathcal{F}_{0}}(z) = \mathcal{T}_{\mathcal{F}_{0}}(z) $ holds for any $ z \in \mathcal{F}_{0} $. For example, under the setting of $ \psi_{0}(\theta):= a^{\top} \theta , \psi_{1}(\theta,u_{1}):=[u_{1}]_{+} $ for a vector $ a \in \R^{n} $,
		\begin{align*}
			\mathcal{F}_{0}
			&=\{ z=(\theta^{\top}, u_{1}, u_{2})^{\top} \mid u_{1} = a^{\top} \theta ,\, u_{2}=[u_{1}]_{+} \}\\
			&=\{ z \mid u_{1} = a^{\top} \theta ,\, u_{1}\geq 0,\,  u_{2}= u_{1}  \}
			\cup
			\{ z \mid u_{1} = a^{\top} \theta ,\, u_{1}\leq 0,\,  u_{2}= 0  \}
		\end{align*}
		is a union of two polyhedrons.
		\item $F$ is convex and twice directionally differentiable. Since $ F(z)=g(u)+ \lambda \|\theta \|^{2} $, $F$ is convex and twice directionally differentiable when $g$ is convex and twice directionally differentiable. In this case, \eqref{eq:4-8} naturally holds since $ F^{(2)}(z;d) = \lim_{\tau \downarrow 0 } \frac{F(z+\tau d) - F(z) - F^{\prime}(z;\tau d) }{ \tau^{2}/2 }    \geq 0 $ for all $d$.
	\end{itemize}
\end{remark}
Inspired by Remark \ref{remark:SD0=SD1}, we can provide an example where $ \mathcal{SD}_{0} \cap lev_{\leq \bar{\gamma}} \Theta \supsetneq \mathcal{SD}_{1} \cap lev_{\leq \bar{\gamma}} \Theta $ under the premises of Theorem \ref{th:SD0-SD1}.
\begin{example}\label{ex2}
	Consider \eqref{eq:1.1} with $ L=2, n=N_{1}=N_{2}=1 $ and $ \lambda=0.01 $,
	\begin{align*}
		\psi_{0}(\theta):= \theta, ~
		\psi_{1}(\theta,u_{1}):= u_{1}^{2}, ~
		g(u_{1}, u_{2}) := [-u_{1}^{2}+ 0.5 u_{2} + 0.0001]_{+}.
	\end{align*}
	On the one hand, it can be verified that $ z^{0}=(0,\psi_{0}(0),  \psi_{1}(0,\psi_{0}(0)))^{\top} = \mathbf{0} $ is a second-order d-stationary point of \eqref{eq}. Firstly, it follows from Theorem \ref{th:T-express-2}, the definition of $ P_{\mathcal{F}_{0}}(\cdot) $ and \eqref{eq:3.3.1} that $ \mathcal{T}_{\mathcal{F}_{0}}(\mathbf{0})= \{ (d_{\theta} , d_{u_{1}} , d_{u_{2}} )^{\top} \mid d_{u_{1}} = d_{\theta},\, d_{u_{2}}=0  \} $ and $ P_{\mathcal{F}_{0}}(\mathbf{0}) = \{\mathbf{0}\}  $ since $ \{\mathbf{0}\} \subseteq P_{\mathcal{F}_{0}} (\mathbf{0}) \subseteq \{ d \in \mathcal{T}_{\mathcal{F}_{0}}(\mathbf{0}) \mid 2 d_{u_{1}}^{2} =0 \} = \{\mathbf{0}\} $. Then it can be verified that $ F^{\prime}(\mathbf{0};d )= 0.5 d_{u_{2}}=0 \geq 0 $ for all $d \in \mathcal{T}_{\mathcal{F}_{0}}(\mathbf{0}) $, and
	$ F^{(2)}(\mathbf{0};d ) = -2 d_{u_{1}}^{2} + 0.02 d_{\theta}^{2} =0 \geq 0 $  for all $ d \in P_{\mathcal{F}_{0}}(\mathbf{0}) \cap \{ d \mid F^{\prime}(\mathbf{0};d )=0 \} = \{\mathbf{0}\} $.
	On the other hand, $ z^{0} $ is not a second-order d-stationary point of \eqref{l1pen} with $ \beta_{1}=1, \beta_{2}=0.6 $ where \eqref{eq:br-threshold-2} holds.
	First we can verify \eqref{eq:br-threshold-2} holds, i.e., $ \beta_{1}>K_{g} (1+ K_{1} ) $ and $ \beta_{2}> K_{g} $.
	Since $ \bar{\gamma}=F(z^{0})= 10^{-4} $, for all $ ( \theta, u_{1} , u_{2} )^{\top} \in lev_{\leq\bar{\gamma}} \Theta $, it can be calculated that $ |u_{1} | \leq |\theta | + |u_{1}- \theta | \leq \sqrt{10^{-4}/10^{-2}}+  10^{-4} =0.1001  $. It implies that for all $ ( \theta, u_{1} , u_{2} )^{\top}, ( \theta, \bar{u}_{1} , \bar{u}_{2} )^{\top} \in lev_{\leq\bar{\gamma}} \Theta $,
	\begin{align*}
		&| g(u) - g(\bar{u}) | 
		\leq \sqrt{ (u_{1} + \bar{u}_{1})^{2} + 0.25 } \| u - \bar{u} \|
		< 0.5386 \| u - \bar{u} \|, \\
		& | \psi_{1}(\theta, u_{1}) - \psi_{1}(\theta, \bar{u}_{1}) |
		\leq |u_{1} + \bar{u}_{1}|\cdot |  u_{1} - \bar{u}_{1} |
		< 0.21  |  u_{1} - \bar{u}_{1} |.
	\end{align*}
	Hence, there exist $ K_{g} \in  (0, 0.5386 ] $ and $ K_{1} \in (0, 0.21] $ satisfying \eqref{def:Lipg}-\eqref{def:Lips}, which guarantees \eqref{eq:br-threshold-2}.
	Then, it follows from Theorem \ref{th:P0=P1} and $ z^{0} \in \mathcal{D}_{0} \cap lev_{\leq \bar{\gamma}} \Theta $ that $ z^{0} \in \mathcal{D}_{1} \cap lev_{\leq \bar{\gamma}} \Theta $. Together with Remark \ref{remark:SD0=SD1}, it implies that $ z^{0} \in \mathcal{SD}_{1} $ if and only if \eqref{eq:4-8-1} holds at $z^{0}$ for all $ d \in \mathcal{T}_{\mathcal{F}_{0}}(z^{0}) $ with $ F^{\prime}(z^{0} ; d )=0 $.
	However, for any $ d=(d_{\theta}, d_{u_{1}},d_{u_{2}})^{\top}  $ with $ d_{u_{2}}=0 $ and $ d_{\theta}= d_{u_{1}} \neq 0 $, we have $ d \in \mathcal{T}_{\mathcal{F}_{0}}(z^{0}) $, $ F^{\prime}(z^{0} ; d )=0 $ and
	$ F^{(2)}(z^{0};d) + \sum_{\ell=1}^{L} \beta_{\ell}  \| \psi_{\ell-1}^{(2)}(\theta^{0}, \mathbf{u}_{\ell-1}^{0}; d_{\theta},   d_{ \mathbf{u}_{\ell-1} }  ) \|_{1}
	=    -2 d_{u_{1}}^{2} + 0.02 d_{\theta}^{2} + 1.2 d_{u_{1}}^{2}
	= - 0.78 d_{\theta}^{2} <0 $, which violates \eqref{eq:4-8-1}.
\end{example}

In Section \ref{sec4}, we will provide an application of \eqref{eq:1.1} where the second-order d-stationary points of corresponding \eqref{eq} and \eqref{l1pen} are computable by certain algorithms.

\subsection{Second-order sufficient condition}\label{sec3.4}
Inspired by \cite[Proposition 9.4.2 (b)]{CP-book}, we provide second-order sufficient conditions for strong local minimizers \cite[Section 6.4]{CP-book} of \eqref{eq:1.1} in this subsection.
For a function $ f: \mathcal{F} \subseteq \R^{m} \rightarrow \R$, we say $ x \in \mathcal{F} $ is a strong local minimizer of $f$ on $ \mathcal{F} $ if there exist $ \epsilon_{1},\epsilon_{2}>0 $ such that $ f(\bar{x}) \geq f(x) + \epsilon_{1} \| \bar{x} - x \|^{2} $ for all $ \bar{x} \in \mathcal{F} $ satisfying $ \| \bar{x} - x \| \leq \epsilon_{2} $.
To this end, we need the following assumption about twice semidifferentiability \cite[Definition 13.6]{rockafellar2009variational}.
\begin{assumption}\label{as3}
	Function $g $ and each component of vector functions $\{ \psi_{\ell-1} , \ell \in [L] \} $ are twice semidifferentiable on $ \R^{\bar{N}_L} $ and $ \R^{ n +  \bar{ N}_{\ell-1} }, \ell \in [L] $ respectively.
\end{assumption}
Assumption \ref{as3} is stronger than Assumption \ref{as2}.
As shown in Lemma \ref{lem:1op}, Assumption \ref{as2} provides an upper bound of $ \liminf_{  \tau \downarrow 0 , d^{\prime} \rightarrow d    } \frac{ \Theta(z+\tau d^{\prime} ) - \Theta (z) }{ \tau^{2} /2 } $ along certain directions, whereas \eqref{eq:4-11}-\eqref{eq:4-13} indicate that Assumption \ref{as3} can simultaneously offer a lower bound for it in all directions.
For any twice semidifferentiable function $f$, we have $\text{d} f(x)(d) := \lim_{ \tau \downarrow 0 ,  d^{\prime} \rightarrow d } \frac{ f(x + \tau d^{\prime} ) - f(x  ) }{ \tau }
= f^{\prime} ( x;d )$ for any $ x,d $, and
\begin{equation}
	\label{eq:4-9}
	\begin{aligned}
		\lim_{ \substack{\tau \downarrow 0 \\  d^{\prime} \rightarrow d } }\frac{ f(x + \tau d^{\prime} ) - f(x  ) - \tau \text{d} f (x) (d^{\prime} ) }{ \tau^{2} /2 }
		& = \lim_{ \substack{\tau \downarrow 0 \\  d^{\prime} \rightarrow d } }\frac{ f(x + \tau d^{\prime}  ) - f(x  ) - \tau  f^{\prime} (x ; d^{\prime}  ) }{ \tau^{2} /2 }  \\
		& = \lim_{ \tau \downarrow 0  }\frac{ f(x + \tau d  ) - f(x  ) - \tau f^{\prime} ( x;d ) }{ \tau^{2} /2 } \\
		&= f^{(2)} ( x;d ) ,
	\end{aligned}
\end{equation}
for any $ x,d $.
For any twice semidifferentiable functions $ f_{1},f_{2} $ on $\R^{n}$ and any $ a_{1}, a_{2} \in \R $, the combination $ a_{1} f_{1} + a_{2} f_{2} $ is twice semidifferentiable  on $\R^{n}$.
Then based on previous subsections, we have the following second-order sufficient conditions for \eqref{eq:1.1}.
\begin{theorem}
	\label{th:suf-con}
	Under Assumptions \ref{as1}, \ref{as3} and \eqref{eq:br-threshold-2}, for any $ z=(\theta^{\top} , u^{\top} )^{\top} \in lev_{\leq \bar{\gamma} } \Theta $, if
	\begin{equation}
		\label{eq:suf-con}
		\begin{aligned}
			& \Theta^{\prime}(z;d) \geq 0  \text{ for all $d$} , \\
			& \text{and } F^{(2)} (z;d) - \sum_{\ell=1}^{L} \beta_{\ell}  \| \psi_{\ell-1}^{(2)} ( \theta, \mathbf{u}_{\ell-1} ; d_{\theta} , d_{\mathbf{u}_{\ell-1}} ) \|_{1} > 0 , \\
			&~~~~\text{ for all } d \neq \mathbf{0} \text{ with } \Theta^{\prime}(z;d) = 0,
		\end{aligned}
	\end{equation}
	then $z$ is a strong local minimizer of \eqref{l1pen} and $\theta$ is a strong local minimizer of \eqref{eq:1.1}.
\end{theorem}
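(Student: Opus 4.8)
The plan is to prove the statement in two stages: first that $z$ is a strong local minimizer of \eqref{l1pen}, and then to transfer this property to $\theta$ for \eqref{eq:1.1}. As preliminaries I would observe that the first condition in \eqref{eq:suf-con} makes $z$ a d-stationary point of \eqref{l1pen} with $\Theta(z)\le\bar\gamma$ (with $\bar\gamma$ from \eqref{def:gammabar}), so Lemma~\ref{lem:D1-F0} together with \eqref{eq:br-threshold-2} forces $z\in\mathcal{F}_0$; in particular $I_+^\ell(z)=I_-^\ell(z)=\emptyset$ for every $\ell\in[L]$. I would also record that, under Assumption~\ref{as3}, $F$ and $\Theta$ are twice semidifferentiable (the combination rule noted after Assumption~\ref{as3}, plus the chain rule applied to $\|\cdot\|_1$ composed with the twice semidifferentiable maps $u_\ell-\psi_{\ell-1}(\theta,\mathbf{u}_{\ell-1})$), so that the semiderivative identity \eqref{eq:4-9} is available for $\Theta$.

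For the first stage I would argue by contradiction. If $z$ were not a strong local minimizer of \eqref{l1pen}, there would exist $z^k\to z$ with $z^k\neq z$ and $\Theta(z^k)-\Theta(z)<\tfrac1k\|z^k-z\|^2$. Writing $\tau_k:=\|z^k-z\|\downarrow0$ and $d^k:=(z^k-z)/\tau_k$, I pass to a subsequence so that $d^k\to d$ with $\|d\|=1$, hence $d\neq\mathbf{0}$. Since $\Theta$ is semidifferentiable, $\tau_k^{-1}(\Theta(z+\tau_kd^k)-\Theta(z))\to\Theta'(z;d)$, and the violation inequality gives $\Theta'(z;d)\le0$; combined with $\Theta'(z;d)\ge0$ from \eqref{eq:suf-con} this yields $\Theta'(z;d)=0$. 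For the second order, I would use $\Theta'(z;d^k)\ge0$ to discard the first-order term and deduce from \eqref{eq:4-9} that
\[
\Theta^{(2)}(z;d)=\lim_{k\to\infty}\frac{\Theta(z+\tau_kd^k)-\Theta(z)-\tau_k\Theta'(z;d^k)}{\tau_k^2/2}\le0 ,
\]
the bound coming from $\tau_k\Theta'(z;d^k)\ge0$ and $\Theta(z+\tau_kd^k)-\Theta(z)<\tau_k^2/k$. The key simplification is that $\Theta'(z;d)=0$ forces $d\in\mathcal{T}_{\mathcal{F}_0}(z)$ by \eqref{eq:4-7} (equivalently \eqref{eq:4-8-0} in Remark~\ref{remark:SD0=SD1}), so Theorem~\ref{th:T-express-2} gives $d_{u_\ell}=\psi_{\ell-1}'(\theta,\mathbf{u}_{\ell-1};d_\theta,d_{\mathbf{u}_{\ell-1}})$; together with $z\in\mathcal{F}_0$ this collapses every index set in Proposition~\ref{2dd_F1} to $I_{0,0}^\ell(z;d)=[N_\ell]$, whence $\Theta^{(2)}(z;d)=F^{(2)}(z;d)+\sum_{\ell=1}^{L}\beta_\ell\|\psi_{\ell-1}^{(2)}(\theta,\mathbf{u}_{\ell-1};d_\theta,d_{\mathbf{u}_{\ell-1}})\|_1$. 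Since the second inequality in \eqref{eq:suf-con} gives $F^{(2)}(z;d)>\sum_\ell\beta_\ell\|\psi_{\ell-1}^{(2)}\|_1\ge0$, this forces $\Theta^{(2)}(z;d)>0$, contradicting $\Theta^{(2)}(z;d)\le0$.

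For the second stage I would transfer the quadratic growth from $\Theta$ at $z$ to $\Psi+\lambda\|\cdot\|^2$ at $\theta$. Let $N$ and $c>0$ be the neighborhood and constant furnished by the first stage. For $\theta'$ near $\theta$, define $z'$ through the feasible recursion $u_\ell'=\psi_{\ell-1}(\theta',\mathbf{u}'_{\ell-1})$, so $z'\in\mathcal{F}_0$ and $\Theta(z')=F(z')=\Psi(\theta')+\lambda\|\theta'\|^2$, while $\Theta(z)=\Psi(\theta)+\lambda\|\theta\|^2$ because $z\in\mathcal{F}_0$. Local Lipschitz continuity from Assumption~\ref{as1}, applied along the recursion, yields $\|z'-z\|\le C\|\theta'-\theta\|$ for some constant $C$, so shrinking the neighborhood of $\theta$ keeps $z'\in N$; combining $\Theta(z')\ge\Theta(z)+c\|z'-z\|^2$ with the trivial bound $\|z'-z\|\ge\|\theta'-\theta\|$ gives $\Psi(\theta')+\lambda\|\theta'\|^2\ge\Psi(\theta)+\lambda\|\theta\|^2+c\|\theta'-\theta\|^2$, so $\theta$ is a strong local minimizer of \eqref{eq:1.1}.

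The main obstacle I anticipate is the second-order step: the perturbation directions $d^k$ are not fixed, so plain twice directional differentiability (Assumption~\ref{as2}) is insufficient and I must invoke twice semidifferentiability (Assumption~\ref{as3}) to pass to the limit along $d^k\to d$ via \eqref{eq:4-9}. Care is also needed to discard the first-order remainder $\tau_k\Theta'(z;d^k)$ using its sign, and to justify that $\Theta'(z;d)=0$ places $d$ inside $\mathcal{T}_{\mathcal{F}_0}(z)$ so that the $\ell_1$ penalty contributes the clean $+\sum_\ell\beta_\ell\|\psi_{\ell-1}^{(2)}\|_1$ term that the minus-sign hypothesis in \eqref{eq:suf-con} dominates.
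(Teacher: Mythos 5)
There is a genuine gap in your first stage, and it sits exactly at the point you flagged as the ``main obstacle.'' You assert at the outset that $\Theta$ is twice semidifferentiable under Assumption \ref{as3} via ``the chain rule applied to $\|\cdot\|_1$ composed with the twice semidifferentiable maps $u_\ell-\psi_{\ell-1}(\theta,\mathbf{u}_{\ell-1})$.'' No such chain rule holds: the paper's Appendix \ref{sec:appendixC} is devoted to a counterexample showing that $|f(\cdot)|$ need not be twice semidifferentiable even when $f$ is, and the failure occurs precisely at a point where $f$ and $f'(\cdot;d)$ both vanish --- which is exactly the configuration you are in ($z\in\mathcal{F}_0$ makes the inner functions zero, and $\Theta'(z;d)=0$ forces $d\in\mathcal{T}_{\mathcal{F}_0}(z)$, making their directional derivatives zero). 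Consequently your identity
\begin{align*}
\Theta^{(2)}(z;d)=\lim_{k\to\infty}\frac{\Theta(z+\tau_kd^k)-\Theta(z)-\tau_k\Theta'(z;d^k)}{\tau_k^2/2}
\end{align*}
is unjustified: the parabolic quotient along the moving sequence $(d^k,\tau_k)$ can converge to a value strictly below the fixed-direction derivative $\Theta^{(2)}(z;d)$ (in the Appendix \ref{sec:appendixC} example the moving-direction limit is $-6$ while the fixed-direction value is $+6$). So from $\Theta(z^k)-\Theta(z)<\tau_k^2/k$ you may only conclude that the \emph{liminf} of the parabolic quotient is $\le 0$, not that $\Theta^{(2)}(z;d)\le 0$, and your contradiction with $\Theta^{(2)}(z;d)=F^{(2)}(z;d)+\sum_\ell\beta_\ell\|\psi_{\ell-1}^{(2)}\|_1>0$ evaporates.

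The sign discrepancy you noticed at the end is the symptom of this error, not harmless slack. The paper's proof bounds the parabolic liminf of each penalty component $f_{\ell,j}(z)=|[u_\ell]_j-[\psi_{\ell-1}]_j(\theta,\mathbf{u}_{\ell-1})|$ from \emph{below} by $-|[\psi_{\ell-1}^{(2)}(\theta,\mathbf{u}_{\ell-1};d_\theta,d_{\mathbf{u}_{\ell-1}})]_j|$ using \cite[(4.15)]{CP-book} (exploiting only the twice semidifferentiability of the inner maps, plus $I_+^\ell=I_-^\ell=\emptyset$ and $I_{0,+}^\ell=I_{0,-}^\ell=\emptyset$), while $F$ itself is genuinely twice semidifferentiable so \eqref{eq:4-9} applies to it. This yields
\begin{align*}
\liminf_{\tau\downarrow 0,\,d'\to d}\frac{\Theta(z+\tau d')-\Theta(z)}{\tau^2/2}\;\ge\;F^{(2)}(z;d)-\sum_{\ell=1}^{L}\beta_\ell\|\psi_{\ell-1}^{(2)}(\theta,\mathbf{u}_{\ell-1};d_\theta,d_{\mathbf{u}_{\ell-1}})\|_1,
\end{align*}
and it is precisely this minus-sign quantity that hypothesis \eqref{eq:suf-con} makes positive; quadratic growth then follows from \cite[Theorem 13.24(c)]{rockafellar2009variational}. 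Your second stage (transferring the growth from $z$ to $\theta$ along the feasible recursion, using $\|z'-z\|\ge\|\theta'-\theta\|$) is sound and matches the paper's argument in substance. To repair the first stage, replace the claimed twice semidifferentiability of $\Theta$ by the componentwise lower bound above.
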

\begin{proof}
	We first prove that $z$ is a strong local minimizer of \eqref{l1pen}. 
	According to \cite[Theorem 13.24 (c)]{rockafellar2009variational}, it is equivalent to prove that $ \mathbf{0} \in \partial \Theta (z) $ and
	\begin{align}
		\label{eq:4-10}
		\liminf_{\substack{ \tau \downarrow 0 \\ d^{\prime} \rightarrow d  } } \frac{ \Theta(z+\tau d^{\prime} ) - \Theta (z) }{ \tau^{2} /2 } >0
		~\text{ for all } d \neq \mathbf{0}.
	\end{align}
	Since $ \mathbf{0} \in \partial \Theta (z) $ can be obtained by $\hat{\partial} \Theta (z) \subseteq \partial \Theta (z) $ from \cite[Theorem 8.6]{rockafellar2009variational} and $ \mathbf{0} \in \hat{\partial} \Theta (z) $ from $ \Theta^{\prime}(z;d)\geq 0 $ for all $d$ and \cite[Exercise 8.4]{rockafellar2009variational}, we only need to prove \eqref{eq:4-10}. 
	For any $ d \neq \mathbf{0} $ satisfying $ \Theta^{\prime}(z;d) > 0 $, it follows from Assumption \ref{as1} that $ \lim_{ \tau \downarrow 0, d^{\prime} \rightarrow d   }  [ \Theta(z+\tau d^{\prime} ) - \Theta (z) ] / \tau   $ exists and equals to $\Theta^{\prime}(z;d)$, which implies that
	\begin{align*}
		\liminf_{\substack{ \tau \downarrow 0 \\ d^{\prime} \rightarrow d  } } \frac{ \Theta(z+\tau d^{\prime} ) - \Theta (z) }{ \tau^{2} /2 }
		= \liminf_{\substack{ \tau \downarrow 0 \\ d^{\prime} \rightarrow d  } } \frac{ [ \Theta(z+\tau d^{\prime} ) - \Theta (z) ] / \tau }{ \tau  /2 }
		= + \infty >0.
	\end{align*}
	For any $ d \neq \mathbf{0} $ satisfying $ \Theta^{\prime}(z;d) = 0 $, it follows from $ \Theta^{\prime}(z;d^{\prime}) \geq 0  $ for all $ d^{\prime} $ that
	\begin{align}
		& \liminf_{\substack{ \tau \downarrow 0 \\ d^{\prime} \rightarrow d  } } \frac{ \Theta(z+\tau d^{\prime} ) - \Theta (z) }{ \tau^{2} /2 } \notag \\
		\geq \, & \liminf_{\substack{ \tau \downarrow 0 \\ d^{\prime} \rightarrow d  } } \frac{ \Theta(z+\tau d^{\prime} ) - \Theta (z) - \tau \Theta^{\prime} (z;d^{\prime}) }{ \tau^{2} /2 } \notag \\
		\geq \, & \liminf_{\substack{ \tau \downarrow 0 \\ d^{\prime} \rightarrow d  } } \frac{ F(z+\tau d^{\prime} ) - F (z) - \tau F^{\prime} (z;d^{\prime}) }{ \tau^{2} /2 } \label{eq:4-11} \\
		&+ \sum_{\ell=1}^{L} \beta_{\ell} \sum_{j \in [N_{\ell}]} \liminf_{\substack{ \tau \downarrow 0 \\ d^{\prime} \rightarrow d  } } \frac{ f_{\ell,j}(z+\tau d^{\prime} ) - f_{\ell,j} (z) - \tau f_{\ell,j}^{\prime} (z;d^{\prime}) }{ \tau^{2} /2 } ,\notag
	\end{align}
	where $ f_{\ell,j}(z):= | [u_{\ell}]_{j} - [\psi_{\ell-1}]_{j}( \theta, \mathbf{u}_{\ell-1} )  | $ for all $ \ell \in [L], j \in [N_{\ell}] $. By the twice semidifferentiability of $g$ and $ \lambda \|\cdot\|^{2} $, it follows from \eqref{eq:4-9} that
	\begin{align}
		\label{eq:4-12}
		\liminf_{\substack{ \tau \downarrow 0 \\ d^{\prime} \rightarrow d  } } \frac{ F(z+\tau d^{\prime} ) - F (z) - \tau F^{\prime} (z;d^{\prime}) }{ \tau^{2} /2 }
		= F^{(2)}(z;d).
	\end{align}
	And for all $ \ell \in [L], j\in [N_{\ell}] $, it follows from twice semidifferentiability of $ [\psi_{\ell-1}]_{j} $ and \cite[(4.15)]{CP-book} that
	\begin{align*}
		& \liminf_{\substack{ \tau \downarrow 0 \\ d^{\prime} \rightarrow d  } } \frac{ f_{\ell,j}(z+\tau d^{\prime} ) - f_{\ell,j} (z) - \tau f_{\ell,j}^{\prime} (z;d^{\prime}) }{ \tau^{2} /2 }  \\
		\geq \,& \left\{
		\begin{matrix}
			- [ \psi_{\ell-1}^{(2)} ( \theta, \mathbf{u}_{\ell-1} ; d_{\theta} , d_{ \mathbf{u}_{\ell-1} } ) ]_{j}, & \text{if } j \in I_{+}^{\ell}(z) \cup I_{0,+}^{\ell}(z;d), \\
			[ \psi_{\ell-1}^{(2)}( \theta, \mathbf{u}_{\ell-1} ; d_{\theta} , d_{ \mathbf{u}_{\ell-1} } ) ]_{j}, & \text{if } j \in I_{-}^{\ell}(z) \cup I_{0,-}^{\ell}(z;d) ,\\
			-| [ \psi_{\ell-1}^{(2)}( \theta, \mathbf{u}_{\ell-1} ; d_{\theta} , d_{ \mathbf{u}_{\ell-1} } ) ]_{j} |, & \text{if } j \in  I_{0,0}^{\ell}(z;d) ,
		\end{matrix}
		\right.
	\end{align*}
	where $ I_{+}^{\ell}(z)$, $ I_{-}^{\ell}(z)$, $I_{0}^{\ell}(z) $ are defined as in Proposition \ref{dd_F1}, $ I_{0,+}^{\ell}(z;d)$, $I_{0,-}^{\ell}(z;d)$, $I_{0,0}^{\ell}(z;d)  $ are defined as in Proposition \ref{2dd_F1}.
	In fact, $ I_{+}^{\ell}(z) =  I_{-}^{\ell}(z) = \emptyset $ for all $ \ell $ since $ z \in \mathcal{F}_{0} $ according to Lemma \ref{lem:D1-F0}, and furthermore it follows from \eqref{eq:4-8-0} that $I_{0,+}^{\ell}(z;d) = I_{0,-}^{\ell}(z;d) = \emptyset $ for all $ \ell $. Thus, the inequality can be simplified as
	\begin{align}
		\label{eq:4-13}
		\liminf_{\substack{ \tau \downarrow 0 \\ d^{\prime} \rightarrow d  } } \frac{ f_{\ell,j}(z+\tau d^{\prime} ) - f_{\ell,j} (z) - \tau f_{\ell,j}^{\prime} (z;d^{\prime}) }{ \tau^{2} /2 }
		\geq   -| [ \psi_{\ell-1}^{(2)}( \theta, \mathbf{u}_{\ell-1} ; d_{\theta} , d_{ \mathbf{u}_{\ell-1} } ) ]_{j} |.
	\end{align}
	Plugging \eqref{eq:4-12} and \eqref{eq:4-13} into \eqref{eq:4-11}, we have
	\begin{align*}
		\liminf_{\substack{ \tau \downarrow 0 \\ d^{\prime} \rightarrow d  } } \frac{ \Theta(z+\tau d^{\prime} ) - \Theta (z) }{ \tau^{2} /2 }
		\geq F^{(2)}(z;d) - \sum_{\ell=1}^{L} \beta_{\ell} \| \psi_{\ell-1}^{(2)}( \theta, \mathbf{u}_{\ell-1} ; d_{\theta} , d_{ \mathbf{u}_{\ell-1} } ) \|_{1}
		>0 ,
	\end{align*}
	where the last inequality comes from \eqref{eq:suf-con}. Thus, \eqref{eq:4-10} holds and $z$ is a strong local minimizer of \eqref{l1pen}.
	\\
	\indent Next we prove $\theta$, the component of $z$, is a strong local minimizer of \eqref{eq:1.1} by contradiction.
	If $\theta$ is not a strong local minimizer of \eqref{eq:1.1}, 
	then there exists a sequence $ \{ \theta^{k}, k\geq 1 \} $ converging to $ \theta $ such that
	\begin{align*}
		\Psi( \theta^{k} ) + \lambda \| \theta^{k} \|^{2} < \Psi( \theta  ) + \lambda \| \theta  \|^{2}  + \frac{1}{k} \| \theta^{k} - \theta  \|^{2}.
	\end{align*}
	Based on $ \{ \theta^{k}, k\geq 1 \} $, we can construct $ \{ z^{k}, k\geq 1 \} \subseteq \mathcal{F}_{0} $ by setting $ z^{k}=( (\theta^{k})^{\top} , (u_{1}^{k})^{\top} , \dots  , (u_{L}^{k})^{\top} )^{\top} $ with $ u_{\ell}^{k}:= \psi_{\ell-1} (\theta^{k}, u_{1}^{k},\dots,u_{\ell-1}^{k} ) $ for all $ \ell \in [L] $. Together with $ z \in \mathcal{F}_{0} $ from Lemma \ref{lem:D1-F0}, it implies that
	\begin{align}
		\Theta(z^{k})
		= \Psi( \theta^{k} ) + \lambda \| \theta^{k} \|^{2}
		< \Psi( \theta  ) + \lambda \| \theta  \|^{2}  + \frac{1}{k} \| \theta^{k} - \theta  \|^{2}
		&= \Theta(z) + \frac{1}{k} \| \theta^{k} - \theta  \|^{2} \notag \\
		&\leq \Theta(z) + \frac{1}{k} \| z^{k} - z  \|^{2}. \label{eq:4-14}
	\end{align}
	Meanwhile, it follows from the continuity of $ \{ \psi_{\ell-1}, \ell \in [L] \} $ and $ \theta^{k} \rightarrow \theta $ that $ z^{k} \rightarrow z $. Together with the strict inequality in \eqref{eq:4-14}, it contradicts the fact that $z$ is a strong local minimizer of \eqref{l1pen}. Hence, $\theta$ is a strong local minimizer of \eqref{eq:1.1}.
\end{proof}
\begin{remark}
	\label{rem:suf-con}
	According to \eqref{eq:4-8-0}, the sufficient condition \eqref{eq:suf-con} is equivalent to $ \Theta^{\prime}(z;d) \geq 0 $ for all $d$ and $ F^{(2)} (z;d) - \sum_{\ell=1}^{L} \beta_{\ell}  \| \psi_{\ell-1}^{(2)} ( \theta, \mathbf{u}_{\ell-1} ; d_{\theta} , d_{\mathbf{u}_{\ell-1}} ) \|_{1} > 0 $ for all $ d \neq \mathbf{0} $ with $ d \in \mathcal{T}_{\mathcal{F}_{0}}(z)$ and $ F^{\prime}(z;d) = 0 $. Hence, it can be observed that for the case $ L=1 $, \eqref{eq:suf-con} is milder than \cite[(9.42)]{CP-book} for $ \min_{\theta} h(G(\theta)) $ with $ G(\theta):= ( \psi_{0}(\theta)^{\top} , \lambda \|\theta \|^{2} )^{\top}$ and $ h(y):= g( [y]_{1:N_{1}} ) + [y]_{(N_{1} + 1) } $, since under $ \rho:=\beta_{1} $ and $J_{+}:=\{ j \in [N_{1}] \mid [\psi_{0}^{(2)} (\theta ; d_{\theta}) ]_{j} \leq 0 \}, J_{-}:=\{N_{1} +1 \} \cup ( [N_{1}] \backslash J_{+} ) $,
	\begin{align*}
		& h^{(2)}( G(\theta);G^{\prime}(\theta; d_{\theta}) ) + \rho \,   \left[ \sum_{j \in J_{+}} G_{j}^{(2)}(\theta;d_{\theta}) -\sum_{j \in J_{-}} G_{j}^{(2)}(\theta;d_{\theta})  \right] \\
		= \, & g^{(2)} ( \psi_{0}(\theta);\psi_{0}^{\prime}(\theta; d_{\theta}) ) - 2 \lambda \beta_{1} \|d_{\theta}  \|^{2} - \beta_{1}  \| \psi_{0}^{(2)} ( \theta ; d_{\theta}  ) \|_{1} \\
		< \, & F^{(2)} (z;d) -   \beta_{1}  \| \psi_{0}^{(2)} ( \theta ; d_{\theta}  ) \|_{1},
	\end{align*}
	where the inequality uses $ F^{(2)} (z;d)= g^{(2)} ( \psi_{0}(\theta);\psi_{0}^{\prime}(\theta; d_{\theta}) ) + 2 \lambda  \|d_{\theta}  \|^{2} $ and $ d_{\theta} \neq \mathbf{0} $ for all $ d \in \mathcal{T}_{\mathcal{F}_{0}}(z) $ with $ d \neq \mathbf{0} $. 
\end{remark}
Theorem \ref{th:suf-con} enables us to determine whether a d-stationary point of \eqref{l1pen} is a strong local minimizer for \eqref{eq:1.1}.

\section{Application: RNNs} \label{sec4}
The recurrent neural network (RNN) is a kind of feedforward neural networks for sequential processing. Different RNN variants, such as Elman networks \cite{ELMAN1990}, Jordan networks \cite{J1990}, and LSTM \cite{LSTM1997}, have been widely applied on language modelling like ChatGPT and protein secondary structure prediction \cite{Graves2012SupervisedSL}. 
Due to the universal approximation property and the fundamental significance for the other RNN variants \cite{HAMMER2000}, we focus on the training of the Elman RNN with a single unidirectional hidden layer in this section.
Without loss of generality, we consider the case where the number of sequences is $ N=1 $ and the number of time steps in the sequence is $T=3$.
Given a sequence of inputs $ \{ x_{t}\in \R^{N_{0}} , t \in [3] \}$ and an associated sequence of labels $ \{ y_{t}\in \R^{N_{2}} , t \in [3] \}$, the model can be formulated as the following constrained optimization problem
\begin{equation}
	\label{form2-eq}
	\tag{P0-RNN}
	\begin{aligned}
		\min_{\substack{A,V,W,b,c,\\  s,w,r,v }}&~   \frac{\left\|  r - y \right\|^{2}}{6}     + \lambda  (\left\| A \right\|_{F} ^{2}+\left\| V \right\|_{F} ^{2}+\left\| W \right\|_{F} ^{2}+ \left\| b \right\|^{2} + \left\| c \right\|^{2}   )    ,   \\
		\text{subject to }
		&~w_{t} =  W s_{t-1}  +A x_{t}  + b, ~  s_{t} =\sigma   \left( w_{t}  \right) , \\
		&~ v_{t} =  V s_{t}  + c,~ r_{t} =\sigma  \left( v_{t}  \right) ,~   t \in [3],
	\end{aligned}
\end{equation}
where $ s_{0}  =\mathbf{0} \in \R^{N_{1}} $  and other notations are defined as follows.
\begin{enumerate}
	\item Vector $ y  $ refers to $ y =(  y_{1} ^{\top}, y_{2} ^{\top},  y_{3} ^{\top} )^{\top} \in \R^{3 N_{2} } $.
	
	\item Vectors $ s_{t}  \in \R^{N_{1}} $ and $ r_{t}  \in \R^{N_{2}} $ refer to the hidden value and output at time $ t $, respectively. For brevity, we denote $ s =(  s_{1} ^{\top},s_{2} ^{\top}, s_{3} ^{\top} )^{\top} \in \R^{3 N_{1} },~
	r =(  r_{1} ^{\top}, r_{2} ^{\top}, r_{3} ^{\top} )^{\top} \in \R^{3 N_{2}  } $.
	
	\item Vectors $ w_{t}  \in \R^{N_{1}} $ and $ v_{t}  \in \R^{N_{2}} $ refer to the auxiliary hidden value and auxiliary output at time $ t $, respectively. We denote $ w =(  w_{1} ^{\top},w_{2} ^{\top}, w_{3} ^{\top} )^{\top} \in \R^{3 N_{1}  }  , ~
	v =(  v_{1} ^{\top},v_{2} ^{\top}, v_{3} ^{\top} )^{\top} \in \R^{3 N_{2}  } $.
	
	\item Matrices $ W\in \R^{N_{1}\times N_{1}}, A\in \R^{N_{1}\times N_{0}},V\in \R^{N_{2}\times N_{1}}$ and vectors $ b\in \R^{N_{1}},c\in \R^{N_{2}} $ are network parameters  independent of $t$. And we aggregate those parameters as
	\begin{align}\label{def:theta}
		\theta:= \left( \operatorname{vec}(A)^{\top} ,\operatorname{vec}(V)^{\top} ,\operatorname{vec}(W)^{\top} ,b^{\top} ,c^{\top}\right) ^{\top} \in \R^{n} ,
	\end{align}
	where $\operatorname{vec} (A):= ( a_{1}^{\top} , \dots, a_{q}^{\top}
	)^{\top} $ for any matrix $A=( a_{1}, \dots, a_{q} ) \in \R^{p \times q} $ with $ \{ a_{j} \in \R^{p}, j \in [q] \} $, and $ n:= N_{0}N_{1}+N_{1}N_{2}+N_{1}^{2}+N_{1}+ N_{2} $ in this case.
	
	\item Function $\sigma(u) :=\max \{ u, \alpha u \}$ for all $  u \in \R $ is (leaky) ReLU activator with $ \alpha \in [0,1) $. For brevity, we will not distinguish whether $ \sigma(\cdot) $ applies on a scalar or on a vector componentwisely when there is no ambiguity.
\end{enumerate} 

To reconcile the notations in \eqref{form2-eq} with those in \eqref{eq}, we could first define $ L:= 8 $ and
\begin{align*}
	u_{2t-1}:= w_{t}, ~ u_{2t}:= s_{t}, ~ t \in [3], ~~
	u_{7}:= v, ~ u_{8}:= r;
\end{align*}
then define $ \mathbf{u}_{0} $ to be an empty placeholder, $ \mathbf{u}_{\ell}:= ( u_{1}^{\top},\dots,u_{\ell}^{\top} )^{\top}  $ for all $ \ell \in [8] $ as in \eqref{def:1.1}. Thereby, we have
\begin{align*}
	u:=\mathbf{u}_{L}
	=(w_{1}^{\top}, s_{1}^{\top},w_{2}^{\top}, s_{2}^{\top},w_{3}^{\top}, s_{3}^{\top}, v^{\top}, r^{\top} )^{\top}
	\in \R^{ 6( N_{1} + N_{2}  ) } ,
\end{align*}
which aggregates all the auxiliary variables $s,w,r,v $ in \eqref{form2-eq}.
Together with \eqref{def:theta},  we have
\begin{align}
	z:=(\theta^{\top} ,  u^{\top} )^{\top}\in \R^{\bar{N}} ,\text{ where }
	\bar{N}:= n +6 (N_{2}+N_{1}),  \label{def:zrnn}
\end{align}
so that the objective function of \eqref{form2-eq} can be denoted as
\begin{align*}
	F\left( z\right) := g(u) + \lambda  \left\| \theta \right\| ^{2}, \text{ where }
	g(u):= \left\|  r - y \right\|^{2} / 6.
\end{align*} 
And for the constraints of \eqref{form2-eq}, using Kronecker product $ \otimes $, we denote
\begin{align*}
	\psi_{2t-2}(\theta, \mathbf{u}_{2t-2} ) 
	:= ( x_{t}^{\top} \otimes I_{N_{1}} \quad \mathbf{0} \quad  s_{t-1}^{\top} \otimes I_{N_{1}} \quad  I_{N_{1} }  \quad  \mathbf{0} ) \, \theta ,  ~
	\psi_{2t-1}(\theta, \mathbf{u}_{2t-1} ) 
	:= \sigma(w_{t} )
\end{align*}
for all $ t \in [3] $, and
\begin{align*}
	\psi_{6}(\theta,\mathbf{u}_{6}) 
	:=
	\begin{pmatrix}
		\mathbf{0} & s_{1}^{\top} \otimes I_{N_{2}}  & \mathbf{0} & \mathbf{0} & I_{N_{2}} \\
		\mathbf{0} & s_{2}^{\top} \otimes I_{N_{2}}  & \mathbf{0} & \mathbf{0} & I_{N_{2}} \\
		\mathbf{0} & s_{3}^{\top} \otimes I_{N_{2}}  & \mathbf{0} & \mathbf{0} & I_{N_{2}}
	\end{pmatrix} \theta , ~
	\psi_{7}( \theta, \mathbf{u}_{7}) 
	:= \sigma(v).
\end{align*}
Then it can be checked that
\begin{align*}
	u_{\ell}= \psi_{\ell-1}(\theta, \mathbf{u}_{\ell-1} ), \, \ell \in [8] \Leftrightarrow
	\left \{
	\begin{aligned}
		& w_{t} =  W s_{t-1}  +A x_{t}  + b, ~  s_{t} =\sigma   \left( w_{t}  \right) , \\
		&  v_{t} =  V s_{t}  + c,~ r_{t} =\sigma  \left( v_{t}  \right) ,~   t \in [3],
	\end{aligned} \right.
\end{align*}
and the above functions $g $ and $ \{ \psi_{\ell-1}, \ell \in [8] \} $ are continuous. Hence, \eqref{form2-eq} is an application of \eqref{eq}.
Naturally, \eqref{form2-eq} has a reformulation corresponding to \eqref{eq:1.1}. As noted at the beginning of Section \ref{sec3.2}, they are equivalent when neglecting dimension lifting.
\begin{remark}\label{remark-[8]}
	\eqref{form2-eq} provides an example illustrating the differences between \eqref{eq:1.1} and (2.1)-(2.2) of \cite{DNN-CHP}.
	Firstly, unifying $ A,V,W,b,c $ as $ \theta $ makes it convenient to sharing parameters $ A,W,b $ in $ \psi_{0},\psi_{2} $ and $ \psi_{4} $.
	Secondly, $ \psi_{6} $ not only depends on $ \theta $ and $ u_{6} $ (i.e., $ s_{3} $), but also depends on $ u_{2}, u_{4} $ (i.e., $ s_{1}, s_{2} $), which transmits the information across multiple layers.
	In contrast, DNNs in \cite{DNN-CHP} demand distinct parameters in different layers, which lacks a mechanism to maintain parameter consistency among the layers sharing parameters during the training process. Figure \ref{fig1} shows the architectures of RNN in \eqref{form2-eq} and DNN in (1.1) of \cite{DNN-CHP}.  
\end{remark}

\begin{figure}[htbp]
	\centering 
	\subfigure[RNN]{
		\label{RNN}
		\includegraphics[width=0.4 \textwidth]{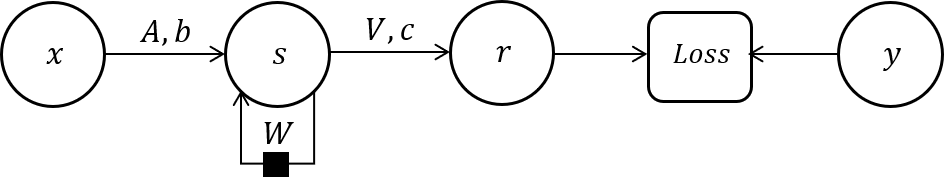}}

	\subfigure[DNN]{
		\label{DNN}
		\includegraphics[width=0.7 \textwidth]{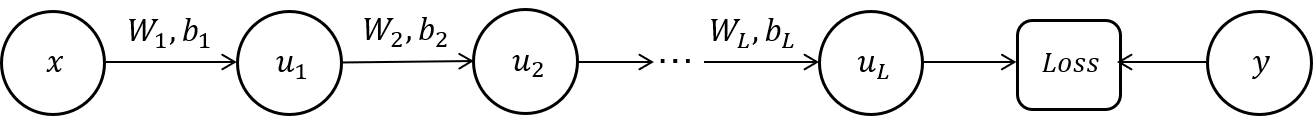}}
	\caption{Architectures of RNN in \eqref{form2-eq} and DNN in \cite{DNN-CHP}}
	\label{fig1}
\end{figure}

Next, we will explore the benefits of results in Section \ref{sec3} for RNN training based on \eqref{form2-eq}.
For simplicity, we merge penalty parameters $ \{ \beta_{\ell}, \ell \in [L] \} $ into $ (\beta_{1} ,\beta_{2} )>0 $ in the $\ell_{1}$-penalized form of \eqref{form2-eq}:
\begin{equation}
	\label{form2-l1pen}
	\tag{P1-RNN}
	\begin{aligned}
		\min_{z \in \R^{\bar{N}}}
		\Theta(z) ,
	\end{aligned}
\end{equation}
where
\begin{align*}
	\Theta(z):= \, &F(z) + \beta_{1} \sum_{\ell=1}^{6}  \left\| u_{\ell}-\psi_{\ell-1}(\theta, \mathbf{u}_{\ell-1} )  \right\| _{1}  + \beta_{2} \sum_{\ell=7}^{8}  \left\| u_{\ell}-\psi_{\ell-1}(\theta, \mathbf{u}_{\ell-1} )  \right\| _{1}  \\
	= \, & F(z) + \beta_{1} \sum_{t=1}^{3}  \left( \| w_{t}  - W s_{t-1}  -A x_{t}  - b \|_{1} + \| s_{t}  - \sigma( w_{t}  ) \|_{1} \right) \\
	& + \beta_{2} \sum_{t=1}^{3}   \left(  \| v_{t} - V s_{t }  - c \|_{1} + \| r_{t}  - \sigma( v_{t} ) \|_{1} \right).
\end{align*}

For \eqref{form2-eq} and \eqref{form2-l1pen}, denote
\begin{align*}
	&\mathcal{F}_{0}^{RNN}:= \left\{ z \Bigg|
	\begin{matrix}
		w_{t} =  W s_{t-1}  +A x_{t}  + b,~
		s_{t} =\sigma   \left( w_{t}  \right) , \\
		v_{t} =  V s_{t}  + c, ~
		r_{t} =\sigma  \left( v_{t}  \right) ,  ~ t \in [3]
	\end{matrix} \right\} , \\
	&\mathcal{S}_{0}^{RNN} := \argmin_{z \in \mathcal{F}_{0}^{RNN} } F(z), ~ \mathcal{S}_{1}^{RNN} := \argmin_{z \in \R^{\bar{N}} } \Theta (z) ,
\end{align*}
and break the direction $d \in \R^{\bar{N}} $ according to the blocks of variable $z$ defined in \eqref{def:zrnn}:
\begin{equation}
	\label{def:drnn}
	\begin{aligned}
		&d=(d_{\theta}^{\top},  d_{u}^{\top}  )^{\top}, \text{ with } \\
		&~d_{\theta}=\left(d_{A}^{\top} ,d_{V}^{\top} ,d_{W}^{\top} ,d_{b}^{\top} , d_{c}^{\top}  \right) ^{\top}  , \\
		&~d_{u}=(d_{u_{1}}^{\top}, \dots, d_{u_{8}}^{\top} )^{\top}= ( d_{w_{1}}^{\top}, d_{s_{1}}^{\top},d_{w_{2}}^{\top}, d_{s_{2}}^{\top}, d_{w_{3}}^{\top}, d_{s_{3}}^{\top}, d_{v}^{\top}, d_{r}^{\top} )^{\top} , 
	\end{aligned}
\end{equation}
where all dimensions are consistent with the corresponding variables. 
And for any $d \in \R^{\bar{N}} $, we define $ d_{r}:= ( d_{r_{1}}^{\top} , d_{r_{2}}^{\top} , d_{r_{3}}^{\top}  )^{\top}  $, $ d_{v}:= ( d_{v_{1}}^{\top} , d_{v_{2}}^{\top} , d_{v_{3}}^{\top}  )^{\top}   $, $ d_{s}:= ( d_{s_{1}}^{\top},  d_{s_{2}}^{\top},d_{s_{3}}^{\top}  )^{\top} $, $ d_{w}:= ( d_{w_{1}}^{\top},  d_{w_{2}}^{\top},d_{w_{3}}^{\top}  )^{\top} $, and $ d_{s_{0} }:= \mathbf{0} \in \R^{N_{1}} $. 
For any vector $a \in \R^{pq} $, denote $$ \operatorname{mat}_{p,q}(a):=
\begin{pmatrix}
	a_{1}& a_{p+1}& \cdots& a_{p(q-1)+1}\\
	\vdots& \vdots &\vdots& \vdots \\
	a_{p} & a_{2p} & \cdots& a_{pq}
\end{pmatrix}. $$ 
Applying the results in Sections \ref{sec2} and \ref{sec3} on \eqref{form2-eq} and \eqref{form2-l1pen}, we obtain the following corollary.
\begin{corollary}\label{cor:rnn}
	(i) The optimal solution sets $ \mathcal{S}_{0}^{RNN} $ and $ \mathcal{S}_{1}^{RNN} $ are nonempty and compact for all $ ( \beta_{1} , \beta_{2} ) >0 $.
	(ii) Any local minimizer $z$ of \eqref{form2-eq} is a second-order d-stationary point of \eqref{form2-eq}, that is
	\begin{align*}
		& z \in \mathcal{F}_{0}^{RNN}
		\text{ and } [\nabla F(z)]^{\top} d \geq 0, \, \forall d \in \mathcal{T}_{\mathcal{F}_{0}^{RNN}}(z) \\
		& \text{and } d^{\top} \nabla^{2} F(z) d \geq 0 , \, \forall d \in P_{\mathcal{F}_{0}^{RNN}}  (z) \text{ with } [\nabla F(z)]^{\top} d =0,
	\end{align*}
	and for all $ ( \beta_{1} , \beta_{2} ) >0 $, any local minimizer $z$ of \eqref{form2-l1pen} is a second-order d-stationary point of \eqref{form2-l1pen}, that is
	\begin{align*}
		\Theta^{\prime}(z;d)\geq 0, \, \forall d \in \R^{\bar{N}}
		\text{and } \Theta^{(2)}(z;d) \geq 0 , \, \forall d \text{ with } \Theta^{\prime}(z;d) =  0,
	\end{align*}
	where for any $z \in \mathcal{F}_{0}^{RNN} $,
	\begin{align*}
		&\mathcal{T}_{\mathcal{F}_{0}^{RNN}}(z)= \left\{ d \in \R^{\bar{N}} \Bigg|
		\begin{matrix}
			d_{v_{t}}= D_{V} s_{t}+ V d_{ s_{t} } + d_{c}, ~
			d_{r_{t}} = \sigma^{\prime}(v_{t}; d_{v_{t}}), \\
			d_{w_{t}}= D_{W} s_{t-1}+ W d_{ s_{t-1} }  + D_{A} x_{t} + d_{b}, \\
			d_{s_{t}} = \sigma^{\prime}(w_{t}; d_{w_{t}}),
			~ t \in [3]
		\end{matrix}
		\right\} , \\
		& P_{\mathcal{F}_{0}^{RNN}} (z)= \{ d \in \mathcal{T}_{\mathcal{F}_{0}^{RNN}}(z) \mid D_{V} d_{ s_{t} } =0, ~ D_{W} d_{ s_{t-1} } = 0, ~ t \in [3] \}
	\end{align*}
	with $ D_{A}:= \operatorname{mat}_{N_{1}, N_{0}}(  d_{A} )  ,\,
	D_{V}:= \operatorname{mat}_{N_{2}, N_{1}}(  d_{V} )  ,\,
	D_{W}:= \operatorname{mat}_{N_{1}, N_{1}}(  d_{W} )  $. \\
	(iii) Under the setting of
	\begin{align} \label{eq:br-threshold-rnn}
		\beta_{1} >    \gamma_{1} \gamma_{y}  \sqrt{{2}/{(3 \lambda)}}    ,  ~ ~
		\beta_{2} >  \sqrt{{2 \gamma_{y}}/{3}} ,
	\end{align}
	where
	$
	\gamma_{y}:= \Theta(\mathbf{0}) = \|y\|^{2} / 6,
	\gamma_{1}:= \sum_{i=0}^{2} (   \sqrt{{\gamma_{y}}/{\lambda} }  ) ^{i}
	$, we have that
	\begin{itemize}
		\item[(a)] $ \mathcal{S}_{0}^{RNN} = \mathcal{S}_{1}^{RNN} $;  and
		\item[(b)] for any $  z\in lev_{\leq \gamma_{y}} \Theta $, $z$ is d-stationary point of \eqref{form2-eq} if and only if it is a d-stationary point of \eqref{form2-l1pen}.
	\end{itemize}
\end{corollary}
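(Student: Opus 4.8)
The plan is to read \eqref{form2-eq} and \eqref{form2-l1pen} as concrete instances of \eqref{eq} and \eqref{l1pen} and then invoke the abstract machinery of Section~\ref{sec3}, with one genuine refinement needed only for the sharp penalty thresholds. First I would check the standing hypotheses. The loss $g(u)=\|r-y\|^{2}/6$ is a smooth quadratic; the maps $\{\psi_{2t-2},\psi_{6}\}$ are affine (bilinear in $(\theta,u)$) and hence smooth; and the activation $\sigma$ is directionally differentiable with $\|\sigma^{\prime}(u;d)\|\le\|d\|$ and, by piecewise linearity together with positive homogeneity at the kink $u=0$, satisfies $\sigma^{(2)}(u;d)\equiv 0$. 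Thus Assumptions~\ref{as1} and \ref{as2} hold and all functions are continuous, so Lemmas~\ref{lem:bd-opt-0} and \ref{lem:bd-opt-1} deliver nonemptiness and compactness of $\mathcal{S}_{0}^{RNN}$ and $\mathcal{S}_{1}^{RNN}$. Substituting the directional derivatives $\psi_{2t-2}^{\prime}=D_{W}s_{t-1}+Wd_{s_{t-1}}+D_{A}x_{t}+d_{b}$, the analogous $\psi_{6}^{\prime}$ with $V$, and $\psi_{2t-1}^{\prime}=\sigma^{\prime}(w_{t};d_{w_{t}})$, $\psi_{7}^{\prime}=\sigma^{\prime}(v;d_{v})$ into Theorem~\ref{th:T-express-2} yields exactly the stated form of $\mathcal{T}_{\mathcal{F}_{0}^{RNN}}(z)$.

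For the radial cone I would argue directly that, for $d\in\mathcal{T}_{\mathcal{F}_{0}^{RNN}}(z)$, feasibility of $z+\tau d$ for small $\tau>0$ splits into two separate requirements. The activation constraints $s_{t}=\sigma(w_{t})$ and $r_{t}=\sigma(v_{t})$ are automatically preserved, since $\sigma$ is locally linear away from $0$ and positively homogeneous at $0$, so the tangent identity $d_{s_{t}}=\sigma^{\prime}(w_{t};d_{w_{t}})$ already forces $\sigma(w_{t}+\tau d_{w_{t}})=s_{t}+\tau d_{s_{t}}$. The bilinear constraints, on the other hand, leave a quadratic remainder: substituting $\theta+\tau d_{\theta}$, $u+\tau d_{u}$ into $v_{t}=Vs_{t}+c$ produces $v_{t}+\tau d_{v_{t}}+\tau^{2}D_{V}d_{s_{t}}$, and similarly $w_{t}=Ws_{t-1}+Ax_{t}+b$ produces $\tau^{2}D_{W}d_{s_{t-1}}$. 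Exact feasibility along the ray forces these to vanish, giving precisely $D_{V}d_{s_{t}}=0$ and $D_{W}d_{s_{t-1}}=0$, which is the stated $P_{\mathcal{F}_{0}^{RNN}}(z)$. The two second-order d-stationarity statements are then immediate from Lemma~\ref{lem:1op}: since $F$ is smooth, $F^{\prime}(z;d)=\nabla F(z)^{\top}d$ and $F^{(2)}(z;d)=d^{\top}\nabla^{2}F(z)d$, so a local minimizer of \eqref{form2-eq} obeys the displayed inequalities over $\mathcal{T}_{\mathcal{F}_{0}^{RNN}}(z)$ and $P_{\mathcal{F}_{0}^{RNN}}(z)$; and for \eqref{form2-l1pen}, $\Theta$ is directionally and twice directionally differentiable (Propositions~\ref{dd_F1}, \ref{2dd_F1}) and locally Lipschitz over the unconstrained domain $\R^{\bar{N}}$, so Lemma~\ref{lem:1op} gives $\Theta^{\prime}(z;d)\ge 0$ for all $d$ and $\Theta^{(2)}(z;d)\ge 0$ whenever $\Theta^{\prime}(z;d)=0$.

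The delicate part is (a)--(b) under the thresholds \eqref{eq:br-threshold-rnn}, which are strictly weaker than the generic \eqref{eq:br-threshold-2}, so Theorem~\ref{th:P0=P1} cannot be applied verbatim. The main obstacle is to re-run the feasibility argument of Lemma~\ref{lem:D1-F0} and the reverse implication of Theorem~\ref{th:P0=P1}(b) while exploiting that $g$ depends on $u$ only through $r=u_{8}$. Concretely, I would replace the crude estimate $g^{\prime}(u;d_{u})\le K_{g}\|d_{u}\|$ by the exact $g^{\prime}(u;d_{u})=\nabla_{r}g^{\top}d_{r}$, whence $|g^{\prime}(u;d_{u})|\le K_{g}\|d_{r}\|$ with $K_{g}=\sup\|r-y\|/3$. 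On $lev_{\leq\gamma_{y}}\Theta$ the bounds $\|r-y\|\le\sqrt{6\gamma_{y}}$ and $\|W\|_{F},\|V\|_{F}\le\sqrt{\gamma_{y}/\lambda}$ give $K_{g}=\sqrt{2\gamma_{y}/3}$, which is already the $\beta_{2}$ threshold, since a perturbation confined to the $v,r$ layers propagates to $d_{r}$ through a single $1$-Lipschitz $\sigma$-stage. For the innermost layers, tracking a perturbation at $w_{1}$ forward to $d_{r}$ shows each $\sigma$-stage contributes a factor $1$ while each multiplication by $W$ or $V$ contributes $\sqrt{\gamma_{y}/\lambda}$; along the recurrence this telescopes into $\|d_{r}\|\le\gamma_{1}\sqrt{\gamma_{y}/\lambda}\,\|d_{w_{1}}\|$ with $\gamma_{1}=\sum_{i=0}^{2}(\sqrt{\gamma_{y}/\lambda})^{i}$, producing the sharp $\beta_{1}>K_{g}\gamma_{1}\sqrt{\gamma_{y}/\lambda}=\gamma_{1}\gamma_{y}\sqrt{2/(3\lambda)}$.

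With these refined thresholds, both the feasibility of any d-stationary point in $lev_{\leq\gamma_{y}}\Theta$ and the reverse implication go through exactly as in Lemma~\ref{lem:D1-F0} and Theorem~\ref{th:P0=P1}(b), the only change being that the norm $\|d_{u}\|$ (resp.\ $\|d_{u_{\ell}}-\bar{d}_{u_{\ell}}\|$ summed over $\ell$) is everywhere replaced by $\|d_{r}\|$ (resp.\ $\|d_{r}-\bar{d}_{r}\|$) and then bounded by the telescoped propagation factor. Since every global minimizer satisfies $\Theta(z)\le\Theta(z^{0})=\gamma_{y}$, the feasibility conclusion yields $\mathcal{S}_{1}^{RNN}\subseteq\mathcal{F}_{0}^{RNN}$ and hence $\mathcal{S}_{1}^{RNN}=\operatorname{argmin}_{z\in\mathcal{F}_{0}^{RNN}}F=\mathcal{S}_{0}^{RNN}$, giving (a); the two-sided d-stationarity equivalence on $lev_{\leq\gamma_{y}}\Theta$ gives (b). The part I expect to require the most care is verifying that the geometric accumulation along the three time steps is sharp—i.e.\ that the tightest $\beta_{1}$-constraint indeed occurs at the earliest layer and collapses to the single factor $\gamma_{1}$—rather than the looser product $\prod_{j}(1+K_{j-1})$ forced by the generic bound.
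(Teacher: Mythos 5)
Your proposal is correct and follows essentially the same route as the paper: reduce to the general results of Sections 2--3, and obtain the sharper thresholds by re-running the proofs of Lemma \ref{lem:D1-F0} and Theorem \ref{th:P0=P1} with the layer-specific Lipschitz estimates \eqref{eq:5-1}--\eqref{eq:5-5} (the $\sigma$-stages contributing factor $1$, the $W,V$-multiplications contributing $\sqrt{\gamma_{y}/\lambda}$, and $g$ depending on $u$ only through $r$), which telescopes into $\gamma_{1}$ exactly as you describe. Your direct ray-feasibility argument for $P_{\mathcal{F}_{0}^{RNN}}(z)$ (quadratic remainder $\tau^{2}D_{V}d_{s_{t}}$, $\tau^{2}D_{W}d_{s_{t-1}}$ must vanish, $\sigma$-constraints preserved by piecewise linearity) is the same computation the paper packages via \eqref{eq:3.3.1} and the definition of the radial cone.
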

\begin{proof}
	$(i)$.  The nonemptiness and compactness of $ \mathcal{S}_{0}^{RNN} $ and $ \mathcal{S}_{1}^{RNN} $ come from Lemmas \ref{lem:bd-opt-0} and \ref{lem:bd-opt-1}.
	
	$(ii)$.  Since $ g $ and $ \{ \psi_{\ell-1}, \ell \in [8] \} $ satisfy Assumptions \ref{as1} and \ref{as2}, we attain the necessity of second-order d-stationarity from Lemma \ref{lem:1op}, and the expression of $ \mathcal{T}_{\mathcal{F}_{0}^{RNN}}(z) $ from Theorem \ref{th:T-express-2}. The expression of $ P_{\mathcal{F}_{0}^{RNN}} (z) $ is further derived by its definition, $ P_{\mathcal{F}_{0}^{RNN}}(z) \subseteq \mathcal{T}_{\mathcal{F}_{0}^{RNN}} (z) $ and \eqref{eq:3.3.1}.
	
	$(iii)$.  The thresholds \eqref{eq:br-threshold-rnn} can be obtained by refining the proof of Lemma \ref{lem:D1-F0} and Theorem \ref{th:P0=P1} as follows. Note that
	\begin{align*}
		\|r-y\| \leq \sqrt{ 6 \gamma_{y} }, ~
		\|V\|\leq \sqrt{ \gamma_{y}/\lambda }, ~
		\|W\|\leq \sqrt{ \gamma_{y}/\lambda } , 
	\end{align*}
	for all $z \in lev_{\leq \gamma_{y}} \Theta $. Next, we will estimate the constants $ K_{g}>0 $ and $ \{ K_{\ell}>0, \ell \in [L-1] \} $ satisfying \eqref{eq:Lip}.
	By the definition of $ g(u)  $ in \eqref{form2-eq} and \eqref{form2-l1pen}, it implies that
	\begin{align}
		| g^{\prime}(u;  d_{u}  ) - g^{\prime}( u ;  \bar{d}_{u} )|
		=  | \frac{ (r-y)^{\top} ( d_{r}- \bar{d}_{r} ) }{3} |
		\leq\,& \frac{\| r-y \| }{3} \| d_{r}- \bar{d}_{r} \| \notag\\
		\leq \,& \sqrt{ \frac{2 \gamma_{y} }{3} } \| d_{r}- \bar{d}_{r} \| \label{eq:5-1}
	\end{align}
	for any $z \in lev_{\leq \gamma_{y}} \Theta $ and any $ d,\bar{d} \in \R^{\bar{N}} $.
	Then for $ \{ \psi_{\ell}, \ell \in [7] \} $, we divide them into four groups. For $ \{\psi_{2t-2}, t = 2,3 \} $, it follows from $ \|W\|\leq \sqrt{\gamma_{y}/\lambda} $ for all $ z \in lev_{\leq \gamma_{y} } \Theta $ that
	\begin{equation}
		\label{eq:5-2}
		\begin{aligned}
			& \| \psi_{2t-2}^{\prime}(\theta,\mathbf{u}_{2t-2}; d_{\theta},  d_{ \mathbf{u}_{2t-2} }   )  - \psi_{2t-2}^{\prime}(\theta,\mathbf{u}_{2t-2}; \bar{d}_{\theta}, \bar{d}_{ \mathbf{u}_{2t-2} }  )   \|  \\
			= \, &  \left\| W (d_{ s_{t-1}  } - \bar{d}_{ s_{t-1}  } )\right\| 
			\leq   \sqrt{\gamma_{y}/\lambda} \| d_{s_{t-1}}- \bar{d}_{s_{t-1}}  \|
		\end{aligned}
	\end{equation}
	for all $ d,\bar{d} \in \R^{\bar{N}} $ with $ d_{\theta} = \bar{d}_{\theta} $.
	For $ \{ \psi_{2t-1}, t \in [3] \} $, it follows from the definition of $ \sigma $ that
	\begin{equation}
		\label{eq:5-3}
		\begin{aligned}
			& \| \psi_{2t-1}^{\prime}(\theta,\mathbf{u}_{2t-1}; d_{\theta},  d_{ \mathbf{u}_{2t-1} }  )   - \psi_{2t-1}^{\prime}(\theta,\mathbf{u}_{2t-1}; \bar{d}_{\theta},  \bar{d}_{ \mathbf{u}_{2t-1} }  )   \| \\
			= \, &  \| \sigma^{\prime}( w_{t} ; d_{ w_{t} } ) - \sigma^{\prime}( w_{t} ; \bar{d}_{ w_{t} } ) \|
			\leq \| d_{ w_{t} } - \bar{d}_{ w_{t} } \|
		\end{aligned}
	\end{equation}
	for all $ d,\bar{d} \in \R^{\bar{N}} $.
	For $ \psi_{6} $, it follows from $ \|V\|\leq \sqrt{\gamma_{y}/\lambda} $ that for all $ z \in lev_{\leq \gamma_{y} } \Theta $ and for all $ d,\bar{d} \in \R^{\bar{N}} $ with $ d_{\theta} = \bar{d}_{\theta} $,
	\begin{equation}
		\label{eq:5-4}
		\begin{aligned}
			& \| \psi_{6}^{\prime}(\theta, \mathbf{u}_{6}; d_{\theta},   d_{ \mathbf{u}_{6} }    )  -  \psi_{6}^{\prime}(\theta, \mathbf{u}_{6} ; \bar{d}_{\theta},  \bar{d}_{ \mathbf{u}_{6} }   )   \|   \\
			= \, & \left\|
			\begin{pmatrix}
				V (d_{s_{1} } - \bar{d}_{s_{1} }) \\
				V (d_{s_{2} } - \bar{d}_{s_{2} }) \\
				V (d_{s_{3} } - \bar{d}_{s_{3} })
			\end{pmatrix}
			\right\|
			\leq    \sqrt{\gamma_{y}/\lambda} \sum_{ t\in [3]}\| d_{s_{t} } - \bar{d}_{s_{t} } \|.
		\end{aligned}
	\end{equation}
	For $ \psi_{7} $, it follows from the definition of $\sigma$ that
	\begin{align}
		& \| \psi_{7}^{\prime}(\theta,\mathbf{u}_{7}  ; d_{\theta},   d_{ \mathbf{u}_{7} } )   - \psi_{7}^{\prime}(\theta,  \mathbf{u}_{7} ; \bar{d}_{\theta},   \bar{d}_{ \mathbf{u}_{7} }  )   \|
		=  \| \sigma^{\prime}( v ; d_{ v } ) - \sigma^{\prime}( v ; \bar{d}_{ v } ) \|
		\leq \| d_{ v } - \bar{d}_{ v } \| \label{eq:5-5}
	\end{align}
	for all $ d,\bar{d} \in \R^{\bar{N}} $.
	Then we can yield (a) and (b) under the thresholds \eqref{eq:br-threshold-rnn} by replacing \eqref{eq:Lip} used in Lemma \ref{lem:D1-F0} and Theorem \ref{th:P0=P1} with \eqref{eq:5-1}-\eqref{eq:5-5} as follows.
	\begin{itemize}
		\item For Lemma \ref{lem:D1-F0}, prove $ u_{\ell}=\psi_{\ell-1}(\theta,\mathbf{u}_{\ell-1}) $ in the order of $ \ell=8,\dots,1 $ by contradiction, but without the use of induction \eqref{eq:4-4}, under same definitions of $\bar{z}$. During the process, plug \eqref{eq:5-1} with $ \bar{d}=\mathbf{0} $ into \eqref{eq:4-0.5} and \eqref{eq:4-1}; in \eqref{eq:4-3} and \eqref{eq:4-4+}, use
		\begin{itemize}
			\item \eqref{eq:5-2} with $ d_{\theta}=\mathbf{0}, \bar{d}=\mathbf{0} $,  $t=2,3,$
			\item \eqref{eq:5-3} with $ d_{\theta}=\mathbf{0}, \bar{d}=\mathbf{0} $,  $t=1,2,3$,
			\item \eqref{eq:5-4} with $ d_{\theta}=\mathbf{0}, \bar{d}=\mathbf{0} $,
			\item \eqref{eq:5-5} with $ d_{\theta}=\mathbf{0}, \bar{d}=\mathbf{0} $.
		\end{itemize}
		\item For Theorem \ref{th:P0=P1}, keep the analysis before \eqref{eq:4-6} unchanged except for plugging \eqref{eq:5-1} into \eqref{eq:4-4.5}. Then repeat \eqref{eq:4-6.5} for $ \ell=2,\dots,8 $ instead of using the induction \eqref{eq:4-6}. During the process, we use \eqref{eq:5-2}, \eqref{eq:5-3}, \eqref{eq:5-4} and \eqref{eq:5-5} when the subscript of $\psi$ belongs to $ \{2,4\} $, $ \{1,3,5\} $, $ \{ 6 \} $ and $ \{ 7 \} $  in \eqref{eq:4-6.5} respectively. The calculations after \eqref{eq:4-6.5} are also kept without changes. 
	\end{itemize} 
\end{proof}

The results in Corollary \ref{cor:rnn} can be easily extended to the case where $ N> 1$, $T>3$ and $ s_{t} ,w_{t} ,r_{t} ,v_{t}  $ aggregate corresponding components for all samples at $t$th time step with thresholds
\begin{align}\label{eq:br-threshold-rnn-2}
	\beta_{1} >    \gamma_{1} \gamma_{y}  \sqrt{ 2/(\lambda N T) }  ,\qquad \beta_{2} >  \sqrt{ 2 \gamma_{y} / (NT) } ,
\end{align}
under $ \gamma_{y}:=\Theta(\mathbf{0})={\|y\|^{2}}/{(2NT)} $, $\gamma_{1}:=  \sum_{i=0}^{T-1} (\sqrt{\gamma_{y}/\lambda})^{i}   $.
And the exact penalty in d-stationarity can be generalized to more scenarios including but not limited to more complicated variants in RNNs (such as LSTM and GRU) with any locally Lipschitz continuous and directionally differentiable activator (such as $\tanh$ and ELU). 

Besides, it follows from the convexity of $F$ in \eqref{form2-eq} and \eqref{form2-l1pen} that for all $ z\in \mathcal{F}_{0}^{RNN} $,
$
d^{\top} \nabla^{2} F(z) d \geq 0
$ for all $  d \in P_{\mathcal{F}_{0}^{RNN}}(z) \cap \{ d \mid [\nabla F(z)]^{\top} d =0 \}.
$
Hence, every d-stationary point of \eqref{form2-eq} is a second-order d-stationary point for \eqref{form2-eq}.
Similarly, according to \eqref{eq:4-8-1}, every d-stationary point of \eqref{form2-l1pen} in $ lev_{\leq \gamma_{y}} \Theta $ is a second-order d-stationary point for \eqref{form2-l1pen} under \eqref{eq:br-threshold-rnn-2}.
In fact, it follows from Remark \ref{remark:SD0=SD1} that $ \mathcal{SD}_{0} \cap lev_{\leq \gamma_{y} } \Theta = \mathcal{SD}_{1} \cap lev_{\leq \gamma_{y} } \Theta =\mathcal{D}_{0} \cap lev_{\leq \gamma_{y} } \Theta = \mathcal{D}_{1} \cap lev_{\leq \gamma_{y} } \Theta $ in this case.
As a consequence, one can obtain a second-order d-stationary point of \eqref{form2-eq} and \eqref{form2-l1pen} by applying the algorithms in \cite{DNN-CHP,nMM-CPS} on \eqref{form2-l1pen} with \eqref{eq:br-threshold-rnn-2}.

\section{Conclusions}
\label{sec5}
The paper investigates a class of nonconvex nonsmooth multicomposite optimization problems \eqref{eq:1.1} with an objective function comprised of a regularization term and a multi-layer composite function with twice directionally differentiable and locally Lipschitz continuous components. The d-stationarity of \eqref{eq:1.1} is hard to attain directly, and its second-order d-stationarity is vague without additional assumptions on the objective function. 
Based on the closed-form expression of the tangent cone  $ \mathcal{T}_{\mathcal{F}_{0}}(\cdot) $, we prove the equivalence between \eqref{eq:1.1}, the constrained form \eqref{eq} and the $\ell_{1}$-penalty formulation \eqref{l1pen} in terms of global optimality and d-stationarity.
The equivalence offers an indirect way to compute the d-stationary points of \eqref{eq:1.1}.
Furthermore, it provides second-order necessary and sufficient conditions for \eqref{eq:1.1} through \eqref{eq} and \eqref{l1pen}.
The theoretical results are also applied to the training process    of recurrent neural networks.

\section*{Acknowledgments}
We would like to thank two referees for their constructive and helpful comments.

\bibliographystyle{abbrv}
\bibliography{references}

\end{document}